\title{Complex hyperbolic representations of $SL_2(\R)$ }
\author{Gonzalo Emiliano  Ruiz Stolowicz\thanks{EGG, Institute of Mathematics, EPFL, gonzalo.ruizstolowicz@epfl.ch}}
\numberwithin{equation}{section}
\newtheorem{teo}{Theorem}[subsection]
\newtheorem{lem}[teo]{Lemma}
\newtheorem{prop}[teo]{Proposition}
\newtheorem{cor}[teo]{Corollary}
\theoremstyle{definition}
\newcommand{\1}{\mathds 1}
\newcommand{\N}{\mathbf N}
\newcommand{\Z}{\mathbf Z}
\newcommand{\R}{\mathbf R}
\newcommand{\C}{\mathbf C}
\newcommand{\F}{\mathbf F}
\newcommand{\hi}{{\mathbf H}^\infty}
\newcommand{\isocuno}{\text{Isom}(\mathbf H_{\mathbf C}^1)_o}
\newcommand{\iso}{\text{Isom}}
\newcommand{\Arg}{{\text{Arg}}}
\newcommand{\Cart}{\text{Cart}}
\begin{document}
	
	\maketitle
	
	\tableofcontents
	
	\begin{abstract}
We construct a two-parameter family of irreducible representations of $SL_2(\R)$ on the infinite-dimensional complex hyperbolic space. To this end, we introduce the notion of \emph{horospherical combination} of two representations. Our family then appears as horospherical combinations of two known one-parameter families.
	\end{abstract}
	\section*{Introduction}
\subsection{Context}
Representations of one semi-simple Lie group into another have no mysteries: by the Karpelevich--Mostow theorem (see \cite{Mostowmostowkarpelevich} or  for a proof in the hyperbolic case see \cite{boubelkarpelevichmost}), they are all ``standard'' in the sense that they correspond to totally geodesic or trivial embedding of the corresponding symmetric spaces of the simple factors. 

\smallskip

The situation changes dramatically when allowing \emph{infinite-dimensional} hyperbolic spaces. The irreducible representations of  $\iso(\mathbf{H}^n_\R)$ into $\iso(\mathbf{H}^\infty_\R)$ have been classified by Monod $\&$ Py in~\cite{monod2014exotic} and it turns out that they form an exotic one-parameter deformation family. The same holds for self-representations of $\iso(\mathbf{H}^\infty_\R)$ (see Monod $\&$ Py  \cite{monodpyselfrepresentations}). Over $\C$, an analogous exotic family has been constructed by Monod in~\cite{monod2018notes} although a complete classification is not yet known.

\bigskip

The present paper focuses on the special case where real and complex hyperbolic spaces meet, namely the Lie group $PSL_2(\R)$. Indeed, this group can be viewed as the connected component of either $\iso(\mathbf{H}^2_\R)$ or $\iso(\mathbf{H}^1_\C)$. This might seem innoccuous since the spaces $\mathbf{H}^2_\R$ and $\mathbf{H}^1_\C$ are isometric (after rescaling in our conventions). However, the the real and complex viewpoint do give us two completely different families of irreducible representations on the infinite-dimensional complex hyperbolic space $\mathbf{H}^\infty_\C$:

On the  one hand from the representations $\iso(\mathbf{H}^2_\R) \to \iso(\mathbf{H}^\infty_\R)$ studied in~\cite{pydelzant} and~\cite{monod2014exotic}, we obtain a family of representations into $\iso(\mathbf{H}^\infty_\C)$ by complexification. Although they are irreducible (in the complex sense), they preserve a totally real subspace and hence have vanishing Cartan invariant.

On the other hand, the representations $\iso(\mathbf{H}^1_\C) \to \iso(\mathbf{H}^\infty_\C)$ constructed in~\cite{monod2018notes} have a non-trivial Cartan invariant, which actually parametrizes that family.

\smallskip

The main result of this paper is an interpolation between these two families of representations of the group $PSL_2(\R) \cong \iso(\mathbf{H}^2_\R)_o \cong \iso(\mathbf{H}^1_\C)_o$. We thus obtain a two-parameter family of representations of $PSL_2(\R)$ into $\iso(\mathbf{H}^\infty_\C)$ which, to the author's knowledge, have not been described before.

\subsection{Statements}
Given an irreducible  representation $\rho$ of $\iso(\mathbf{H}^n_\C)$ (with $1\leq n \leq \infty$) into $\iso(\mathbf{H}^\infty_\C)$, in  \cite{monod2018notes} Monod  has introduced two invariants (with a different notation than here). The first invariant, $\ell(\rho)$ is a \emph{length} invariant already considered for the real case in~\cite{monod2014exotic}. The definition of $\ell(\rho)$ hinges on the fact that for any hyperbolic isometry $g\in \iso(\mathbf{H}^n_\C)$ with displacement length $\ell(g)$, the isometry $\rho(g)$ will also be hyperbolic and its displacement length $\ell(\rho(g))$ is proved to be asymptotically proportional to $\ell(g)$ as $\ell(g) \to\infty$, with a proportionality factor not depending on $g$. This factor is by definition $\ell(\rho)$.

Whilst this first invariant makes sense over the reals as well, the second invariant considered by Monod in \cite{monod2018notes} is specific to the complex case. Recall that the second cohomology of $\iso(\mathbf{H}^n_\C)$ is generated by the K\"ahler class, which can be geometrically realized by the Cartan invariant of triples of points in $\mathbf{H}^n_\C$. Since this class generates the cohomology, the pull-back through $\rho$ of the Cartan invariant in $\mathbf{H}^\infty_\C$ must be some multiple of the Cartan invariant in $\mathbf{H}^n_\C$. For $n=1$, this multiple times $\pi/2$ is by definition the second invariant, the \emph{Cartan argument of $\rho$} denoted $\Arg(\rho)$.

In this language, the first family of representations $PSL_2(\R) \to\iso(\mathbf{H}^\infty_\C)$ from~\cite{monod2014exotic} is parametrized by $0<\ell(\rho)<2$ and satisfies $\Arg(\rho)=0$. The second family, from~\cite{monod2018notes}, is parametrized by $0<\ell(\rho)<1$ but satisfies $\Arg(\rho)=\frac{\pi}{2}\ell(\rho)$. We caution the reader that there is a scaling convention to be taken into account since $\ell$ depends on the metric chosen on $\mathbf{H}^2_\R$ respectively $\mathbf{H}^1_\C$. 

We now have the necessary language to describe our two-parameter family:

\begin{teo}
	For every $0<t<1$ and $0\leq r \leq \frac{t\pi}{2}$, there  exists an irreducible representation $\rho_{t,r}\colon PSL_2(\R) \to \iso(\mathbf{H}^\infty_\C)$ such that $\Arg(\rho_{t,r})=r$ and $\ell(\rho_{t,r})=t$.
	
	Moreover, this representation is unique up to conjugation.
\end{teo}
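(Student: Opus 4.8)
The plan is to realize every $\rho_{t,r}$ as a single horospherical combination of one member of each of the two known families, to read off its invariants from the additivity of the combination, and then to establish irreducibility and rigidity. Fixing once and for all the common $\C$-normalization of $\ell$ alluded to in the scaling remark above, for given $0<t<1$ and $0\le r\le\frac{t\pi}{2}$ I set $b=\frac{2r}{\pi}$ and $a=t-b$, so that $a,b\in[0,1)$ and $a+b=t$; the hypothesis $r\le\frac{t\pi}{2}$ is exactly what forces $a\ge0$, and $t<1$ is exactly what keeps $b=\frac{2r}{\pi}\le t$ inside the admissible interval $(0,1)$ of the second family. Let $\sigma_a$ be the member of the first (complexified real) family with $\ell(\sigma_a)=a$, $\Arg(\sigma_a)=0$, and let $\tau_b$ be the member of the second family with $\ell(\tau_b)=b$, so that $\Arg(\tau_b)=\frac{\pi}{2}b=r$. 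I then define $\rho_{t,r}:=\sigma_a\boxplus\tau_b$, the horospherical combination of the two, with the convention that the degenerate endpoints $a=0$ and $b=0$ are read as the pure second, respectively first, family; by construction this is again a representation into $\iso(\mathbf{H}^\infty_\C)$.

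To compute the invariants I would invoke the two additivity properties of the horospherical combination, $\ell(\sigma\boxplus\tau)=\ell(\sigma)+\ell(\tau)$ and $\Arg(\sigma\boxplus\tau)=\Arg(\sigma)+\Arg(\tau)$; granting these, $\ell(\rho_{t,r})=a+b=t$ and $\Arg(\rho_{t,r})=0+r=r$. The Cartan argument is additive essentially at the cochain level, since it is the pull-back of the K\"ahler class and the combination superposes the two boundary cocycles; the length, by contrast, is the asymptotic proportionality factor of $\ell(\rho(g))$ as $\ell(g)\to\infty$ for hyperbolic $g$, and is the more delicate of the two. I would establish the length additivity by evaluating the displacement of $\rho_{t,r}(a_s)$ along the diagonal one-parameter subgroup $a_s$ and tracking the exponential growth contributed by each factor, the horospherical alignment being precisely what makes the two contributions add rather than take a maximum.

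Next I would prove that $\rho_{t,r}$ is irreducible, i.e.\ admits no invariant proper totally geodesic subspace and no global fixed point in $\mathbf{H}^\infty_\C$. It suffices to show that the equivariant boundary map $\varphi\colon S^1\to\partial\mathbf{H}^\infty_\C$ has image spanning a dense subspace. I would deduce this from irreducibility of the two factors together with disjointness of their linear parts: $\sigma_a$ preserves a totally real subspace while $\tau_b$ does not, and even in the coincidence $a=b$ this real structure still distinguishes them, so the two linear parts share no subrepresentation. Consequently any invariant subspace would have to split along the two superposed cocycles, which a genuine interpolation $0<a,b<t$ does not permit; the two degenerate endpoints are irreducible by the cited results.

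The main obstacle is uniqueness, that is, the rigidity statement that $(\ell,\Arg)$ is a complete conjugacy invariant in this range. Here I would use that $PSL_2(\R)$ acts transitively on ordered pairs, and simply transitively on positively oriented triples, of distinct points of $S^1=\partial\mathbf{H}^2_\R$. Given any irreducible $\rho$ with $\ell(\rho)=t$ and $\Arg(\rho)=r$, equivariance forces the two-point boundary kernel attached to $\rho$ to be a M\"obius conformal density whose homogeneity degree is pinned down by $\ell(\rho)=t$, while the induced three-point phase is, by simple transitivity on oriented triples, the constant $\Arg(\rho)=r$; thus both the modulus and the phase of the boundary data are fixed by $(t,r)$, and a GNS reconstruction identifies $\rho$ with $\rho_{t,r}$ up to conjugation. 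The crux is to rule out any further continuous modulus in the boundary kernel beyond this degree and phase, so that $(\ell,\Arg)$ genuinely exhausts the conjugacy freedom; this is where transitivity on pairs and on oriented triples, together with the normalization supplied by the two invariants, has to be used in full.
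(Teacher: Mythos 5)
Your construction rests on two properties of the horospherical combination that it does not have, and this breaks the existence part of your argument. The combination $\rho_1\underset{u}{\wedge}\rho_2$ is only defined when $\ell(\rho_1)=\ell(\rho_2)=t$, and it \emph{preserves} that common length rather than adding lengths; likewise its Cartan argument is not $\Arg(\rho_1)+\Arg(\rho_2)$ but an interpolation, namely $\Arg\bigl(K_1(1)+K_2(1)\bigr)$, which sweeps out the interval between $\Arg(\rho_1)$ and $\Arg(\rho_2)$ as one rescales the moduli $|K_i(1)|$ by conjugation (equivalently, $(1-u)\Arg(\rho_1)+u\Arg(\rho_2)$). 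The reason equal lengths are forced is structural: the combination glues the cocycles as $c(b)=c_1(b)\oplus c_2(b)$ over a \emph{single} shared pair $\eta_1,\eta_2$, with one diagonal scaling $\rho(\lambda,0)$; the homomorphism property requires the homogeneity $\pi(\lambda,0)c(b)=\lambda^{-t}c(\lambda^2 b)$ to hold for the glued $c$, and if the factors scaled with different exponents $a\neq b$ there is no consistent exponent and the verification (the analogue of the computations behind \cref{bylambdasonhomomorfismos}) fails. So your ansatz $\rho_{t,r}=\sigma_a\boxplus\tau_b$ with $a=t-\tfrac{2r}{\pi}$, $b=\tfrac{2r}{\pi}$ is not a representation at all for $a\neq b$. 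The correct construction takes \emph{both} factors at length $t$: the complexified real representation $\rho_t$ with $\Arg=0$ and the representation $\tau_t$ from the complex family with $\Arg=\tfrac{t\pi}{2}$ (this is exactly where $t<1$ is needed, so that $\tau_t$ exists), and forms $\rho_t\underset{u}{\wedge}\tau_t$; varying $u$ (i.e.\ the relative sizes of $K_1(1)$ and $K_2(1)$) yields every $r\in[0,\tfrac{t\pi}{2}]$ at fixed length $t$.

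Your uniqueness sketch also does not go through as stated. The claim that ``by simple transitivity on oriented triples'' the three-point phase of an irreducible $\rho$ must be the constant $r$ is false: the pulled-back Cartan cocycle $\alpha(g_1,g_2,g_3)=\Cart\bigl(\rho(g_1)x,\rho(g_2)x,\rho(g_3)x\bigr)$ is alternating, hence vanishes on degenerate triples and is certainly not constant; the cohomological statement that the pull-back of the K\"ahler \emph{class} is $r$ times the class does not determine the kernel itself. The actual rigidity argument runs through the parabolic subgroup: first one shows an irreducible representation is determined by its restriction to $P$ (the image of the Weyl-type element $\sigma$ is forced by the formula $Ac(b)=\nu K(b)c(-1/b)$ of \cref{formulaparaA}); then one shows the kernel of hyperbolic type of $\rho|_P$ at the base point $\tfrac{1}{\sqrt 2}(\eta_1+\eta_2)$ is reconstructible from the single complex number $K(1)$ together with $t$ (\cref{gelfandparaelgrupoparabolico}), so that after conjugating to match $|K(1)|$, two irreducible representations with the same $(t,\Arg)$ have identical $P$-kernels and hence, by the GNS uniqueness of \cref{gns}, are conjugate. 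Some argument of this type, pinning down the full kernel and not just its cohomology class and growth degree, is what your proof is missing.
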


Our construction is a form of \emph{interpolation} between the families constructed in \cite{monod2018notes} and \cite{monod2014exotic}. In fact, we introduce an operation that we call the \emph{horospherical combination} of two given representations $\rho_1, \rho_2$ that have the same length invariant $\ell(\rho_1) = \ell(\rho_2)$. We denote the resulting representation by $\rho_1\underset{u}{\wedge} \rho_2$, with $0\leq u\leq1$. 

As the name is intended to suggest, this operation resembles a convex combination, but only of the ``horospherical part'' of the representations. For this to make sense, we need to assume that the length invariants coincide. As for the Cartan argument of $\rho_1\underset{u}{\wedge} \rho_2$, it satisfies $$\Arg(\rho\underset{u}{\wedge}\tau)= (1-u)\Arg(\rho)+u\Arg(\tau).$$

	\section{Preliminaries}	
	The  results  described in this section are well known, they are presented here in order to make explicit all the conventions and definitions that will be used.  In the subsections 1.1 and 1.2,  following \cite{burger2005equivariant} and \cite{das2017geometry}, the different hyperbolic spaces are defined and some results about their groups of isometries that will be useful are presented. 
	
	 In section 1.3, following the ideas of \cite{monod2014exotic},   general properties of some complex hyperbolic representations of $SL_2(\R)$ are studied and the notation used in the rest of the article is defined. 
	\subsection{The hyperbolic spaces and their isometries}
	
	Following Burger, Iozzi $\&$ Monod \cite{burger2005equivariant},  
		let $H$ be a separable   Hilbert space over $\mathbf{F}=\R,\C$, with $\dim_\mathbf{F}(H)\in\N_{\geq2}\cup\{\infty\}$,  endowed with a  non-degenerate  form $Q$, linear in the first argument and antilinear in the second. 
	
	Define $$\iota(Q)=\sup\{\dim_\mathbf{F}(W)\mid W\leq H \,\text{and}\, Q|_{W\times W}=0\}$$ 
	and $$\iota_+{Q}=\sup\{\dim_\mathbf{F}(W)\mid W\leq H \,\text{and} \,Q|_{W\times W} \,\text{is positive definite}\}.$$ 
	Suppose that $\iota(Q)=\iota_+(Q)=1.$ A form with these properties will be called a  \textit{form of signature} $(1,n)$, where $n=\dim_\F(H)-1.$
	
	For every $v\in H$, denote $[v]=\mathbf{F} v$. If $dim_\mathbf{F}(H)=m-1$, where $m\geq3$ if $\F=\R$ and $m\geq2$ if $\F=\C$,   define $$\mathbf{H}_\mathbf{F}^m=\{[v]\mid B(v,v)>0\}.$$ 
	The space $\mathbf{H}_\mathbf{F}^m$ is equipped with a metric given by the formula $$cosh(d([v],[w]))=\frac{|B(v,w)|}{B(v,v)^{1/2}B(w,w)^{1/2}}. $$
	
	A $\pm$\textit{-orthogonal decomposition} of $H$ is a $B$-orthogonal decomposition  $$H=W_+\oplus W_-,$$ with $B|_{W_\pm\times W_\pm}$ positive/negative definite.  Given a $\pm$-orthogonal decomposition of $H$, define the sesquilinear form $B_\pm$ as  $B_\pm|_{W_+\times W_+}=B$,  $B_\pm|_{W_-\times W_-}=-B$ and $B(W_+,W_-)=0$. 
	A form of signature $(1,\infty )$ on  $H$  is called \textit{strongly non-degenerate} if for every (any)  $\pm$-orthogonal decomposition,   the space $(H,B_\pm)$ is a Hilbert space (see Lemma 2.4 of \cite{burger2005equivariant}). 
	
	The metric space $( \mathbf{H}_\mathbf{F}^m,d)$ is complete if, and only if, $B$ is strongly non-degenerate (see Proposition 3.3 in \cite{burger2005equivariant}). From now on the space $\mathbf{H}_\mathbf{F}^m$ will be always considered associated to a strongly non-degenerate sesquilinear form and it will be called the $m$-\textit{dimensional}  $\mathbf{F}$-\textit{hyperbolic space} (see Proposition 3.7 of \cite{burger2005equivariant}). 
	For  further reading on these spaces see  \cite{burger2005equivariant},  \cite{das2017geometry} for the infinite-dimensional case,   and  \cite{complexhyperbolic} for the  finite-dimensional complex one. From now on $H$ will denote a separable Hilbert space over $\F$ provided with $B$, a strongly non-degenerate sesquilinear form  from of signature $(1,m)$. 
	
	If $\F=\C$, let $\mathbf{K}=\R,\C$ and if $\F=\R$ define $\mathbf{K}=\R.$ Denote $\pi$ the projectivization map of $H\rightarrow \mathbf{P}(H)$. A $\mathbf{K}-$\textit{hyperbolic subspace} of $\mathbf{H}_\F^m$ is the image under $\pi$ of a closed $\mathbf{K}$-vector subspace $L$ of $H$ such that $B|_{L\times L}$ is non-degenerate of signature $(1,m')$. The restriction of $B$ to $L$ is strongly non-degenerate (see Proposition 2.8 of \cite{burger2005equivariant}), therefore $\pi(L)$ is a (complete) hyperbolic space.  In the complex finite-dimensional case, this is a characterization for totally geodesic subspaces (see 3.1.11 of \cite{complexhyperbolic}). 
	
	For every finite set of points $X$ of $\hi_\mathbf{F}$  there is $W\subset H$, a finite-dimensional space over  $\F$,  that contains representatives of each of  the elements of $X$. The restriction of $B$ to $W$ is a non-degenerate form of signature $(1,n)$, therefore $\pi(W)$ is isometric to a finite-dimensional  $\mathbf{F}$-hyperbolic space. This shows that many statements about  finite sets of points in $\hi_\F$ can be reduced to a finite-dimensional question. 
	For example, the space    $\mathbf{H}_\mathbf{F}^\infty$ is a   geodesically complete  CAT(-1) space because  this is true for every  finite dimensional  $\mathbf{H}_\F^m$ (see Proposition II.10.10 of \cite{bridson2013metric}). 
	
	Every  geodesic ray in $\hi_{\mathbf{F}}$ lies inside a finite dimensional $\mathbf{F}$-hyperbolic space. It is not surprising that $\partial\hi_\F$,   the visual boundary of $\hi_\mathbf{F}$,  is in a natural bijection with the set of isotropic vectors of $H$, because this is true at a finite-dimensional level (see Proposition 3.5.3 in \cite{das2017geometry}). 
	
	Observe that the space $$\{[v]\in\mathbf{P}(H)\mid B(v,v)\geq0\}$$ can be provided with the subspace topology of the projective space (with the quotient topology) associated to $H$. The hyperbolic spaces are Gromov hyperbolic, therefore $\mathbf{H}_\F^m\cup\partial\mathbf{H}_\F^m$  has a natural topology. In this case both topologies are the same and coincide in $\mathbf{H}_{\mathbf{F}}^m$ with the metric topology (see Proposition 3.5.3 of \cite{das2017geometry}). 
	
	Given a CAT(0) space $X$ and $\sigma $ a geodesic ray representing $\xi\in\partial X$. For every $y\in X$, the limit 
	$$b_{\xi,\sigma(0)}(y)=\lim\limits_{t\to\infty }d(y,\sigma(t))-t$$ exists and defines a continuous and convex function (see Lemma II.8.18 of \cite{bridson2013metric}), 
	$$ X\xrightarrow{b_{\xi,\sigma(0)}}\R $$ called the \textit{Busemann function associated to} $\xi$ \textit{and normalized in} $\sigma(0)$.  
	
If $\sigma$ and $\tau$ are two asymptotic geodesic rays, there exists a constant $C$ such that 
	$b_{\xi,\sigma(0)}-b_{\xi,\tau(0)}=C$ (see Corollary II.8.20 of \cite{bridson2013metric}) . Thus, for every  $\xi\in\partial X$, all the Busemann functions associated to $\xi $ have the same level (resp. sublevel) sets. This subsets are called \textit{horospheres} (resp. \textit{horoballs}) centered at $\xi.$ 
	
	For $\mathbf{H}^m_\F$ there is an explicit description of the Busemann functions. If  $x\in \mathbf{H}^m_\F$ it can be shown at a finite-dimensional level that  every geodesic ray $\sigma$ issuing from $x$ admits a lift to $H$ of the shape $t\mapsto cosh(t)\tilde{x}+sinh(t) u$, where $\tilde{x}$ is a lift of $x$, $B(\tilde{x},\tilde{x})=1=-B(u,u)$  and $B(\tilde{x},u)=0$. 
	If  $y\in \mathbf{H}^m_\F$ and $\tilde{y}$ is a lift of $y$ such that $B(\tilde{y},\tilde{y})=1,$
	then 
	$$\begin{array}{rcl}
		b_{\xi,\sigma(0)}(y)&=&\lim\limits_{t\to\infty}d(y,\sigma(t))-t\\
		&=&\lim\limits_{t\to\infty} arccosh\big(|B(\tilde{y}, cosh(t)\tilde{x}+sinh(t) u)|  \big)-t \\
		&=&\ln(|B(\tilde{y},\tilde{x}+u)|). 
	\end{array}$$
	Observe that   $\xi$ is represented by the isotropic vector $\tilde{x}+u$ (see Proposition 3.5.3 of \cite{das2017geometry}). 
	Let $g$ be an isometry of a metric space $X$. Let  $d_g:X\rightarrow\R$ be the function given by $d_g(x)=d(gx,x)$. The \textit{displacement} of $g$ is defined as $$\ell(g)=\inf_{x\in X}\{d_g(x)\}.$$ 
	The following proposition is well known, here just a sketch of the proof is presented. 
	\begin{prop}\label{tricotomia}
		For  every isometry $g$ of a complete CAT(-1) space $X$, there is a trichotomy:
		\begin{enumerate}
			\item The map $g$ is of elliptic type:  $g$ fixes a point in $X$.
			\item The map $g$ is of hyperbolic type: $g$ preserves a geodesic and it does not fix any point in $X$.
			\item The map $g$ is of parabolic type: $g$ fixes a unique point $\xi\in\partial X$,  it does not fix any point in $X$, it  leaves invariant all the horospheres centered at $\xi$ and $\ell(g)=0$.
		
		\end{enumerate}
	\end{prop}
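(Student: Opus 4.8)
The plan is to hang the whole trichotomy on a single dichotomy for the displacement function $d_g$: whether or not the infimum $\ell(g)=\inf_x d_g(x)$ is attained. Two preliminary facts drive everything. First, $d_g$ is convex and continuous: for a geodesic $c$ from $x$ to $y$ the map $t\mapsto d(c(t),gc(t))$ is the distance between the two geodesics $c$ and $g\circ c$, which is convex in a CAT(0) space; the same computation shows $d_g\circ g=d_g$, so every sublevel set of $d_g$ is closed, convex and $g$-invariant. Second, a complete CAT(-1) space is uniquely geodesic, so equality in the triangle inequality forces colinearity — the tool I will use to manufacture axes. I would also note at the outset that ``attained versus not attained'' and, within the attained case, ``$\ell(g)=0$ versus $\ell(g)>0$'' partition all isometries, so the three cases are mutually exclusive.

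Suppose first that the infimum is attained, on the closed convex $g$-invariant set $\operatorname{Min}(g)=\{x:d_g(x)=\ell(g)\}$. If $\ell(g)=0$, any $x_0\in\operatorname{Min}(g)$ is fixed and $g$ is elliptic. If $\ell(g)>0$, I would build an axis: pick $x_0\in\operatorname{Min}(g)$ and let $m$ be the midpoint of $[x_0,gx_0]$. Convexity of $\operatorname{Min}(g)$ gives $m\in\operatorname{Min}(g)$, so $d(m,gm)=\ell(g)$; since $gm$ is the midpoint of $[gx_0,g^2x_0]$, one checks $d(m,gm)=d(m,gx_0)+d(gx_0,gm)=\tfrac{\ell(g)}2+\tfrac{\ell(g)}2$, and unique geodesics then force $gx_0\in[m,gm]$. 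Hence $x_0,gx_0,g^2x_0$ are colinear, and concatenating the $g$-translates of $[x_0,gx_0]$ yields a geodesic line preserved by $g$, on which $g$ acts as translation by $\ell(g)>0$. In particular $g$ has no fixed point, so it is hyperbolic.

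Now suppose the infimum is not attained; I claim $g$ is parabolic. Consider the nested $g$-invariant closed convex sublevel sets $C_\epsilon=\{x:d_g(x)\le\ell(g)+\epsilon\}$. Because a decreasing family of nonempty bounded closed convex subsets of a complete CAT(0) space has nonempty intersection, while $\bigcap_\epsilon C_\epsilon=\varnothing$, every ball eventually misses $C_\epsilon$; thus the nearest-point projections $p_n$ of a fixed basepoint $o$ onto $C_{1/n}$ satisfy $d(o,p_n)\to\infty$, and nestedness together with convexity forces the directions of $[o,p_n]$ to converge, defining a point $\xi\in\partial X$. Since $d(p_n,gp_n)\le d_g(p_n)\to\ell(g)$ stays bounded while $d(o,p_n)\to\infty$, the sequence $gp_n$ converges to the same boundary point, so $g\xi=\xi$. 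Uniqueness follows from the attained case: were $\eta\neq\xi$ a second fixed boundary point, $g$ would preserve the (unique, by CAT(-1)) geodesic from $\xi$ to $\eta$ and act there as a translation or the identity, so $d_g$ would attain its infimum on that line, a contradiction; the same reasoning shows $g$ fixes no interior point.

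The main obstacle is the last part: proving $\ell(g)=0$ and that $g$ preserves every horosphere centered at $\xi$. This is exactly where strict negative curvature is indispensable, both statements being false for general CAT(0) (on $\mathbf{H}^2_\R\times\R$ the product of a parabolic with a nonzero translation is parabolic yet has positive displacement). Since $g$ fixes $\xi$, Corollary II.8.20 of \cite{bridson2013metric} yields a Busemann cocycle: for a ray $\sigma$ to $\xi$ one has $b_{\xi,\sigma(0)}\circ g=b_{\xi,\sigma(0)}+\beta$ for a constant $\beta$, and $g$ preserves the horospheres centered at $\xi$ precisely when $\beta=0$. A nonzero $\beta$ would push $\sigma$ off its own horospherical level and let one construct a $g$-invariant geodesic ending at $\xi$, returning us to the attained (hyperbolic) case; as the infimum is not attained, I conclude $\beta=0$. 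With $\beta=0$ the rays $\sigma$ and $g\circ\sigma$ are asymptotic to $\xi$ at the same Busemann level, and in a CAT(-1) space two such rays satisfy $d(\sigma(t),g\sigma(t))\to0$ as $t\to\infty$, whence $\ell(g)\le\lim_{t\to\infty}d_g(\sigma(t))=0$. Quantifying this exponential contraction from the CAT(-1) comparison and ruling out a nonzero cocycle is the technical heart of the proof; by contrast, the elliptic/hyperbolic dichotomy used only the CAT(0) structure.
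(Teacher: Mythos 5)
Your treatment of the attained case (midpoint construction of the axis, exclusivity of the three cases, uniqueness of the fixed boundary point via the geodesic joining two fixed ends) is correct and close to the paper, which simply quotes Theorem II.6.8 of \cite{bridson2013metric} for the axis. The first genuine gap is where you produce the fixed point $\xi\in\partial X$ in the non-attained case: you assert that ``nestedness together with convexity forces the directions of $[o,p_n]$ to converge''. That is a purely CAT(0) claim, and it is false in infinite-dimensional CAT(0) spaces. In $\ell^2$, take $C_n=\{x : x_k\ge 1 \text{ for } k\le n\}$: these are nested, closed, convex, with empty intersection, the projections of the origin are $p_n=e_1+\cdots+e_n$, and the directions $p_n/\sqrt{n}$ do not converge (the angle between the $n$-th and $2n$-th is $\pi/4$ for every $n$). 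Worse, the conclusion itself fails in complete CAT(0) spaces: for a unitary $U$ of $\ell^2$ with no fixed unit vector and $b\in\overline{\mathrm{ran}(I-U)}\setminus\mathrm{ran}(I-U)$, the isometry $x\mapsto Ux+b$ fixes no point of the space and no point of its boundary. So strict negative curvature is indispensable already at this step, not only in the last one as your closing sentence suggests. This is exactly where the paper invokes Theorem 1.1 of \cite{capracelytchak} (applicable because CAT($-1$) spaces have telescopic dimension one). A self-contained repair in CAT($-1$): the angle of the triangle $(o,p_n,p_m)$ at $p_n$ is at least $\pi/2$ for $m>n$ (projection onto the convex set $C_{1/n}\ni p_m$), and in a CAT($-1$) space a vertex with angle at least $\pi/2$ lies within the universal distance $\mathrm{arcsinh}(1)$ of the opposite side; hence every $[o,p_m]$, $m>n$, passes uniformly close to $p_n$, the Gromov products $(p_m\mid p_{m'})_o$ tend to infinity, and $(p_n)$ converges to a point of $\partial X$.

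The second gap is the one you yourself flag as ``the technical heart'': the vanishing of the Busemann cocycle $\beta$. The sentence ``a nonzero $\beta$ would \dots let one construct a $g$-invariant geodesic'' is precisely what has to be proved, and no construction is given. It can be completed as follows: if $\beta\neq 0$ then $|m-n|\,|\beta|\le d(g^nx,g^mx)\le |m-n|\,d(x,gx)$ (Busemann functions are $1$-Lipschitz), so the orbit $n\mapsto g^nx$ is a quasi-geodesic line; by stability of quasi-geodesics in Gromov hyperbolic geodesic spaces it has two distinct endpoints at infinity, both fixed by $g$, and $g$ then preserves the geodesic joining them (which exists by Proposition 4.4.4 of \cite{das2017geometry}), contradicting non-attainment. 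Note that the paper goes the other way around and more cheaply: it quotes Proposition 3.1 of \cite{translationlengthsparabolic} for $\ell(g)=0$, and then $\beta=0$ is immediate from $|\beta|\le\ell(g)$. Your reversed order (first $\beta=0$, then $\ell(g)=0$ from the contraction of asymptotic rays issuing from a common horosphere) is a legitimate alternative, but only once the quasi-geodesic step above is actually supplied.
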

	\begin{proof}
		Suppose $\ell(g)$ is achieved. If $\ell(g)=0$, then 1. holds. If $\ell(g)>0$, then $g$ preserves a geodesic (see Theorem II.6.8 of \cite{bridson2013metric}). The geodesic is unique and there are not fixed points by $g$ in $X$ because the projection onto a non-empty  convex and closed set is a strict contraction (see II.2.12 of \cite{bridson2013metric}). Thus 2. holds. 
		
		Suppose  $\ell(g)$ is not achieved. If $g$ fixes two points in $\partial X$, then it preserves the geodesic connecting them (see Proposition 4.4.4 of \cite{das2017geometry}). Therefore $\ell(g)$ is achieved in this geodesic, but this is a contradiction. Thus $g$ fixes at most one point at infinity. 
		
		For every $n\in\N$ define,  $$H_{n}=\{x\in X\mid d_g(x)\leq \ell(g)+\frac{1}{n}\}.$$ The sets $H_n$ are non-empty,  closed and convex. The family $\{H_{n}\}_{n\in\N}$ is such that  $$I=\bigcap\limits_{n\in\N}H_n=\emptyset.$$ This implies that for every $y\in X$, $d(y,H_{n})\to\infty $ as $n\to\infty $. Therefore
		 $$J=\bigcap\limits_{n\in\N}\partial H_{n}\neq\emptyset$$ (see Theorem 1.1 of \cite{capracelytchak}). Every two points in 
		$\partial X$ are connected by a geodesic (see Proposition 4.4.4 of \cite{das2017geometry}), thus  if  $J$ contains more than one point, then the  geodesic connecting any two elements of the intersection is contained in $I$, which is a contradiction.   The sets $H_{n}$ are $g$-invariant, thus  the only element of $J$  is fixed by $g$. 
		
		Consider a geodesic segment $\sigma$ that represents $\xi$. There exists $c(g)\in\R$ such that f or every $x\in X$, $$\begin{array}{rcl}b_{\xi,\sigma(0)}(gx)-b_{\xi,\sigma(0)}(x)&=&b_{\xi,g^{-1}\sigma(0)}(x)-b_{\xi,\sigma(0)}(x)\\&=&c(g).\end{array}$$
		By the triangle inequality $|c(g)|\leq \ell(g)$, but $\ell(g)=0$ (see Proposition 3.1 of \cite{translationlengthsparabolic}), therefore $b_{\xi,\sigma(0)}$ is $g$-invariant. This completes the proof for 3. 
	\end{proof}
	
	The following proposition  can be deduced from  the arguments in \cite{burger2005equivariant}. 
	\begin{prop}\label{transtividad}
		If  $x\in\mathbf{H}_\F^m$, $y_1,y_2\in\partial \mathbf{H}_\F^m$ and if $s\in O_\F(B)$ is such that $sy_1=y_2 $ and $sy_2=y_1$, then the following hold:
		\begin{enumerate}
			\item The action of $O_\F(B)$ on $\mathbf{H}_\F^m$ is transitive. 
			\item The action of $O_\F(B)_x$ is transitive on metric spheres centered at $x$. 
			\item The action of $O_\F(B)$ is double transitive  on $\partial \mathbf{H}_\F^m$.
			\item If $m<\infty$ and $\F=\R,$ 1.,2., and 3. hold for $SO(1,m).$
			\item $O_\F(B)=O_\F(B)_{y_1}\sqcup \Big(O_\F(B)_{y_1} \cdot s \cdot O_\F(B)_{y_1}\Big).$
		\end{enumerate}
	\end{prop}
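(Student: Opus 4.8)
The whole proposition rests on one tool: a version of Witt's extension theorem valid for the strongly non-degenerate form $B$, to the effect that any $B$-isometry between two closed subspaces of $H$ on which $B$ is non-degenerate extends to an element of $G:=O_\F(B)$. I expect this to be the one genuine obstacle, and it is exactly where strong non-degeneracy is indispensable: it guarantees that the $B$-orthogonal complements of two isometric non-degenerate subspaces are again Hilbert spaces, and — the single positive direction having been used up — these complements carry negative-definite forms, hence are isometric as soon as they are separable of the same (infinite) Hilbert dimension. I would either cite this extension from \cite{burger2005equivariant} or reprove it by reducing to the finite-dimensional Witt theorem together with the isomorphism of separable Hilbert spaces of equal dimension.

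Granting the extension, parts (1)--(3) are short. For (1), normalise lifts so that $B(v,v)=B(w,w)=1$; the scalar map $v\mapsto w$ is an isometry between the positive-definite lines $\F v$ and $\F w$, and any extension $g\in G$ gives $g[v]=[w]$. For (2), fix a lift $\tilde x$ with $B(\tilde x,\tilde x)=1$; by the explicit geodesic lifts recalled above, the metric sphere of radius $\rho$ about $x$ is $\{[\cosh(\rho)\tilde x+\sinh(\rho)u] : u\in\tilde x^{\perp},\ B(u,u)=-1\}$. Each element of $G_x$ preserves $\tilde x^{\perp}$ and acts on it through the orthogonal, resp.\ unitary, group of the negative-definite Hilbert space $(\tilde x^{\perp},-B)$; since that group is transitive on its unit sphere, so is $G_x$ on the metric sphere. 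For (3), pick two pairs of distinct boundary points with isotropic representatives. As the maximal totally isotropic subspaces have dimension $\iota(B)=1$, two distinct isotropic lines span a hyperbolic plane, and after rescaling one may arrange $B(\xi_1,\xi_2)=1$ in each pair; the linear map matching the two bases is then an isometry between the two planes, whose Witt extension realises the desired double transitivity.

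For (4) it suffices to realise the isometries of (1)--(3) already inside the identity component $SO^+(1,m)$; since $SO^+(1,m)\subseteq SO(1,m)$, the transitivity statements hold \emph{a fortiori} for $SO(1,m)$. Here the presentation $\mathbf{H}^m_\R=SO^+(1,m)/SO(m)$ gives (1) and identifies the point stabiliser with a special orthogonal group, transitive on the unit tangent sphere, giving (2); and $SO^+(1,m)$ acts two-transitively on $\partial\mathbf{H}^m_\R$, since the stabiliser of a boundary point is a Euclidean similarity group acting transitively on the complementary boundary, giving (3).

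Finally (5) is a Bruhat-type decomposition that follows formally from (3). Put $P=G_{y_1}$. The map $gP\mapsto gy_1$ identifies $G/P$ with $\partial\mathbf{H}^m_\F$ and matches $P$-orbits on $G/P$ with double cosets in $P\backslash G/P$. By the two-transitivity of (3), $P$ has exactly two orbits on the boundary, namely $\{y_1\}$ and its complement; the former is the trivial double coset $P$, and since $sy_1=y_2\neq y_1$ the element $s$ represents the latter. Therefore $G=P\sqcup PsP$, that is $O_\F(B)=O_\F(B)_{y_1}\sqcup\big(O_\F(B)_{y_1}\cdot s\cdot O_\F(B)_{y_1}\big)$.
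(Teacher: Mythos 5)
Your proof is correct: the Witt-type extension lemma for strongly non-degenerate forms (valid here because the orthogonal complements involved are negative definite, hence Hilbert spaces of equal separable dimension), the explicit descriptions of spheres and of pairs of isotropic lines, and the Bruhat-style double-coset argument deducing (5) from two-transitivity are all sound. The paper itself supplies no proof of this proposition---it only remarks that it ``can be deduced from the arguments in'' \cite{burger2005equivariant}---and your argument is precisely the standard route that citation points to, so your write-up matches the intended approach rather than diverging from it.
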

	
	The  group $O_\F(B)$ is denoted by $U(1,m)$ (resp. $O(1,m)$) if $\F=\C$ (resp. $\F=\R$).  For every $G\leq O_\F(B)$, denote $PG$ the natural image under projectivization. 
	In fact, 
	$PO(1,m)=\iso(\mathbf{H}_\R^m)$ and $PU(1,m)$ is an index 2 subgroup of $\iso(\mathbf{H}_\C^n)$ (see Theorem 2.2.3 of \cite{das2017geometry}).
	Moreover $$\iso(\mathbf{H}_\C^m)_o=PU(1,m)$$ and  $$\iso(\mathbf{H}_\R^m)_o=PSO(1,m).$$ 
		Observe  that the diagonal matrix act trivially on $\mathbf{H}_\F^m$, 
	therefore if  $m< \infty$, then $PSU(1,m)=PU(1,m)$.
	
	The topology of these groups will be the quotient topology of the projectivization map. This topologies coincide, for $m<\infty$,  with the topology of uniform convergence on compact sets. 
	
	Suppose  $\xi\in\partial \mathbf{H}^m_\F$ and $G< \iso_o(\mathbf{H}^m_\F)_{\xi}$. 
	Let  $b_{\xi,\sigma(0)}$ be a Busemann function centered at $\xi$ and normalized in $\sigma(0)$, for some geodesic ray $\sigma$. The geodesic ray $\sigma$ admits a lift  $$\tilde{\sigma}(t)=cosh(t)\tilde{x}+sinh(t)u,$$ with  $u,\tilde{x}\in H$ such that $B(\tilde{x},\tilde{x})=1=-B(u,u)$ and $B(u,\tilde{x})=0$. For every $g\in G$, there exists $c(g)\in \R$ such that for every $y\in \mathbf{H}_\F^m$, 
	$$b_{\xi,\sigma(0)}(y)=b_{\xi,\sigma(0)}(gy)+ c(g).$$ 
	The map $c: G\rightarrow \R$, called the \textit{Busemann character} associated to $\xi$,  is a continuous homomorphism and does not depend on the choice of $\sigma$.
	
	Observe that if $\tilde{y}$ is a normalized lift of $y$, then 
	$$c(g)=\ln\left(\frac{|B(\tilde{y},\tilde{x}+u|}{|B(\tilde{g}\tilde{y},\tilde{x}+u)|}\right),$$
	where $\tilde{g}$ is any linear representative of the isometry $g$. Thus,  if $$\tilde{g}^{}(\tilde{x}+u)=\theta(\tilde{g})(\tilde{x}+u),$$ with $\theta(\tilde{g})\in\C\setminus\{0\}$,  then $c(g)=\ln(|\theta(\tilde{g})|)$. Therefore  the map $g\mapsto |\theta(\tilde{g})|\in \R_{>0}$ is a continuous homomorphism. 
	\begin{prop}\label{kerneldec}
		If $G<PO_\F(B)_\xi$ and $c:G\rightarrow\R$ is the Busemann character associated to $\xi$, then
		\begin{enumerate}
			\item $\ker(c)=\{T\in G\mid T \,\text{is elliptic or parabolic}\}.$
			\item For every $T\in G$, $\ell(T)=|c(g)|$. 
		\end{enumerate}	
	\end{prop}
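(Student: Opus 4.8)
The plan is to prove the second assertion first and to deduce the first from it via the trichotomy of \cref{tricotomia}. The basic tool is that every Busemann function is $1$-Lipschitz: as a pointwise limit of the $1$-Lipschitz functions $y\mapsto d(y,\sigma(t))-t$, it satisfies $|b_{\xi,\sigma(0)}(y)-b_{\xi,\sigma(0)}(z)|\leq d(y,z)$. Taking $z=Ty$ in the defining relation $c(T)=b_{\xi,\sigma(0)}(y)-b_{\xi,\sigma(0)}(Ty)$ gives $|c(T)|\leq d(y,Ty)$ for every $y$, and hence
$$|c(T)|\leq \ell(T).$$
This already disposes of the non-hyperbolic cases. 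If $T$ is elliptic it fixes some $p\in\mathbf{H}^m_\F$, so $c(T)=b_{\xi,\sigma(0)}(p)-b_{\xi,\sigma(0)}(Tp)=0$ and $\ell(T)=0$; if $T$ is parabolic then $\ell(T)=0$ by \cref{tricotomia}, so $c(T)=0$ by the inequality above. In both cases $\ell(T)=|c(T)|=0$.

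The heart of the matter is the hyperbolic case, where I need the reverse inequality $\ell(T)\leq|c(T)|$. Such a $T$ preserves a unique geodesic $\gamma$, its axis, translating along it by $\ell(T)>0$ and fixing the two endpoints $\gamma(+\infty),\gamma(-\infty)\in\partial\mathbf{H}^m_\F$. I expect the main obstacle to be the geometric fact that the prescribed fixed point $\xi$ must be one of these two endpoints. This is where the hyperbolicity of the ambient space enters: a loxodromic isometry of a Gromov hyperbolic space has north--south dynamics on its boundary, so its only boundary fixed points are the endpoints of its axis (see \cite{das2017geometry}); consequently $\xi\in\{\gamma(+\infty),\gamma(-\infty)\}$. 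Alternatively one can reduce to a finite-dimensional $T$-invariant hyperbolic subspace spanned by the isotropic vectors of $\xi$ and of the two endpoints, each of which is a $T$-eigenline because $T$ fixes the corresponding boundary point and, being hyperbolic, cannot interchange the endpoints of its axis.

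Once $\xi$ is known to be an endpoint, the computation is immediate. Choosing $\sigma=\gamma$ oriented so that $\xi=\gamma(+\infty)$, one has for all $s$
$$b_{\xi,\sigma(0)}(\gamma(s))=\lim_{t\to\infty} d(\gamma(s),\gamma(t))-t=-s,$$
while $T\gamma(0)=\gamma(\pm\ell(T))$ depending on the direction of translation. Hence $c(T)=b_{\xi,\sigma(0)}(\gamma(0))-b_{\xi,\sigma(0)}(T\gamma(0))=\pm\ell(T)$, so $|c(T)|=\ell(T)$; in particular $c(T)\neq0$. This completes the second assertion. For the first, I combine it with \cref{tricotomia}: since $\ell(T)=0$ holds precisely for elliptic and parabolic $T$, the identity $\ell(T)=|c(T)|$ shows that $c(T)=0$ if and only if $T$ is elliptic or parabolic, which is exactly the claimed description of $\ker(c)$.
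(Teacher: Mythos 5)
Your proof is correct, and it takes a genuinely more metric route than the paper's. The paper also splits into the three types of \cref{tricotomia}, but its hyperbolic case is linear-algebraic: it takes a lift $\tilde T$, notes that the two invariant isotropic lines $[y_1],[y_2]$ carry eigenvalues $\theta_1,\theta_2$ with $\theta_1\overline{\theta_2}=1$, observes that the point $\left[\tfrac{1}{\sqrt 2}(y_1+y_2)\right]$ lies on the axis with displacement $|\ln|\theta_1||$, and invokes the formula $c(g)=\ln(|\theta(\tilde g)|)$ established just before the proposition; the elliptic case is then handled by a somewhat roundabout argument (pointwise-fixed rays plus the same eigenvalue computation), and part 2 is declared to follow from part 1. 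You instead stay entirely inside CAT($-1$) geometry: the $1$-Lipschitz bound $|c(T)|\leq\ell(T)$ kills the elliptic and parabolic cases in one line, and the hyperbolic case is a direct evaluation of the Busemann function along the axis, with the two parts deduced in the reverse order. A genuine merit of your write-up is that you isolate and justify the step both arguments need but the paper passes over silently, namely that the prescribed fixed point $\xi$ must be an endpoint of the axis of a hyperbolic $T$ (via north--south dynamics, or equivalently via uniqueness of the axis: $T$ would otherwise translate along the distinct geodesic joining $\xi$ to an axis endpoint). What the paper's computation buys in exchange is the explicit eigenvalue formula $c(T)=\ln|\theta_1|$, which is reused later (the proof of \cref{semultiplicapordos} appeals to "the arguments in the proof of \cref{kerneldec}"), whereas your argument, being purely metric, would apply verbatim to isometries of an arbitrary complete CAT($-1$) space fixing a boundary point.
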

	\begin{proof}
	
		1.	Suppose $\xi$ is represented by the isotropic element $y_1.$ Let $T\in G$  and let $\tilde{T}$ be a linear representative of $G$. 
		If  $T$ is hyperbolic, $\tilde{T}$ leaves invariant two isotropic lines with respective representatives $y_1$ and $y_2$. Suppose that $B(y_1,y_2)=1$. Thus,  if $\tilde{T}(y_i)=\theta_i y_i$, then $\theta_1\overline{\theta_2}=1$.  
		
		The point $x$ represented by $\frac{1}{\sqrt{2}}( y_1+y_2)$ belongs to the geodesic connecting $y_1$ and $y_2$ because $2d(T(x),x)=d(T^2(x),x)$. 
		Observe that  $d(T(x),x)=|\ln(|\theta_1|)|. $ 
		This implies that 
		$|\theta_1|\neq 1$, and as it was noticed before, $c(g)=\ln(|\theta_1|) $. Therefore  $T\not\in \ker(c)$. 
		
		If $T$ is parabolic, by \cref{tricotomia},  $c(T)=0$. If $T$ is elliptic then  $T$ fixes pointwise every  geodesic ray representing $\xi$ that starts on a $T$-fixed point in $\mathbf{H}_\F^m$.  Therefore $T$ fixes pointwise the entire geodesic containing any of these geodesic rays. Using the arguments for the hyperbolic case it is possible to conclude that $c(g)=0$. 
		
		The point 2. follows from the arguments of 1 and \cref{tricotomia}. 
	\end{proof}
	
	Let $\{e_1,e_2\}$ be the canonical base of $\C^2$ and define and fix the basis $\{\xi_1,\xi_2\}$,  where 
	$\xi_1=\frac{1}{\sqrt{2}}(e_1+e_2)$ and  $\xi_2=\frac{1}{\sqrt{2}}(e_1-e_2)$. Observe that $B(\xi_1,\xi_1)=0=B(\xi_2,\xi_2)$ and  $B(\xi_1,\xi_2)=1$.
	
	In the basis $\{\xi_1,\xi_2\}$ every element of $U(1,1)_{\xi_1}$ has the form 
	$$\begin{pmatrix}
		\lambda&z\\
		0&\gamma
	\end{pmatrix}$$
	with $\lambda\overline{\gamma}=1$ and $Re(\gamma\overline{z})=0$. Thus  for every $T\in\iso_o(\mathbf{H}^1_\C)_{[\xi_1]}$ there exists 
	$$g(\lambda,b)=\begin{pmatrix}
		\lambda&ib\\0&\lambda^{-1}
	\end{pmatrix}\in SU(1,1)$$ with $\lambda>0$ and $b\in\R$ 
	such that $\pi(g(\lambda,b))=T,$ where $\pi$ is the projectivization map.  
	Define the the  group $$P=\{g(\lambda,b)\}_{\lambda>0, b\in \R}\leq SU(1,1).$$ 
	Observe that for  $g(\lambda,b),g(\gamma,b)\in P,$
	$$g(\lambda,b)\cdot g(\gamma,d)=g(\lambda\gamma,\gamma^{-1}b+\lambda d)$$
	and that $\pi_P:P\rightarrow \iso_o(\mathbf{H}_\C^1)_{[\xi_1]}$ is an isomorphism. A transformation $g(\lambda,b)$ is parabolic if, and only if, $\lambda=1$ and $b\neq 0$. All the nontrivial  maps $g(\lambda,0)$ are hyperbolic. 
	
	If   $s\in SU(1,1)$ is defined by $s(\xi_1)=i\xi_2$ and $s(\xi_2)=i\xi_1$,  then by \cref{transtividad} $$\iso(\mathbf{H}_\C^1)_o =\pi(P)\sqcup\pi(P)\pi(s)\pi(P).$$
	
	Every element of $SU(1,1)$ has the form 
	$$	M(\alpha,\beta)=\begin{pmatrix}
		\alpha&\beta\\\overline{\beta}&\overline{\alpha}
	\end{pmatrix}, $$
	where $|\alpha|^2-|\beta|^2=1.$ 
	The map $SU(1,1)\xrightarrow{\psi}SL_2(\R)$ given by 
	$$\psi(M(\alpha,\beta))=\begin{pmatrix}
		Re(\alpha)+ Im(\beta)&Re(\beta)+Im(\alpha)\\
		Re(\beta)-Im(\alpha)&Re(\alpha)-Im(\beta)
	\end{pmatrix}$$ is an isomorphism. 
	Let $T\in SL_2(\R)$ be $$\frac{1	}{\sqrt{2}}\begin{pmatrix}
		1&-1\\1&1
	\end{pmatrix}$$
	and define the map $SU(1,1)\xmapsto{\Psi}  SL_2(R)$ as 
	$\Psi(A)=T^{-1}\psi(A)T^{}$. 
	The map $\Phi$ is such that 
	$$\psi(g(\lambda,b))=\begin{pmatrix}
		\lambda&b\\
		0&\lambda^{-1}
	\end{pmatrix}$$ 
	and $$\psi(s)=\begin{pmatrix} 0&1\\
		-1&0\end{pmatrix}.$$
	The group $SU(1,1)$ admits a simple description in terms of generators and the  relations between them. The following theorem is a well known fact and it  will play a crucial role in the proof of the main theorem of this article. A proof of it can be found in p.  209 of \cite{sl2rlang}. 
	\begin{teo}\label{relacionesygeneradores}
		Let $F$ be the  free group   generated by the family $\{u(r)\}_{r\in\R\setminus0}$ and an element $w$. For $r\neq 0$, denote  $$s(r)=wu(r^{-1})wu(r)wu(r^{-1}).$$ Consider the relations 
		\begin{enumerate}
			\item $u$ is an additive homomorphism. 
			\item $s$ is a multiplicative homomorphism.
			\item $w^2=s(-1)$
			\item $s(a)u(b)s(a^{-1})=u(ba^2)$, for every $a,b\neq 0 $.
		\end{enumerate}
		If  $G$ is   the quotient of $F$ under these relations then $G$ is isomorphic to $SU(1,1). $
	\end{teo}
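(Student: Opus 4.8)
The plan is to prove the theorem by exhibiting a surjective homomorphism $\phi\colon G\to SU(1,1)$ and then showing it is injective by a Bruhat normal form argument. First I would use the isomorphism $\psi$ of the previous page to identify $SU(1,1)$ with $SL_2(\R)$ and send the generators to explicit matrices, namely $u(r)\mapsto\begin{pmatrix}1&r\\0&1\end{pmatrix}$ and $w\mapsto\begin{pmatrix}0&1\\-1&0\end{pmatrix}$. A direct computation then gives $s(r)=\begin{pmatrix}r&0\\0&r^{-1}\end{pmatrix}$, and each of relations 1--4 is verified by a one-line matrix multiplication (for instance $w^2=-I=s(-1)$, and conjugating $u(b)$ by the diagonal matrix $s(a)$ multiplies the upper entry by $a^2$). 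Since the upper unipotents together with $w$ generate $SL_2(\R)$ by the Gauss decomposition, this yields a surjective homomorphism $\phi$, and it remains to prove $\ker\phi$ is trivial.

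For injectivity I would set up a Bruhat decomposition inside the abstract group $G$. Let $U=\langle u(r)\rangle$, $H=\langle s(a)\rangle$ and $B=UH$. From relations 1, 2, 4 the assignments $r\mapsto u(r)$ and $a\mapsto s(a)$ are injective (compose with $\phi$) and $H$ normalizes $U$, so every element of $B$ is uniquely $u(x)s(a)$. I then define the candidate normal form set
$$N=\{u(x)s(a) : a\in\R^\times,\ x\in\R\}\ \cup\ \{u(x)s(a)\,w\,u(y) : a\in\R^\times,\ x,y\in\R\}\subseteq G.$$
Comparing with the unique Gauss decomposition of matrices in $SL_2(\R)$ — upper triangular when the lower-left entry vanishes, and of the form $u(x)s(a)wu(y)$ with $a,x,y$ read off from the entries otherwise — shows that $\phi$ maps $N$ bijectively onto $SU(1,1)$. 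Hence it suffices to prove $N=G$; since $1=u(0)s(1)\in N$ and $G$ is generated by the $u(r)$ and $w$ (with $u(r)^{-1}=u(-r)$ and $w^{-1}=w^3$ again words in these, as $w^4=s(-1)^2=s(1)=1$), this reduces to checking that $N$ is stable under left multiplication by each generator. Stability under the $u(r)$ is immediate from relation 1. Stability under $w$ is the crux: writing $w\cdot u(x)s(a)wu(y)=\big(w\,u(x)s(a)\,w\big)u(y)$, one must put $w\,u(x)s(a)\,w$ into normal form.

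The main obstacle, and the genuine content of the presentation, is therefore to derive from relations 1--4 alone the two identities
$$w\,s(a)\,w^{-1}=s(a^{-1}),\qquad w\,u(x)\,w=u(-x^{-1})\,s(-x^{-1})\,w\,u(-x^{-1})\quad(x\neq 0),$$
together with the easy remark that $s(-1)=w^2$ is central (it commutes with $w$ trivially, with $H$ by relation 2, and with $U$ by relation 4 at $a=-1$). The first identity shows $w$ normalizes $H$; combined with relation 4 used to move $s(a)$ past $u(x)$, it reduces $w\,u(x)s(a)\,w$ to a torus element times $w\,u(x')\,w$ (when $x=0$ one simply gets $w\,s(a)\,w=s(a^{-1})w^2=s(-a^{-1})\in B$). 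The second identity — the Steinberg-type relation resolving the collision of two Weyl elements — then returns $w\,u(x')\,w$, and hence the whole product, to $N$. Both identities are exactly what relations 2--4 are engineered to guarantee, and proving them is a finite but delicate word calculation: I would substitute the definition $s(r)=w\,u(r^{-1})\,w\,u(r)\,w\,u(r^{-1})$ and repeatedly apply relations 1 and 4 to collapse the resulting word. Once these are in hand, $N=G$ follows, so $\phi$ is a bijection and therefore the desired isomorphism $G\cong SU(1,1)$.
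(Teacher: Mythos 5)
Your proposal is correct, and it coincides with the paper's approach in the only sense available: the paper does not prove this theorem at all but cites Lang's $SL_2(\R)$ (p.~209), and the proof given there is exactly your argument — check the relations on explicit matrices to get a surjection $\phi$, then prove injectivity by showing the normal-form set $N=B\cup BwU$ contains $1$ and is stable under left multiplication by the generators, so that $N=G$ maps bijectively onto the matrix group via the uniqueness of the Bruhat/Gauss form. The two identities you isolate as the crux, $w\,s(a)\,w^{-1}=s(a^{-1})$ and $w\,u(x)\,w=u(-x^{-1})\,s(-x^{-1})\,w\,u(-x^{-1})$, are indeed derivable from relations 1--4 together with the defining word for $s(r)$ (solve $s(x)=wu(x^{-1})wu(x)wu(x^{-1})$ for $wu(x)w$, use $w^{-1}=s(-1)w$ and centrality of $s(-1)$), so the sketched word computation does close up.
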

	Let $SU(1,1) \xrightarrow{\phi}GL_3(\R)$ be the map defined by 
	$$\phi(M(\alpha,\beta))=\begin{pmatrix}
		-\frac{1}{2}(\beta^2+\overline{\beta}^2-\alpha^2-\overline{\alpha}^2)&\frac{i}{2}(-\beta^2+\overline{\beta}^2-\alpha^2+\overline{\alpha}^2)&i(\overline{\alpha\beta}-\alpha\beta)\\
		-\frac{i}{2}(\beta^2-\overline{\beta}^2-\alpha^2+\overline{\alpha}^2)&\frac{1}{2}(\beta^2+\overline{\beta}^2+\alpha^2+\overline{\alpha}^2)&\overline{\alpha\beta}+\alpha\beta\\
		i(\overline{\alpha}\beta-\alpha\overline{\beta})&\overline{\alpha}\beta+\alpha\overline{\beta}&|\alpha|^2+|\beta|^2
	\end{pmatrix}$$
	and let $$T=\frac{1}{\sqrt{2}}\begin{pmatrix}
		0&0&\sqrt{2}\\
		1&-1&0\\
		1&1&0
	\end{pmatrix}. $$
	Define the map  $SU(1,1)\xrightarrow{\Phi}SO(1,2)$ given by $$\Phi(M(\alpha,\beta))=T^{-1}\phi(M(\alpha,\beta))T.$$ 
	The map $\Phi$ is a  homomorphism and  $\ker(\Phi)=\{Id,-Id\}$. With an  appropriate choice of a basis $\{\xi'_1,\xi'_2,u\}$ of $\R^3$, where $$B(\xi'_i,\xi'_i)=0=B(\xi_i,u)$$ and $$B(\xi'_1,\xi'_2)=1=-B(u,u),$$ the map $\Phi$ is such that, 
	$$\Phi(s)=\begin{pmatrix}
		0&1&0\\
		1&0&0\\
		0&0&-1
	\end{pmatrix}, $$
	$$\Phi(g(1,b))=\begin{pmatrix}
		1&b^2&-\sqrt{b}\\
		0&1&0\\0&-\sqrt b &1
	\end{pmatrix}$$
	and 
	$$\Phi(g(\lambda,0))=\begin{pmatrix}
		\lambda^2&0&0\\
		0&\lambda^{-2}&0\\
		0&0&1
	\end{pmatrix}.$$
Every elliptic transformation is contained in a compact subgroup, therefore its image under $\Phi$ is elliptic too (see Proposition II.2.7 of \cite{bridson2013metric}).	Thus, by \cref{transtividad} and the previous argument, $\Phi$  preserves the type.     

	Consider the following commutative diagram,
	$$\begin{tikzcd}
		SU(1,1)\arrow{r}{\Phi}\arrow{d}{}&SO(1,2)\arrow{d}\\
		\text{Isom}(\mathbf{H}_\C^1)_o\arrow{r}{\overline{\Phi}}&\text{Isom}(\mathbf{H}_\R^2)_o, 
	\end{tikzcd}$$ 
	where the vertical arrows are the projectivization maps and $\overline{\Phi}$ is the induced isomorphism.  
	\begin{lem}\label{semultiplicapordos}
		For every $g\in Isom_o(\mathbf{H}_\C^1)$, $$2\ell(g)=\ell(\overline{\Phi}(g)).$$
	\end{lem}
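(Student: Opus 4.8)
The plan is to split along the trichotomy of \cref{tricotomia} and to exploit that $\Phi$—and hence $\overline{\Phi}$—preserves the isometry type, which was established just above the statement. For $g$ elliptic or parabolic one has $\ell(g)=0$, and since $\overline{\Phi}(g)$ is again elliptic or parabolic, also $\ell(\overline{\Phi}(g))=0$; thus $2\ell(g)=0=\ell(\overline{\Phi}(g))$ holds trivially. The entire content of the lemma therefore concentrates in the hyperbolic case, to which I reduce everything.

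For the hyperbolic case I would first pass to a normal form by conjugation. Since the displacement length is a conjugacy invariant on any metric space and $\overline{\Phi}$ is a group isomorphism (so $\overline{\Phi}(hg_0h^{-1})=\overline{\Phi}(h)\overline{\Phi}(g_0)\overline{\Phi}(h)^{-1}$), both sides of the claimed identity are unchanged under replacing $g$ by a conjugate. Now any hyperbolic $g\in\iso_o(\mathbf{H}^1_\C)$ fixes two boundary points, and using the double transitivity of \cref{transtividad} I may conjugate these to $[\xi_1]$ and $[\xi_2]$. An element fixing both $[\xi_1]$ and $[\xi_2]$ has, by the description of $U(1,1)_{\xi_1}$ recalled above, a diagonal representative $\mathrm{diag}(\lambda,\gamma)$ with $\lambda\overline{\gamma}=1$; writing $\lambda=\rho e^{i\theta}$ and factoring out the scalar $e^{i\theta}$, which acts trivially projectively, shows that $g$ is projectively $\pi(g(\rho,0))$ with $\rho>0$. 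Hence it suffices to verify $2\ell(g(\lambda,0))=\ell(\overline{\Phi}(g(\lambda,0)))$ for $\lambda>0$, $\lambda\neq1$.

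The final step is the explicit eigenvalue computation, for which I invoke \cref{kerneldec} on the cyclic group generated by the isometry. On the source side, the representative $g(\lambda,0)=\mathrm{diag}(\lambda,\lambda^{-1})$ fixes the isotropic lines $[\xi_1]$, $[\xi_2]$, normalized by $B(\xi_1,\xi_2)=1$, with eigenvalues $\lambda,\lambda^{-1}$, so the Busemann character gives $\ell(g(\lambda,0))=|\ln\lambda|$. On the target side I use the explicit formula $\Phi(g(\lambda,0))=\mathrm{diag}(\lambda^2,\lambda^{-2},1)$ in the basis $\{\xi'_1,\xi'_2,u\}$ with $B(\xi'_1,\xi'_2)=1$: the isotropic eigenvalues are now $\lambda^2,\lambda^{-2}$, whence $\ell(\overline{\Phi}(g(\lambda,0)))=|\ln\lambda^2|=2|\ln\lambda|$. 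Comparing the two yields the desired factor of $2$. I expect the only delicate point to be the normal-form reduction—specifically, checking that in $\mathbf{H}^1_\C$ an isometry fixing two boundary points carries no residual rotational (screw) part, so that it is genuinely projectively conjugate to a \emph{real} diagonal $g(\rho,0)$. This is what makes the clean eigenvalue comparison legitimate, and it is special to the (real) two-dimensionality of $\mathbf{H}^1_\C$.
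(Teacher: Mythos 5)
Your proposal is correct and follows essentially the same route as the paper: use the type-preservation of $\overline{\Phi}$ to dispose of elliptic and parabolic elements, reduce hyperbolic elements by conjugation to the normal form $g(\lambda,0)$, and conclude by the eigenvalue/Busemann-character computation from \cref{kerneldec} applied to $\mathrm{diag}(\lambda,\lambda^{-1})$ and $\Phi(g(\lambda,0))=\mathrm{diag}(\lambda^2,\lambda^{-2},1)$. You merely make explicit the details the paper leaves implicit (conjugacy invariance of $\ell$, the double-transitivity argument, and the absence of a residual rotational part in the diagonal representative), so no further changes are needed.
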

	\begin{proof}
		The map $\overline{\Phi}$ preserves the type (elliptic, parabolic and hyperbolic), therefore it is enough to prove the claim for hyperbolic elements. Up to conjugation, every hyperbolic element in $\iso(\mathbf{H}^1_\C)_o$  has a representative $g(\lambda,0)$ for some $\lambda>0$. For these elements the claim follows from the arguments in the proof of \cref{kerneldec}. 	 	
	\end{proof}
	\subsection{Isometric  representations}
	In this subsection the generalities of  representations into groups of isometries of hyperbolic spaces  and  the tools used for the statements of the last section are discussed. Most of the results and definitions of this subsection are in \cite{monod2018notes}. here they will be  presented in particular for the case that this article deals with. 
	
	Given a topological group $G$, a homomorphism $G\xrightarrow{\rho}\iso (\mathbf{H}_\mathbf{F}^\infty)$ is called a \textit{representation}. If for every $x\in\mathbf{H}_\mathbf{F}^\infty$, the map $g\mapsto \rho(g)x$ is continuous, then the representation is called \textit{continuous}. In this work  every representation will be considered continuous. 
	
	The representation  $\rho$ is called \textit{non-elementary} if it does not fix any point in $\mathbf{H}_\mathbf{F}^\infty\cup \partial \mathbf{H}_\mathbf{F}^\infty$ and it does not permute two points in $\partial \mathbf{H}_\mathbf{F}^\infty$. The representation $\rho$  is called \textit{irreducible} if it is non-elementary and $\mathbf{H}_\mathbf{F}^\infty$ does not admit a proper $G$-invariant $\F$-hyperbolic subspace. 
	
	The next proposition shows that for non-elementary representations the concept of irreducibility behaves in a very different way than in the linear setting.  The proposition   was proved for the real case in Proposition 4.3 of \cite{burger2005equivariant}. The proof for the complex case works exactly in the same way. 
	\begin{prop}
		If $G\xrightarrow{\rho}\iso (\mathbf{H}_\mathbf{F}^\infty) $ is non-elementary, there exists  a unique $G$-invariant $\F$-hyperbolic subspace $L$ such that for every  $G$-invariant $F$-hyperbolic space $N$, $L\subset N$. 
	\end{prop}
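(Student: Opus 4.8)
The plan is to show existence of the minimal $G$-invariant $\F$-hyperbolic subspace by intersecting all $G$-invariant hyperbolic subspaces and proving that this intersection is again a (nonempty, non-degenerate) hyperbolic subspace. First I would fix notation: let $\mathcal{L}$ denote the collection of all $G$-invariant $\F$-hyperbolic subspaces of $\mathbf{H}_\F^\infty$, equivalently (via the projectivization correspondence explained in the preliminaries) the collection of closed $G$-invariant $\F$-subspaces $L\leq H$ on which $B$ is non-degenerate of signature $(1,m')$. The set $\mathcal{L}$ is nonempty since $H$ itself belongs to it. The natural candidate for the minimal subspace is $$L_0=\bigcap_{L\in\mathcal{L}}L,$$ and the whole point is to verify that $\pi(L_0)$ is again an element of $\mathcal{L}$, i.e.\ that $B|_{L_0\times L_0}$ is non-degenerate of signature $(1,m')$ for some $m'$.

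The key geometric input is non-elementarity. The main obstacle — and the step I expect to require real work — is ruling out the degenerate possibilities for $L_0$: namely that $B|_{L_0}$ is negative semi-definite (so $\pi(L_0)$ is empty), or that $B|_{L_0}$ is degenerate with a one-dimensional radical spanned by an isotropic vector (so $\pi(L_0)$ is a single boundary point), or that it has an isotropic radical of higher dimension. The idea is that each of these degenerate alternatives forces $G$ to fix or permute a bounded configuration at the boundary, contradicting non-elementarity. Concretely, I would argue as follows. Because $G$ preserves each $L\in\mathcal{L}$, it preserves $L_0$; hence it preserves the radical $R=L_0\cap L_0^{\perp}$ and the signature-data of $B|_{L_0}$. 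If $L_0=\{0\}$ or $B|_{L_0}$ is negative definite, then every $L\in\mathcal{L}$ would have to contain a positive vector not shared by others, and one shows that a $G$-invariant positive direction or a $G$-invariant negative-definite configuration produces a $G$-fixed point in $\mathbf{H}_\F^\infty\cup\partial\mathbf{H}_\F^\infty$ or a $G$-fixed totally-geodesic point, contradicting that $\rho$ is non-elementary.

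For the genuinely degenerate case, suppose $B|_{L_0}$ has nontrivial isotropic radical $R\neq\{0\}$. Since $G$ preserves $L_0$ and $B$, it preserves $R$, and $R$ consists of isotropic vectors, so $\pi(R)\subset\partial\mathbf{H}_\F^\infty$ is a $G$-invariant subset of the boundary. A non-elementary $G$ fixes no boundary point and swaps no pair of boundary points; one then deduces that $\pi(R)$ cannot be a single point or a pair, and a short convexity/CAT($-1$) argument (using that any two boundary points are joined by a geodesic, as recalled in the proof of \cref{tricotomia}) shows that a larger $G$-invariant isotropic set would still force an invariant geodesic or invariant finite configuration, again contradicting non-elementarity. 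This pins down that $B|_{L_0}$ is non-degenerate; and since $L_0\subseteq L$ for some $L$ of signature $(1,m')$ while $L_0$ must still contain a positive direction (otherwise $\pi(L_0)=\emptyset$ and $G$ would fix the boundary set as above), its signature is forced to be $(1,m'')$ with $\iota_+=1$. Thus $\pi(L_0)$ is a $G$-invariant $\F$-hyperbolic subspace contained in every member of $\mathcal{L}$, giving both existence and the stated minimality. Uniqueness is immediate: any $N$ with the universal containment property and any $L$ minimal in the above sense satisfy $L\subseteq N$ and $N\subseteq L$, hence $L=N$.
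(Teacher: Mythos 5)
Your overall skeleton (intersect all invariant hyperbolic subspaces, check the intersection is again hyperbolic, then minimality and uniqueness are formal) is reasonable, and for context the paper does not reprove this statement at all: it cites Proposition 4.3 of Burger--Iozzi--Monod and notes the complex case is verbatim. But your proposal has a genuine gap exactly at the step you yourself flag as ``the step requiring real work'': you never produce a mechanism forcing the intersection $L_0$ to contain a positive vector. The sentence ``every $L\in\mathcal{L}$ would have to contain a positive vector not shared by others, and one shows that a $G$-invariant positive direction \dots produces a $G$-fixed point'' is not an argument; it does not even address why $L_0\neq\{0\}$. The missing idea is the \emph{limit set}. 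Since $\rho$ is non-elementary, orbits are unbounded (a bounded orbit has a circumcenter, giving a fixed point in $\mathbf{H}_\F^\infty$), so the limit set $\Lambda\subset\partial\mathbf{H}_\F^\infty$ of an orbit is non-empty and $G$-invariant, and it has at least two points (a single limit point would be $G$-fixed). Every invariant hyperbolic subspace $\pi(N)$ is closed, convex and contains an orbit, hence $\Lambda\subset\partial\pi(N)$; equivalently, the isotropic lines representing points of $\Lambda$ all lie in $N$. Therefore the closed span of these lines lies in $L_0$, and since distinct boundary points have non-orthogonal isotropic representatives $\xi_1,\xi_2$, a suitable combination $\xi_1+c\,\xi_2$ is a positive vector in $L_0$. (Burger--Iozzi--Monod in fact take this closed span of $\Lambda$ as the minimal subspace directly, which short-circuits the intersection altogether; your route works, but only after this same lemma is proved.)

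Your treatment of the degenerate radical is also incorrect as stated. You argue that a higher-dimensional isotropic radical would give ``a larger $G$-invariant isotropic set'' which ``would still force an invariant geodesic or invariant finite configuration, again contradicting non-elementarity.'' Non-elementarity only forbids invariant subsets of the boundary with one or two points; invariant infinite boundary sets are perfectly compatible with it --- the limit set itself is one --- so this deduction fails. The correct repairs are both short. Linear-algebraically: the radical $R=L_0\cap L_0^{\perp}$ is totally isotropic, so $\dim_\F R\leq \iota(Q)=1$ by the definition of signature $(1,n)$; moreover once $L_0$ contains a positive vector $v$, we have $R\subset v^{\perp}$, which is negative definite, forcing $R=0$ with no appeal to the group at all. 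Alternatively, if $R$ were a line it would be a single $G$-fixed boundary point, contradicting non-elementarity. With $R=0$ and a positive vector present, $B|_{L_0}$ is non-degenerate of signature $(1,m'')$ since $\iota_+(Q)=1$, and strong non-degeneracy of the restriction is inherited (Proposition 2.8 of Burger--Iozzi--Monod, as recalled in the preliminaries); your uniqueness paragraph is then fine.
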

	The space $L$ will be called \textit{the irreducible part} of $\rho$.
	The following  result is similar to Proposition 5.1 of \cite{monod2018notes} or to Proposition 2.1 of \cite{monod2014exotic}. The proof for this case can be mimicked from those of any of the aforementioned propositions. 
	\begin{prop}\label{preservaeltipo}
		A non-elementary representation $\iso(\mathbf{H}_\C^1)_o\xrightarrow{\rho} \iso(\mathbf{H}^\infty_\F)$ preserves the type (elliptic, parabolic and hyperbolic) and $\rho(P)$ fixes a unique point $\xi\in\partial\mathbf{H}^m_\F $. 
	\end{prop}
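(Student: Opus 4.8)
The plan is to handle the three dynamical types separately, exploiting that $G:=\iso(\mathbf{H}^1_\C)_o$ is generated by the minimal parabolic $P$ and the involution $s$, that $P=AN$ with $A=\{g(\lambda,0):\lambda>0\}$ hyperbolic and $N=\{g(1,b):b\in\R\}$ unipotent, and that the commutation relation $g(\lambda,0)\,g(1,b)\,g(\lambda,0)^{-1}=g(1,\lambda^2 b)$ makes $A$ contract $N$. First I would dispose of elliptic elements: such a $g$ lies in a maximal compact subgroup $K\cong SO(2)$, and since $\rho$ is continuous every orbit $\rho(K)x$ is the compact, hence bounded, image of $K$ in the complete CAT(-1) space $\mathbf{H}^\infty_\F$; thus $\rho(K)$ fixes the circumcenter of such an orbit and each $\rho(k)$ is elliptic.

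Next I would produce the boundary fixed point. The group $P$ is solvable, hence amenable, so by the Adams--Ballmann theorem $\rho(P)$ either fixes a point of $\partial\mathbf{H}^\infty_\F$ or stabilises a Euclidean flat $F\subseteq\mathbf{H}^\infty_\F$. Since $\mathbf{H}^\infty_\F$ is CAT(-1) it contains no $2$-flat, so $F$ is a point or a geodesic line, and as $P$ is connected it cannot reverse a line; hence in every case $\rho(P)$ fixes a point of $\mathbf{H}^\infty_\F\cup\partial\mathbf{H}^\infty_\F$. The content is to promote this to a fixed point at infinity, i.e. to exclude a bounded $\rho(P)$-orbit. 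Here I would use non-elementarity through the Bruhat decomposition of \cref{transtividad}, $\iso(\mathbf{H}^1_\C)_o=\pi(P)\sqcup\pi(P)\pi(s)\pi(P)$: the conjugate $sPs^{-1}$ is the opposite parabolic, $\langle P,sPs^{-1}\rangle=G$, and $A\subseteq P\cap sPs^{-1}$, so a fixed point $x_0\in\mathbf{H}^\infty_\F$ of $\rho(P)$ would, after analysing the action of $\rho(A)$ and $\rho(s)$ on the geodesic $[x_0,\rho(s)x_0]$, force $\rho(G)$ into an elementary configuration, a contradiction. This yields a fixed point $\xi\in\partial\mathbf{H}^\infty_\F$ of $\rho(P)$.

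With $\xi$ fixed, let $c\colon\rho(P)\to\R$ be the Busemann character at $\xi$ and put $\chi=c\circ\rho\colon P\to\R$. Since $N$ is the commutator subgroup of $P$, the homomorphism $\chi$ kills $N$ and factors through $A\cong\R_{>0}$; and by \cref{kerneldec}, $\ker\chi$ is exactly the set of $T$ with $\rho(T)$ elliptic or parabolic, while $\ell(\rho(T))=|\chi(T)|$. To preserve the hyperbolic type I must show $\chi$ is nonzero on $A$: were $\chi\equiv0$, then $\rho(P)$ would fix $\xi$ together with all its horoballs, and conjugating by the involution $\rho(s)$ (an involution because $s^2=-\mathrm{Id}$ acts trivially) while using $sAs^{-1}=A$ would again trap $\rho(G)$ elementarily; hence $\chi$ is injective on $A$ and $\rho(g(\lambda,0))$ is hyperbolic for $\lambda\neq1$. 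For the parabolic type, $\rho(g(1,b))\in\ker\chi$ gives $\ell(\rho(g(1,b)))=0$, so it remains only to rule out that some nontrivial $\rho(g(1,b))$ is elliptic; I would do this with the contraction relation $\rho(g(\lambda,0))^{k}\rho(g(1,b))\rho(g(\lambda,0))^{-k}=\rho(g(1,\lambda^{2k}b))$, which together with continuity of $\rho|_N$ would propagate an interior fixed point of one parabolic to a bounded $\rho(N)$-orbit, excluded as in the previous step.

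Finally, uniqueness of $\xi$ follows the same pattern: a second fixed boundary point $\eta$ gives a $\rho(P)$-invariant geodesic on which $\chi|_N=0$ forces $\rho(N)$ to act trivially, producing an interior $\rho(N)$-fixed point and, via the Bruhat decomposition, contradicting non-elementarity. I expect the genuine obstacle to be exactly this recurring step, namely converting a would-be interior fixed point of $\rho(P)$ or $\rho(N)$ into a contradiction: amenability and CAT(-1) geometry cheaply deliver ``a fixed point somewhere'', but ruling out the bounded-orbit alternative is where non-elementarity, the connectedness of $G$, and the precise interplay of $A$, $N$ and $s$ must be used carefully; the remaining ingredients are continuity, compactness, and the explicit relations in $P$.
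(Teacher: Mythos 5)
Your outline follows the same strategy as the proofs this paper actually relies on: the paper gives no argument of its own here, deferring to Proposition 5.1 of \cite{monod2018notes} and Proposition 2.1 of \cite{monod2014exotic}, whose scheme is exactly yours (compact subgroups and circumcenters for elliptic elements, amenability of $P$ for a boundary fixed point, the Busemann character via \cref{kerneldec}, and the relation $g(\lambda,0)g(1,b)g(\lambda,0)^{-1}=g(1,\lambda^{2}b)$). Two remarks on the parts you left open. The step you flag as the main obstacle --- excluding an interior fixed point, equivalently a bounded orbit, of $\rho(P)$ --- in fact has a one-line completion from the decomposition in \cref{transtividad} that you yourself quote: if $\rho(P)x_{0}=x_{0}$, then any $g=psq$ with $p,q\in P$ satisfies $\rho(g)x_{0}=\rho(p)\rho(s)x_{0}$, whence $d(\rho(g)x_{0},x_{0})=d(\rho(p)\rho(s)x_{0},\rho(p)x_{0})=d(\rho(s)x_{0},x_{0})$; so the whole orbit $\rho(G)x_{0}$ lies in a closed ball, its circumcenter is fixed by $\rho(G)$, and non-elementarity is contradicted (a merely bounded $\rho(P)$-orbit reduces to this case by taking the circumcenter of that orbit). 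Separately, the Adams--Ballmann theorem is proved for locally compact Hadamard spaces and its extensions require finite telescopic dimension, so it does not apply verbatim to $\mathbf{H}^\infty_\F$; what you should invoke instead is the classification of isometric actions on Gromov hyperbolic spaces (available without properness, e.g.\ in \cite{das2017geometry}): an action of general type produces free subgroups by ping-pong, which the solvable group $P$ does not contain, so the action of $\rho(P)$ is elementary, and that is all you need.

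The genuine gap is your treatment of parabolic elements. If $\rho(g(1,b))$ is elliptic with fixed point $x$, the relation $\rho(g(\lambda,0))^{k}\rho(g(1,b))\rho(g(\lambda,0))^{-k}=\rho(g(1,\lambda^{2k}b))$ only shows that $\rho(g(1,\lambda^{2k}b))$ fixes $\rho(g(\lambda,0))^{k}x$ --- a point that escapes with $k$ --- and continuity of $\rho|_{N}$ controls $d(\rho(g(1,b'))x,x)$ only as $b'\to0$, not as $b'\to\infty$; so no single point acquires a bounded $\rho(N)$-orbit, and the proposed propagation fails. Worse, even a bounded $\rho(N)$-orbit could not be ``excluded as in the previous step'': the Bruhat argument requires a bounded orbit of all of $\rho(P)$, and $\rho(P)x=\rho(A)\rho(N)x$ is unbounded as soon as $\rho(A)$ contains a hyperbolic isometry. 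Note that the contraction relation propagates fixed points only downwards, from $A$ to $N$: if $\rho(A)x_{0}=x_{0}$, then $d(\rho(g(1,b))x_{0},x_{0})=d(\rho(g(1,\lambda^{-2}b))x_{0},x_{0})\to0$ as $\lambda\to\infty$, so $\rho(N)x_{0}=x_{0}$. This does rescue your ``$\chi\equiv0$'' case (split according to whether $\rho(s)\xi=\xi$, in which case $\rho(G)$ fixes $\xi$, or not, in which case $\rho(A)$ fixes the geodesic between the two points pointwise and the downward contraction plus the Bruhat argument finishes), but it cannot rule out an elliptic image of a parabolic: that exclusion is the technical heart of the proofs of Monod and Monod--Py and needs a genuinely different argument. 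Your uniqueness step inherits the same defect (an interior fixed point of $\rho(N)$ plus the Bruhat decomposition is not, by itself, a contradiction), but there the repair is immediate: once parabolics are known to map to parabolics, a second fixed point of $\rho(P)$ at infinity would force the parabolic isometries $\rho(g(1,b))$ to fix a geodesic pointwise, and parabolic isometries have no interior fixed points at all.
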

	
	In \cite{monod2018notes} the author developed a Gelfand-Naimark-Segal type of construction for actions by isometries on complex  hyperbolic spaces. This construction is the key idea behind the main result of this paper. 
	
	Given $x,y,z\in \mathbf{H}_\C^m$, the \textit{Cartan argument} of  $(x,y,z)$ is defined as $$\Cart(x,y,z)=\Arg \big(B(\tilde{x},\tilde{y}),B(\tilde{y},\tilde{z})B(\tilde{z},\tilde{x})\big),$$ 
	where $\tilde{x},\tilde{y}$ and $\tilde{z}$ are any  lifts of $x,y$ and $z$. This definition can be extended for triples of distinct points in $\partial \mathbf{H}_\C^m.$
	
	The map $$ \mathbf{H}_\C^m\times  \mathbf{H}_\C^m\times \mathbf{H}_\C^m\xrightarrow{\Cart}\R$$ is an alternating 2-cocycle and its image is contained in $(-\frac{\pi}{2},\frac{\pi}{2})$. In the complex hyperbolic space this invariant for three points plays a very important role. For further reading see   \cite{burgeriozziboundedcohomology}, \cite{complexhyperbolic} and  \cite{monod2018notes}.
	
	A set $X\subset \mathbf{H}_\C^m$ is contained in a real hyperbolic subspace if, and only if, for every $x,y,z\in X$, $\Cart(x,y,z)=0$ (see Lemma 2.1 in \cite{burgeriozziboundedcohomology}). 
	A set $X\subset \mathbf{H}_\F^m$ is called \textit{total} if there is not a proper and  closed $\F$-vector space that contains the lifts of $X$. 
	
	Following \cite{monod2018notes}, a pair $(\alpha, \beta)$ is called a \textit{ G-invariant kernel of hyperbolic type } defined on a topological group $G$, if 
	$$\alpha:G^3\rightarrow\left(-\frac{\pi}{2},\frac{\pi}{2}\right)$$ is a continuous $G$-invariant (with respect to the diagonal action) alternating 2-cocycle, 
	$$\beta: G\rightarrow \R_{>0}$$ is a continuous  function, symmetric  with respect to the inversion of the group, such that $\beta(e)=1$ and such that 
	the map 
	$$(g,k)\mapsto\beta(g)\beta (k)-e^{-i\alpha(g,k,e)}\beta(g k^{-1})$$ is a kernel of positive type. See chapter II C of \cite{propertyTbekka} for the definition and some properties of the kernels of positive type.  
	
	The next result is Theorem 1.11 of \cite{monod2018notes}. 
	\begin{teo}\label{gns}
		The pair $(\alpha, \beta)$ is a $G$-invariant kernel of hyperbolic type if, and only if, there exist, up to a conjugation by an isometry of $\mathbf{H}_\C^m$, a unique  representation $G\xrightarrow{\rho}\iso(\mathbf{H}^m_\C)_o$  and $p\in\mathbf{H}^m_\C$ such that the orbit of $p$ is total and 
		$$\beta(g)=coshd(\rho(g)p,p)$$ and 
		$$\alpha(g_1,g_2,g_3)=\Cart\big(\rho(g_1)p,\rho(g_2)p,\rho(g_3)p\big).$$
		Moreover $\beta $ and $\alpha $ are continuous if, and only if, $\rho$ is orbitally continuous. 
	\end{teo}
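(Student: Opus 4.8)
The plan is to prove the two implications separately. The direction ``representation $\Rightarrow$ kernel'' is essentially a computation once the right lifts are chosen, while the converse ``kernel $\Rightarrow$ representation'' is the genuine GNS-type reconstruction and carries all the difficulty, including the uniqueness.

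For the forward direction I would start from $\rho$ and a point $p$ with total orbit and set $\beta(g)=\cosh d(\rho(g)p,p)$ and $\alpha(g_1,g_2,g_3)=\Cart(\rho(g_1)p,\rho(g_2)p,\rho(g_3)p)$. That $\alpha$ is a continuous alternating $2$-cocycle valued in $(-\frac\pi2,\frac\pi2)$ is inherited from the properties of $\Cart$ recalled above, its $G$-invariance being the $\iso$-invariance of $\Cart$ applied through $\rho$; symmetry of $\beta$ under inversion and $\beta(e)=1$ are immediate. The one substantive point is positivity of the hypothesis kernel $\Phi$. Here I would fix a normalized lift $e$ of $p$ with $B(e,e)=1$, choose normalized lifts $\tilde x_g$ of $x_g:=\rho(g)p$ with $B(\tilde x_g,e)\in\R_{>0}$, and decompose $\tilde x_g=\beta(g)e+v_g$ with $v_g$ in the negative-definite orthogonal complement of $e$, a genuine Hilbert space for $-B$. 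Since $|B(\tilde x_g,\tilde x_k)|=\cosh d(x_g,x_k)$ and, by the phase normalization, $\Arg B(\tilde x_g,\tilde x_k)$ equals the Cartan argument of the triple $(x_g,x_k,p)$, a direct computation identifies $\Phi(g,k)$ with the inner product $\langle v_g,v_k\rangle=-B(v_g,v_k)$. Thus $\Phi$ is the Gram matrix of the family $(v_g)$ in one Hilbert space, hence of positive type.

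For the converse I would run the classical GNS construction on $\Phi$ to obtain a complex Hilbert space $\mathcal H_0$ and vectors $v_g$ with $\langle v_g,v_k\rangle=\Phi(g,k)$ and $\overline{\mathrm{span}}\{v_g\}=\mathcal H_0$, then form $H=\C e\oplus\mathcal H_0$ with $B(ae+v,a'e+v')=a\overline{a'}-\langle v,v'\rangle$. This form is strongly non-degenerate of signature $(1,\dim\mathcal H_0)$ precisely because $\mathcal H_0$ is a genuine Hilbert space, so $\mathbf H^m_\C$ is complete. Putting $\tilde x_g=\beta(g)e+v_g$, the normalization $B(\tilde x_g,\tilde x_g)=\beta(g)^2-\Phi(g,g)=1$ holds since $\Phi(g,g)=\beta(g)^2-1$, itself a consequence of $\alpha(g,g,e)=0$ and $\beta(e)=1$; hence $x_g:=[\tilde x_g]\in\mathbf H^m_\C$ and $x_e=[e]=:p$. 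Recovering $\beta$ is immediate from $|B(\tilde x_g,e)|=\beta(g)$, and recovering $\alpha$ on the base triples $(x_g,x_k,p)$ is built into the phase of $\Phi$; the cocycle identity for both $\Cart$ and $\alpha$ then propagates this to all triples, giving $\Cart(x_{g_1},x_{g_2},x_{g_3})=\alpha(g_1,g_2,g_3)$. Totality of the orbit holds because $x_e=e$ and $v_g=\tilde x_g-\beta(g)e$ together span $H$.

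The hard part, and the real content, will be producing the isometric action: I must realize the translation $x_g\mapsto x_{ag}$ as an honest $B$-isometry $\rho(a)$. This translation preserves all distances, because $\cosh d(x_g,x_k)$ is a function of $g^{-1}k$ alone, and all Cartan arguments, by the $G$-invariance of $\alpha$; the only obstruction is one of phases. I would seek a linear map $\rho(a)\tilde x_g=c_{a,g}\tilde x_{ag}$ with $|c_{a,g}|=1$, which is $B$-preserving exactly when $\arg c_{a,g}-\arg c_{a,k}$ equals the difference $\alpha(g,k,e)-\alpha(ag,ak,e)$. The key computation is that, by the $2$-cocycle identity applied to $(ag,ak,a,e)$ together with $\alpha(ag,ak,a)=\alpha(g,k,e)$, this difference splits as $f_a(g)-f_a(k)$ with $f_a(g)=-\alpha(ag,a,e)$; hence the phases can be chosen coherently and $\rho(a)$ is well defined and $B$-isometric on the total span. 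I would then verify the homomorphism property $\rho(a)\rho(b)=\rho(ab)$ by an analogous phase bookkeeping, extend each $\rho(a)$ to a bounded operator on $H$ using strong non-degeneracy, and projectivize into $\iso_o(\mathbf H^m_\C)$, with $p$ having total orbit by construction. This coherent lifting of phases, forced by the cocycle relation, is the step I expect to be most delicate. Finally, uniqueness up to conjugation is the same rigidity statement: two realizations $(\rho,p)$, $(\rho',p')$ of $(\alpha,\beta)$ give orbits with identical distances and Cartan arguments, so the same phase-matching produces a $B$-isometry $\tilde x_g\mapsto\tilde x'_g$, i.e.\ an isometry $T$ with $T\rho(g)T^{-1}=\rho'(g)$ on the total orbit and hence everywhere; and the continuity addendum follows since $\beta,\alpha$ are continuous functions of the orbit through $d$ and $\Cart$, while conversely continuity of $\beta,\alpha$ pins down the positions $x_g$ continuously in the metric topology.
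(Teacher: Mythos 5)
The paper does not actually prove this statement: it is quoted as Theorem 1.11 of \cite{monod2018notes}, and the only internal trace of its proof is the remark immediately after it, namely that the isometries $\rho(g)$ arise from linear maps $T_g$ composing projectively via $T_gT_l=e^{i\alpha(gl,g,e)}T_{gl}$. Your GNS-type reconstruction is correct and follows essentially that same route: your phase-coherence step (the cocycle identity applied to $(ag,ak,a,e)$ together with $G$-invariance, yielding $f_a(g)=-\alpha(ag,a,e)$) is exactly the mechanism that produces those projective lifts, and your treatment of the forward direction, uniqueness, and the continuity addendum matches the standard argument of the cited source.
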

Recall from  the proof of the previous theorem that for every $g\in G$, $\rho(g)$ is the isometry induced by a linear map   $T_g$ preserving a sesquilinear form of signature $(1,\infty)$ defined on a Hilbert space. These linear maps are such that for every $g,l\in G$, 
$$T_gT_l=e^{i\alpha(gl,g,e)}T_{gl}. $$  
	
	\begin{prop}\label{levantamientolinealsiescofrontera}
		Let $G\xrightarrow{\rho}\iso(\mathbf{H}^m_\C)_o$ be a representation and  suppose $x\in \mathbf{H}^m_\C$ is a point with total orbit. If there exists $\omega$, an alternating $G$-invariant 1-cochain,  such that  $\partial\omega=\alpha$, where $\alpha$ is the 2-cocycle associated to $x$,  then 
		$\rho$ admits a lift to a representation $G\xrightarrow{\tilde{\rho}}U(1,m)$. 
	\end{prop}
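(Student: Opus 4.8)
The plan is to read the statement as a lifting problem for a projective representation. By \cref{gns} and the remark following it, the isometries $\rho(g)$ are induced by linear maps $T_g\in O_\C(B)=U(1,m)$ satisfying
$$T_g T_l = e^{i\alpha(gl,g,e)} T_{gl}, \qquad g,l\in G,$$
so that $g\mapsto T_g$ is a homomorphism only up to the $U(1)$-valued multiplier $c(g,l)=e^{i\alpha(gl,g,e)}$. I would produce the lift $\tilde{\rho}$ by rescaling each $T_g$ by a unit scalar $\phi(g)\in U(1)$ chosen so that the multiplier disappears: setting $\tilde{\rho}(g)=\phi(g)T_g$, the multiplicativity $\tilde{\rho}(g)\tilde{\rho}(l)=\tilde{\rho}(gl)$ is equivalent to
$$\phi(g)\phi(l)\, e^{i\alpha(gl,g,e)} = \phi(gl).$$
Thus everything reduces to solving this single equation for $\phi$, which is exactly the assertion that the $U(1)$-cocycle $c$ is a coboundary.

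The hypothesis $\partial\omega=\alpha$ is what furnishes the trivialization, and the first concrete step is to rewrite the multiplier at the level of the real-valued cochains. Evaluating the homogeneous coboundary at $(gl,g,e)$ gives
$$\alpha(gl,g,e) = (\partial\omega)(gl,g,e) = \omega(g,e) - \omega(gl,e) + \omega(gl,g),$$
and since $\omega$ is $G$-invariant one has $\omega(gl,g)=\omega(l,e)$ (apply $g^{-1}$), whence
$$\alpha(gl,g,e) = \omega(g,e) + \omega(l,e) - \omega(gl,e).$$

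With this identity in hand, the choice $\phi(g)=e^{-i\omega(g,e)}$ solves the rescaling equation on the nose, since
$$\phi(g)\phi(l)\,e^{i\alpha(gl,g,e)} = e^{-i\omega(g,e)-i\omega(l,e)}\,e^{\,i(\omega(g,e)+\omega(l,e)-\omega(gl,e))} = e^{-i\omega(gl,e)} = \phi(gl).$$
I would then set $\tilde{\rho}(g)=e^{-i\omega(g,e)}T_g$ and verify the three required properties: it is a homomorphism (by the displayed computation); it takes values in $U(1,m)$, because multiplying a $B$-preserving map by a scalar of modulus $1$ again preserves the sesquilinear form $B$; and it is a lift of $\rho$, since $\tilde{\rho}(g)$ differs from $T_g$ only by a scalar and hence projects to the same isometry $\rho(g)$. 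Continuity of $\tilde{\rho}$ is then inherited from that of $\rho$ together with the continuity of $g\mapsto\omega(g,e)$.

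The only genuine subtlety, and the step I would carry out with the most care, is the cohomological bookkeeping of the second paragraph: one must correctly pass between the homogeneous, $G$-invariant, alternating cochain $\omega$ and the inhomogeneous $U(1)$-valued multiplier attached to the family $(T_g)$, keeping track of sign and coboundary conventions. Conceptually this is the standard fact that the obstruction to lifting a projective representation to a linear one lives in $H^2(G;U(1))$, and that $\partial\omega=\alpha$ provides precisely the primitive needed to annihilate that class. Because $\alpha$ and $\omega$ are genuinely $\R$-valued and the equality holds in $\R$ rather than merely modulo $2\pi$, exponentiating yields an exact trivialization with no residual periodicity obstruction.
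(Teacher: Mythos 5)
Your proposal is correct and follows essentially the same route as the paper: both use the multiplier relation $T_gT_l=e^{i\alpha(gl,g,e)}T_{gl}$ from the remark after \cref{gns}, rescale by the same scalar $e^{-i\omega(g,e)}$, and rely on the same identity $\alpha(gh,g,e)=\omega(g,e)-\omega(gh,e)+\omega(h,e)$ obtained from $\partial\omega=\alpha$ together with $G$-invariance. Your write-up is in fact slightly more explicit than the paper's (spelling out the invariance step $\omega(gh,g)=\omega(h,e)$ and the absence of a mod-$2\pi$ obstruction), but the argument is the same.
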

	\begin{proof} 
		Let $T_g$ the map defined in the proof of \cref{gns}. Define $T'_g=e^{-i\omega(g,e)}T_g.$
		Observe that on one side $$\begin{array}{rcl}
			T'_gT'_h&=&e^{-i\omega(g,e)}e^{-i\omega(h,e)}T_gT_h\\
			&=&e^{-i\omega(g,e)}e^{-\omega(h,e)}e^{i\alpha(gh,g,e)}T_{gh}\\
			&=&e^{-i\omega(g,e)}e^{-i\omega(h,e)}e^{i\alpha(gh,g,e)}e^{i\omega(gh,e)}T'_{gh},
		\end{array}$$
		and that on the other side, 
		$$\alpha(gh,g,e)=\omega(g,e)-\omega(gh,e)+\omega(h,e).$$
		Therefore the map $g\mapsto T'_g$ is a homomorphism. 
	\end{proof}
	\begin{prop}\label{fijaunpuntoeslineal}
		Let $G\xrightarrow{\rho}\iso(\mathbf{H}^m_\C)_o$ be a representation and let $x\in\mathbf{H}^m_\C$. If $\rho$  fixes a point $y\in\partial \mathbf{H}^m_\C$, then there exists $\omega$, an alternating $G$-invariant $1$-cochain such that $\partial\omega=\alpha$, where $\alpha$ is  the 2-cocycle associated  to $x$. 
	\end{prop}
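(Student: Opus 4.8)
The plan is to build the required primitive of the cocycle geometrically, using the fixed boundary point $y$ as a basepoint for the Cartan cocycle. Concretely, I would set
$$\omega(g_1,g_2)=\Cart\big(\rho(g_1)x,\rho(g_2)x,y\big),$$
where $\Cart$ denotes the Cartan argument extended to a triple consisting of two interior points and one boundary point. Since the Cartan argument is fully alternating in its three arguments, $\omega$ is alternating. Its $G$-invariance follows at once from the invariance of $\Cart$ under $\iso(\mathbf{H}^m_\C)_o$ together with the hypothesis $\rho(k)y=y$: for every $k\in G$ one has $\rho(k)\in\iso(\mathbf{H}^m_\C)_o$, hence
$$\omega(kg_1,kg_2)=\Cart\big(\rho(k)\rho(g_1)x,\rho(k)\rho(g_2)x,\rho(k)y\big)=\omega(g_1,g_2).$$

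The coboundary computation is then a direct application of the cocycle identity $\partial\Cart=0$. With the sign convention used in the proof of \cref{levantamientolinealsiescofrontera}, namely $\partial\omega(g_1,g_2,g_3)=\omega(g_2,g_3)-\omega(g_1,g_3)+\omega(g_1,g_2)$, I would apply the vanishing of the Cartan $2$-cocycle to the four points $\rho(g_1)x,\rho(g_2)x,\rho(g_3)x,y$, which reads
$$\Cart\big(\rho(g_2)x,\rho(g_3)x,y\big)-\Cart\big(\rho(g_1)x,\rho(g_3)x,y\big)+\Cart\big(\rho(g_1)x,\rho(g_2)x,y\big)=\Cart\big(\rho(g_1)x,\rho(g_2)x,\rho(g_3)x\big).$$
The left-hand side is exactly $\partial\omega(g_1,g_2,g_3)$ and the right-hand side is $\alpha(g_1,g_2,g_3)$, so $\partial\omega=\alpha$, as required.

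The step that needs genuine care — and the main obstacle — is the passage to a boundary vertex: both the \emph{definition} of $\Cart$ on a mixed (two interior, one ideal) triple and the \emph{cocycle identity} above must be justified, since the excerpt only records these for interior points and for triples of distinct ideal points. I would handle this by a limiting argument that reduces everything to the interior case. First, if $\eta$ is an isotropic representative of $y$ and $\tilde p$ is a timelike lift of an interior point $p$, then $B(\tilde p,\eta)\neq 0$ (in signature $(1,m)$ a timelike vector is orthogonal to no nonzero null vector), so the triple product defining $\Cart(p,q,y)$ is nonzero; this makes $\omega$ well defined, and continuous in $(g_1,g_2)$ by orbital continuity of $\rho$ and continuity of the argument of a nonvanishing triple product. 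Second, choosing interior points $d'\to y$ with lifts $\tilde{d'}\to\eta$, continuity of the triple product gives $\Cart(p,q,d')\to\Cart(p,q,y)$, and letting $d'\to y$ in the four-interior-point identity $\partial\Cart=0$ yields the mixed identity invoked above. Both claims are finite-dimensional, since any finite configuration of points lies in a finite-dimensional complex hyperbolic subspace, so they may be checked there and transported back. (The degenerate case $\rho(g_1)x=\rho(g_2)x$ causes no trouble: the relevant triple product is then a positive real, so $\omega$ extends by $0$, consistent with its being alternating.)
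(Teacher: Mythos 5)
Your proposal is correct, and it is built on exactly the same primitive as the paper's proof: your $\omega(g_1,g_2)=\Cart\big(\rho(g_1)x,\rho(g_2)x,y\big)$ coincides (via the even cyclic permutation) with the paper's $\omega(g,l)=\Cart\big(y,\rho(g)x,\rho(l)x\big)$, and your alternation/invariance arguments are the same. Where you genuinely diverge is in the verification of $\partial\omega=\alpha$. The paper never extends the four-point cocycle identity to an ideal vertex: it chooses lifts $\tilde x,\tilde y$ with $B(\tilde x,\tilde y)>0$ and linear representatives normalized so that $\tilde\rho(g)\tilde y=\theta_g\tilde y$ with $\theta_g>0$, computes the closed formula $\omega(l,k)=\Arg\big(B(\tilde\rho(l)\tilde x,\tilde\rho(k)\tilde x)\big)$, and then deduces the coboundary identity from the range bounds ($|\Cart|<\pi/2$ for interior triples, $\le\pi/2$ for mixed ones), which exclude any $2\pi$ wraparound when three arguments are summed against the argument of the product. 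You instead keep $\omega$ in its geometric form and obtain the mixed cocycle identity as a limit $d'\to y$ of the interior identity $\partial\Cart=0$, using that $B(\tilde p,\eta)\neq 0$ for $\tilde p$ timelike and $\eta\neq 0$ isotropic. Both routes are sound. Your approach is lift-free and reduces everything to the cited facts about interior triples plus a finite-dimensional reduction; its one delicate point, which you gloss over, is that $\Arg$ is continuous only off the negative real axis, so ``continuity of the argument of a nonvanishing product'' is not automatic --- it is saved here because interior triple products lie in the open right half-plane (the image of $\Cart$ is in $(-\pi/2,\pi/2)$), hence the limiting product lies in the closed right half-plane, away from the branch cut; this point deserves to be said explicitly. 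The paper's computation, by contrast, buys an explicit expression of $\omega$ as the argument of a single pairing, so that $\partial\omega=\alpha$ becomes a statement about three complex numbers with no limiting process, and it sits naturally next to the normalization $T'_g=e^{-i\omega(g,e)}T_g$ used in \cref{levantamientolinealsiescofrontera} to produce the linear lift.
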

	\begin{proof} The continuity arguments will not be discussed, but it will be clear from the arguments used that they can be deduced. Define $\omega(g,l)=\Cart(y,\rho(g)x,\rho(l)x).$ Choose  $\tilde{x}$ a lift of $x$ and $\tilde{y}$ a lift of $y$ such that $B(\tilde{x},\tilde{y})>0$. Let $\tilde{\rho}$ be a linear lift of $\rho$ such that for every $g\in G$,    $\tilde{\rho}(g)(\tilde{y})=\theta_g\tilde{y}$, with $\theta_g>0$. Thus, 
		$$\begin{array}{rcl}
			\Cart(y,\rho(l)x,\rho(k)x)&=&	\\
			\Cart(\rho(l^{-1})y,x,\rho(l^{-1})\rho(k)x)&=&\\
			\Arg\Big(B(\theta_{l^{-1}}\tilde{y},\tilde{x})B\big(\tilde{x}, \tilde{\rho}({l^{-1}}{k})\tilde{x}\big)B\big(\tilde{\rho}({l^{-1}}{k})\tilde{x},\theta_{l^{-1}}\tilde{y}\big)\Big)&=&\\
			
			\Arg\Big(B(\tilde{x}, \tilde{\rho}(l^{-1}{k})\tilde{x})B(\tilde{x},\tilde{\rho}(k^{-1}{l})\tilde{y})\Big)&=&\\
			\Arg(B(\tilde{\rho}(l)\tilde{x}, \tilde{\rho}({k})\tilde{x}))	. 	
		\end{array}$$
		For every $x_1,x_2,x_3\in\mathbf{H}^m_\C$, $$|\Cart(x,y,z)|<\pi/2$$ and for $y\in\partial\mathbf{H}^m_\C,$ $$|\Cart(y,x_1,x_2)|\leq\pi/2.$$ Therefore, 
		
		$$ \begin{array}{rcl}
			
			\Arg(B(\tilde{\rho}(l)\tilde{x}, \tilde{\rho}({k})\tilde{x}))	-	\Arg(B(\tilde{\rho}(g)\tilde{x}, {\tilde{\rho}(k)}\tilde{x}))+
			\Arg(B(\tilde{\rho}(g)\tilde{x}, \tilde{\rho}({l})\tilde{x}))	
			&=&\\
			\Arg\Big({B\big(\tilde{\rho}(l)\tilde{x}, \tilde{\rho}({k})\tilde{x}\big){B\big( \tilde{\rho}({k})\tilde{x},\tilde{\rho}(g)\tilde{x}\big)}B\big(\tilde{\rho}(g)\tilde{x}, \tilde{\rho}({l})\tilde{x}\big)}\Big). 
		\end{array}$$
In other words, 
		$$\begin{array}{rcl}
			\partial \omega(g,l,k)&=&\\
			\Cart(y,\rho(l)x,\rho(k)x)-\Cart(y,\rho(g)x,\rho(k)x)+\Cart(y,\rho(g)x,\rho(l)x)&=&\\\alpha(g,l,k)	. 
		\end{array}$$

	\end{proof}
	The following corollary is a consequence of  propositions  \ref{levantamientolinealsiescofrontera} and    \ref{fijaunpuntoeslineal}.
	\begin{cor}\label{ellevantamientoescontinuo}
		Let $G\xrightarrow{\rho}\iso(\mathbf{H}^m_\C)_o$ be a representation and suppose  $x\in \mathbf{H}^m_\C $ has a total orbit. If $\rho$ fixes a point  at infinity, then $\rho$ admits an orbitally continuous  lift $\tilde{\rho}$ to $U(1,m)$. 
	\end{cor}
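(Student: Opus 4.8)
The plan is to combine the two preceding propositions and then add only the verification of orbital continuity. First I would invoke \cref{fijaunpuntoeslineal}: since $\rho$ fixes a point $y\in\partial\mathbf{H}^m_\C$ at infinity, there is an alternating $G$-invariant $1$-cochain $\omega$ with $\partial\omega=\alpha$, where $\alpha$ is the $2$-cocycle associated to the point $x$ with total orbit. Explicitly, $\omega(g,l)=\Cart(y,\rho(g)x,\rho(l)x)$. With such an $\omega$ in hand and with $x$ having total orbit, the hypotheses of \cref{levantamientolinealsiescofrontera} are satisfied, so $\rho$ lifts to a genuine homomorphism $\tilde\rho\colon G\to U(1,m)$, given explicitly by $\tilde\rho(g)=e^{-i\omega(g,e)}T_g$, where the $T_g$ are the linear maps from the GNS construction of \cref{gns}. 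The purely algebraic assertion—that $\tilde\rho$ is a homomorphism lifting $\rho$—is thus already packaged in \cref{levantamientolinealsiescofrontera} and needs no further work.

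It remains to establish orbital continuity of $\tilde\rho$, which is the only content not contained verbatim in the two propositions. Recall from \cref{gns} that, because $\rho$ is (orbitally) continuous, the data $\alpha$ and $\beta$ are continuous; in particular the kernel of positive type $(g,k)\mapsto\beta(g)\beta(k)-e^{-i\alpha(g,k,e)}\beta(gk^{-1})$ is continuous. The GNS Hilbert space is generated by the orbit vectors $v_g$ attached to $\rho(g)x$, and their pairwise inner products are continuous expressions in $\alpha$ and $\beta$; hence $\|v_g-v_{g_0}\|^2$ is a continuous function of $g$ vanishing at $g=g_0$, so $g\mapsto v_g$ is continuous and the maps $T_g$ act continuously on this total set. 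Moreover the scalar $\omega(g,e)=\Cart(y,\rho(g)x,x)$ is continuous in $g$, being the composite of the continuous orbit map $g\mapsto\rho(g)x$ with the continuous Cartan argument.

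Combining these observations, $g\mapsto\tilde\rho(g)v_h=e^{-i\omega(g,e)}T_g v_h$ is continuous for every orbit vector $v_h$, hence on the dense linear span of the orbit vectors. Since each $\tilde\rho(g)$ is unitary—so the family is uniformly bounded—strong continuity on this dense, total set extends to the whole Hilbert space, yielding orbital continuity of $\tilde\rho$. The step I expect to be the main obstacle is precisely this continuity bookkeeping: extracting continuity of $g\mapsto T_g$ on the orbit vectors directly from the continuity of $\alpha$ and $\beta$ furnished by \cref{gns}, and then upgrading strong continuity on the total set to strong continuity everywhere via the unitarity (uniform boundedness) of the $\tilde\rho(g)$.
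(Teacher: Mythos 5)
Your reduction is exactly the paper's: the paper proves this corollary by simply combining \cref{fijaunpuntoeslineal} (producing $\omega(g,l)=\Cart(y,\rho(g)x,\rho(l)x)$ with $\partial\omega=\alpha$) with \cref{levantamientolinealsiescofrontera} (producing the homomorphic lift $\tilde\rho(g)=e^{-i\omega(g,e)}T_g$), and it leaves the continuity implicit --- indeed the proof of \cref{fijaunpuntoeslineal} explicitly defers all continuity arguments. So the structural part of your proposal is fine, and the only content you add beyond the paper is the continuity bookkeeping.

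That added part, however, fails at precisely the step you single out as the crux. The operators $\tilde\rho(g)$ lie in $U(1,m)$: they preserve the \emph{indefinite} form $B$ of signature $(1,m)$, not the positive-definite inner product of the ambient Hilbert space, so they are not unitary, and the family $\{\tilde\rho(g)\}_{g\in G}$ is not uniformly bounded in operator norm. (For a hyperbolic element the norm grows like $e^{\ell(\rho(g))}$; think of $\mathrm{diag}(\lambda,\lambda^{-1})$ acting on $\C^2$ with the form $B(\xi_1,\xi_2)=1$, $B(\xi_i,\xi_i)=0$.) Hence ``strong continuity on a total set plus uniform boundedness'' does not apply as stated. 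The argument is repairable, because continuity at a given $g_0$ only requires \emph{local} uniform boundedness: for $T\in U(1,m)$ and a fixed reference point $[v_0]\in\mathbf{H}^m_\C$ with $B(v_0,v_0)=1$, writing $Tv_0$ and $T^{-1}v_0$ in the decomposition $\C v_0\oplus v_0^{\perp}$ and applying Cauchy--Schwarz on the negative-definite part $v_0^\perp$ gives an estimate of the form $\lVert T\rVert_{\mathrm{op}}\leq C\cosh d([Tv_0],[v_0])$ with $C$ universal. Since $\rho$ is orbitally continuous, $g\mapsto \cosh d(\rho(g)[v_0],[v_0])$ is locally bounded, and the phase $e^{-i\omega(g,e)}$ has modulus one, so $\lVert\tilde\rho(g)\rVert_{\mathrm{op}}$ is bounded on a neighborhood of each $g_0$. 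With ``unitary, hence uniformly bounded'' replaced by this local bound, the rest of your extension argument from the total set of orbit vectors goes through.
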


	\subsection{Non-elementary representations of $\iso(\mathbf{H}^1_\C)_o$}
	The technique  of studying non-elementary representations  through their restrictions to stabilizers of points at infinity can be tracked back to \cite{burger2005equivariant} and \cite{monod2014exotic}. In particular the results of this subsection follow the ideas of the latter. 

In this subsection the notation and general properties of non-elementary representations of $\text{Isom}(\mathbf{H}^1_\C)_o$ into a the group of holomorphic isometries of a infinite-dimensional complex hyperbolic space will be discussed. 

	Fix $\rho$ an irreducible   representation of $\iso(\mathbf{H}_\C^1)_o$ into the group of holomorphic isometries of $\hi_\C$. With an abuse of notation, it  will  be supposed often that $\rho$ is  defined on $SU(1,1)$. 
	By \cref{preservaeltipo},  $\rho(P)$ fixes a unique point $y_1\in\partial\hi_C$. The subgroup  $\{g(\lambda,0)\}_{\lambda>0}$ is abelian and again by \cref{preservaeltipo}, there is a unique $y_2\in\partial\hi_C$ such that $y_1$ and $y_2$ are the extremes of the axis shared by every element $\rho(g(\lambda,0))$. 
	
	Without lost of generality, fix a representative  $\eta_1$ (resp.  $\eta_2$) of $y_1$ (resp. $y_2$) such that $B(\eta_1,\eta_2)=1. $ 
	By  \cref{ellevantamientoescontinuo}, $\rho|_P$ admits a lift into an homomorphism $P\rightarrow U(B).$  With a small abuse of notation, the name  $\rho$ will be kept for this lift. Up to a multiplication by a continuous  homomorphism $P\xrightarrow{}\mathbf{S}^1$, it is possible to suppose that $B(\rho(\lambda,b)\eta_1,\eta_2)>0$, for every $\lambda>0$ and $b\in\R$. 
	
The following proposition is inspired by  Proposition 2.3 and the arguments in p.8 of  \cite{monod2014exotic}. 
	\begin{prop}\label{representacionenbloques}
		There exists a continuous isomorphism  $\chi:\R_{>0}\rightarrow\R_{>0}$ such that for every  $g=g(\lambda,b)\in P$, $\rho(g)=\rho(\lambda,b)$ can be represented by the transformation
		$$\begin{pmatrix}
			\chi(\lambda)&-\chi(\lambda)|c(\lambda,b)|^2/2+i\Delta(\lambda,b)&-\chi(\lambda)\langle\pi(\lambda,b)({\boldsymbol{\cdot}}),c(\lambda,b)\rangle\\
			0&\chi(\lambda)^{-1}&0\\
			0&c(\lambda,b)&\pi(\lambda,d)\end{pmatrix},$$ 
		with respect to the decomposition  $\C\eta_1\oplus\C\eta_2\oplus (\eta_1^\perp\cap\eta_2^\perp )$, where $\langle\boldsymbol{\cdot},\boldsymbol{\cdot}\rangle$ is the restriction of $B$ to $\eta_1^\perp\cap\eta_2^\perp$,  $|c(\lambda,b)|^2=\langle c(\lambda,b),c(\lambda,b)\rangle$ and where   for every $\lambda>0$ and $b,d\in\R$, 
		\begin{enumerate}
			\item 
			$\Delta(\lambda,b)\in\R$ and the map $(\lambda,b)\mapsto\Delta(\lambda,b)$ is continuous.  
			\item  $c(\lambda,b)\in\eta_1^\perp\cap \eta_2^\perp$ and the map $g(\lambda,b)\mapsto c(\lambda,b)$ is continuous. 
			\item $\pi(\lambda,b)$ is a unitary map of $\eta_1^\perp\cap \eta_2^\perp$ and the map $g(\lambda,b)\mapsto \pi(\lambda,b)$ is a strongly continuous unitary representation. 
			\item   
			$c(1,b+d)=c(1,b)+\pi(1,b)c(1,d). $
			\item $\Delta(\lambda,0)=0$ and $c(\lambda,0)=0$.
			\item $\chi(\lambda)\pi(\lambda,0)c(1,b)=c(1,\lambda^2 b)$. 
			\item $\chi(\lambda)^{2}\Delta(1,b)=\Delta(1,\lambda^2 b)$. 
			\item $-\Delta(1,b)=\Delta(1,-b)$.
			\item $
			Im(\langle c(1,d),c(1,b) \rangle)=\Delta(1,d-b)-\Delta(1,d)+\Delta(1,b)$.
			\item $Re(\langle c(1,d), c(1,b)\rangle)=-\frac{|c(1,-b+d)|^2}{2}+\frac{|c(1,b)|^2}{2}+\frac{|c(1,d)|^2}{2}. $

		\end{enumerate}
	\end{prop}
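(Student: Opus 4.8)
The plan is to write the linear lift of $\rho|_P$ explicitly in the basis adapted to the two fixed boundary points $y_1,y_2$ and then read off all ten relations from the group law of $P$.

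First I would establish the block shape. By \cref{ellevantamientoescontinuo} the restriction $\rho|_P$ lifts to an honest, orbitally continuous homomorphism $P\to U(B)$, which I keep denoting $\rho$, writing $T_g=\rho(g)$; the point is that this is a genuine homomorphism, so no scalar cocycle intervenes in the products below. Since $\rho(P)$ fixes $y_1=[\eta_1]$, each $T_g$ preserves the isotropic line $\C\eta_1$, say $T_g\eta_1=a(g)\eta_1$, and being $B$-unitary it also preserves $\eta_1^\perp=\C\eta_1\oplus(\eta_1^\perp\cap\eta_2^\perp)$. Hence in the decomposition $\C\eta_1\oplus\C\eta_2\oplus(\eta_1^\perp\cap\eta_2^\perp)$ the matrix of $T_g$ has the shape
\[
\begin{pmatrix} a & m & w^* \\ 0 & d & 0 \\ 0 & c & \pi\end{pmatrix},
\]
with $a,d,m\in\C$, $c\in\eta_1^\perp\cap\eta_2^\perp$, $w^*$ a linear functional, and $\pi$ an endomorphism of $\eta_1^\perp\cap\eta_2^\perp$. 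The normalization $B(T_g\eta_1,\eta_2)>0$ gives $a(g)=B(T_g\eta_1,\eta_2)\in\R_{>0}$; multiplicativity of $g\mapsto a(g)$ together with \cref{kerneldec} (the parabolics $g(1,b)$ lie in the kernel of the Busemann character, so $a(g(1,b))=1$) shows $a(g(\lambda,b))$ depends only on $\lambda$, so I set $\chi(\lambda):=a(g(\lambda,0))$. Then $\chi$ is a continuous homomorphism $\R_{>0}\to\R_{>0}$, and it is nontrivial — hence an isomorphism — because $\rho$ preserves type (\cref{preservaeltipo}), so each $\rho(g(\lambda,0))$ with $\lambda\neq1$ is hyperbolic with $\ell=|\ln\chi(\lambda)|>0$ by \cref{kerneldec}. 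Imposing the unitarity identities $B(T_ge_i,T_ge_j)=B(e_i,e_j)$ on $\eta_1,\eta_2$ and on $\eta_1^\perp\cap\eta_2^\perp$ then forces, with $a=\chi(\lambda)$ real: $d=\chi(\lambda)^{-1}$; $\pi$ a linear isometry; $w^*=-\chi(\lambda)\langle\pi(\cdot),c\rangle$; and $\mathrm{Re}(m)=-\chi(\lambda)|c|^2/2$. Writing $\Delta(\lambda,b):=\mathrm{Im}(m)$, $c(\lambda,b):=c$, $\pi(\lambda,b):=\pi$ reproduces the asserted matrix.

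Next come the qualitative properties 1--3 and 5. Orbital continuity of the lift yields continuity of $\Delta$ and $c$ and strong continuity of $\pi$; invertibility of $T_g$ (its inverse $T_{g^{-1}}$ has the same shape) makes $\pi(\lambda,b)$ unitary rather than merely isometric, and since the $(3,3)$-block of a product of two such matrices equals the product of the $(3,3)$-blocks, $g\mapsto\pi(\lambda,b)$ is a representation. For property 5, when $\lambda\neq1$ the hyperbolic element $\rho(g(\lambda,0))$ fixes both $y_1$ and $y_2$, so $T_{g(\lambda,0)}$ preserves $\C\eta_2$ as well; its $(3,2)$-entry and the imaginary part of its $(1,2)$-entry therefore vanish, giving $c(\lambda,0)=0$ and $\Delta(\lambda,0)=0$ (trivially for $\lambda=1$).

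Finally I would extract relations 4 and 6--10 from $T_{g_1}T_{g_2}=T_{g_1g_2}$ using the product law $g(\lambda,b)g(\gamma,d)=g(\lambda\gamma,\gamma^{-1}b+\lambda d)$, with subscripts denoting evaluation at $g_1,g_2,g_3$. Comparing the $(3,2)$-entry of the block product gives the master identity
\[
c(\lambda\gamma,\gamma^{-1}b+\lambda d)=\chi(\gamma)^{-1}c(\lambda,b)+\pi(\lambda,b)\,c(\gamma,d),
\]
whose case $\lambda=\gamma=1$ is property 4, and whose specialization to the conjugation $g(\lambda,0)g(1,b)g(\lambda,0)^{-1}=g(1,\lambda^2b)$ (using $c(\lambda,0)=0$) is property 6. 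Comparing the $(1,2)$-entry and separating imaginary and real parts gives
\[
\Delta_3=\chi_1\Delta_2+\chi_2^{-1}\Delta_1-\chi_1\,\mathrm{Im}\langle\pi_1c_2,c_1\rangle
\]
together with a matching identity for $-\chi_3|c_3|^2/2$. Property 7 comes again from the conjugation $g(\lambda,0)g(1,b)g(\lambda,0)^{-1}$; properties 9 and 10 follow from the choice $g_1=g(1,b)$, $g_2=g(1,d-b)$ (so $g_1g_2=g(1,d)$), after rewriting the cross term via $\pi(1,b)c(1,d-b)=c(1,d)-c(1,b)$ (property 4); and property 8 follows from $g(1,b)^{-1}=g(1,-b)$ together with $\pi(1,b)c(1,-b)=-c(1,b)$ (property 4 with $d=-b$), which makes the corresponding cross term real. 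The genuinely delicate points are structural rather than computational — securing the honest-homomorphism lift so that no scalar cocycle corrupts the block identities, pinning down $\chi$ as a nontrivial (hence isomorphic) character via the Busemann character and type preservation, and checking that $T_g$ preserves $\C\eta_2$ for the hyperbolic generators — whereas relations 4--10, once these are in place, are a bookkeeping exercise in multiplying $3\times3$ block matrices and splitting into real and imaginary parts.
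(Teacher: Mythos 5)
Your proof is correct and follows essentially the same route as the paper's: the Busemann character pins down $\chi$ as a nontrivial (hence isomorphic) character, the fixed line $\C\eta_1$ plus $B$-unitarity of the honest lift from \cref{ellevantamientoescontinuo} gives the block shape, and relations 4--10 are read off from the product law of $P$ (conjugation by $g(\lambda,0)$ for 6--7, the identity $g(1,b)g(1,-b)=g(1,0)$ for 8, and the cocycle identity for 9--10). Your write-up is in fact more explicit than the paper's rather terse argument, filling in the unitarity computations and entry-by-entry comparisons that the paper only states as ``consequences.''
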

	\begin{proof}
		Let $c$ be the Busemann character associated to $\eta_1$.  By \cref{kerneldec} and the comments before it, for every $g\in P$, $\rho(g)\eta_1=\theta(g)\eta_1$, where $$c(\rho(g))=\ln(|\theta(g)|).$$
		
		Observe that $g(\lambda,b)=g(\lambda,0)	g(1,\lambda^{-1}b)$, therefore $$c(\rho(\lambda,b))=c(\rho(\lambda,0)).$$
		Thus,  the map $\lambda\mapsto e^{c(\rho(\lambda,0))}=\chi(\lambda)$ is a non-trivial  continuous  isomorphism $\R_{>0}\rightarrow\R_{>0}$ (see propositions  \ref{kerneldec} and  \ref{preservaeltipo}). 
		
		The points 1., 2., 3. and 4. are  consequences of: the  map $\rho$ is   a homomorphism to $U(B)$ which is orbitally continuous,  $\rho(P)$ preserves the line generated by $\eta_1$ and the comments before \cref{transtividad}. 
		
		For 5. observe that by construction $\Delta(\lambda,0)=0$ and $c(\lambda,0)=0$ because $\rho(\lambda,0)$ is hyperbolic and $\eta_1$ and $\eta_2$ are representatives of  the extremes of the axis preserved by it. 
		
		The points 6. and 7. are  consequences of the identity $$g(\lambda,0)g(1,b)g(\lambda^{-1},0)=g(1,\lambda^2b).$$
		
		For 8. observe that form  the identity  $g(1,b)g(1,-b)=g(1,0), $ $$\Delta(0)=\Delta(b)+\Delta(-b)-Im(\langle\pi(1,b)c(1,-b), c(1,b)\rangle)$$
		and by 4., $$Im(\langle\pi(1,b)c(1-b), c(b)\rangle)=-Im(\langle c(1,b),c(b)\rangle)=0.  $$ Therefore 
		$-\Delta(1,b)=\Delta(1,-b).$
		
		The points 9. and 10.  can be deduced from  4. and 8.  and the fact that $\rho$ is a homomorphism. Indeed, observe that $$\begin{array}{rcl}
			|c(1,-b+d)|^2&=&|c(1,b)|^2+|c(1,d)|^2+2 Re(\langle c(1,-b), \pi(1,-b)c(1,d)\rangle )\\
			&=&|c(1,b)|^2+|c(1,d)|^2-2 Re(\langle c(1,b), c(1,d)\rangle)
		\end{array}$$
		and that  $$\begin{array}{rcl}
			\Delta(1,d-b)    &=&\Delta(1,d)-\Delta(1,b)-Im(\langle \pi(1,-b)c(1,d),c(1,-b) \rangle) \\
			&=&\Delta(1,d)-\Delta(1,b)+Im(\langle c(1,d),c(1,b) \rangle).\end{array}$$
	\end{proof}
By 4., if $\pi$ and $c$ are restricted to $\{g(1,b)\}_{b\in\R}	$, then  $c$ is an affine cocycle associated to the representation $\pi$. In the rest of  the text, for an irreducible representation, the notation of the previous proposition is fixed. The conventions   $\Delta(1,b)=\Delta(b)$, $c(1,b)=c(b)$ and $\pi(1,b)=\pi(b)$ will be used from now on. 

	There will be often an abuse of notation:  $g(\lambda,b) $ (resp. $\rho(\lambda,b)$) will denote  either the isometry of $\mathbf{H}_\C^1$ (resp. $\hi_\C$) or   a representative in $SU(1,1,)$ (resp.  $U(1,\infty)). 
	$ It will be clear at all times what use of the notation is being made.  Also the symbols $\xi_i$ (resp. $\eta_i$) will be used either for the   points in $\partial\mathbf{H}_\C^1$ (resp.  $\partial\hi_\C$) or for their representatives. Again, this will not generate confusion.
	
	\begin{lem}\label{tesmenorque2}
		If $\chi(\lambda)=\lambda^t$, then $0<t\leq 2$.  
	\end{lem}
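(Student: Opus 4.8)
The plan is to read off from \cref{representacionenbloques} a single real-valued function of the horospherical data, namely $b\mapsto\|c(b)\|^{2}$ (the square of the Hilbert-space norm on $\eta_1^{\perp}\cap\eta_2^{\perp}$), and to show that it is both rigidly determined and subject to a positivity constraint. First I would compute it explicitly. Since $\pi(\lambda,0)$ is unitary (item~(3)) and $\chi(\lambda)=\lambda^{t}$, item~(6) gives $\|c(\lambda^{2}b)\|=\lambda^{t}\|c(b)\|$; taking $b=1$ and writing an arbitrary $\mu>0$ as $\lambda^{2}$ yields $\|c(\mu)\|^{2}=C\mu^{t}$ for $\mu>0$, where $C=\|c(1)\|^{2}$. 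Using $\|c(-b)\|=\|c(b)\|$ (which follows from the cocycle identity~(4) and unitarity together with $c(0)=0$), this extends to $\|c(b)\|^{2}=C\,|b|^{t}$ for all $b\in\R$.

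The crux is then a positivity input. By item~(4), the restriction of $c$ to the one-parameter subgroup $\{g(1,b)\}_{b\in\R}$ is a $1$-cocycle for the unitary representation $\pi$ on the genuine Hilbert space $\eta_1^{\perp}\cap\eta_2^{\perp}$, so the function $\psi(b)=\|c(b)\|^{2}$ is conditionally negative definite on $(\R,+)$. Assuming $c\not\equiv0$ we have $C>0$: continuity of $\psi$ (item~(2)) together with $\psi(0)=0$ then rules out $t<0$, giving $t>0$; and the classical characterization of the powers that are conditionally negative definite on $\R$ --- $|b|^{\alpha}$ is conditionally negative definite precisely when $0<\alpha\le 2$, equivalently $e^{-|b|^{\alpha}}$ is the characteristic function of a symmetric stable law iff $0<\alpha\le 2$ --- forces $t\le 2$. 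Hence $0<t\le 2$.

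It remains to dispose of the degenerate possibility $c\equiv0$, which is where I expect the only genuine subtlety, since there the positivity argument is vacuous. Here I would argue by hand. If moreover $\Delta\equiv0$, then $\rho(1,b)$ fixes both boundary points $[\eta_1]$ and $[\eta_2]$ and an interior point of their common axis, hence is elliptic, contradicting that $\rho$ preserves the parabolic type of $g(1,b)$ (\cref{preservaeltipo}); so $\Delta\not\equiv0$. With $c\equiv0$, item~(9) forces $\Delta(d-b)=\Delta(d)-\Delta(b)$, so by continuity (item~(1)) $\Delta(b)=kb$ with $k\ne0$; substituting into item~(7) gives $\lambda^{2t}kb=k\lambda^{2}b$ for all $\lambda,b$, whence $t=1\in(0,2]$. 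The main obstacle overall is thus the correct identification of $\psi$ as an honest conditionally negative definite function (which relies on the definiteness of $B$ on $\eta_1^{\perp}\cap\eta_2^{\perp}$ and the unitarity in item~(3)) together with supplying the sharp analytic fact that $|b|^{t}$ is conditionally negative definite exactly for $0<t\le 2$.
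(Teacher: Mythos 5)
Your argument is correct, and it splits the problem exactly as the paper does: the degenerate case $c\equiv 0$ is treated identically (type preservation via \cref{preservaeltipo} forces $\Delta\not\equiv 0$; item (9) of \cref{representacionenbloques} makes $\Delta$ additive, hence linear by continuity, and item (7) then forces $t=1$), as is the positivity $t>0$ (scaling plus continuity at $0$; note your phrasing literally excludes only $t<0$, but $t=0$ dies by the same limit $\mu\to 0^+$, which would force $C=0$, or simply because $\chi$ is an isomorphism). The genuine divergence is the bound $t\le 2$ when $c\not\equiv 0$. The paper extracts it from the cocycle identity at a single point: $c(2b)=c(b)+\pi(b)c(b)$ gives $2^{t/2}|c(b)|=|c(2b)|\le 2|c(b)|$ by the triangle inequality, done. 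You instead compute $|c(b)|^2=C|b|^t$ for all $b$ and invoke two pieces of classical machinery: that squared norms of $1$-cocycles of unitary representations are conditionally negative definite (sound here, since $B$ restricted to $\eta_1^\perp\cap\eta_2^\perp$ is positive definite and the complex case reduces to the real one via $\mathrm{Re}\langle\cdot,\cdot\rangle$), and the L\'evy--Schoenberg fact that $|b|^\alpha$ is conditionally negative definite on $\R$ only for $\alpha\le 2$. Both routes exploit the same identity (4) of \cref{representacionenbloques}; the paper's use of it is elementary and self-contained, while yours is heavier but more conceptual: it identifies the threshold $2$ as the stable-law exponent barrier, places the lemma within the positive-definite-kernel framework that underlies the constructions of \cite{monod2014exotic} and \cite{monod2018notes}, and yields the stronger intermediate statement $|c(b)|^2=C|b|^t$ along the way, at the cost of citing two nontrivial external results where the paper needs none.
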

	\begin{proof}
		If $b\in\R\setminus\{0\}$, the transformation $g(1,b)$ is parabolic, therefore  $c(b)\neq0$ or $\Delta(b)\neq0$.  	The maps $b\mapsto c(b)$ and $b\mapsto \Delta(b)$ are  continuous and such that  $c(0)=0$ and $\Delta(0)=0$. For every $\lambda>0$, $\lambda^t|c(b)|=|c(\lambda^2b)|$ and $\lambda^{2t}\Delta(b)=\Delta(\lambda^2b)$, thus   $t>0$.
		
		Observe that $c(2b)=c(b)+\pi(b)c(b)$, therefore $2^{\frac{t}{2}}|c(b)|\leq 2|c(b)|$. Thus, if $c(b)\neq0$ for some (every) $b$, then $t\leq 2$. If this is not the case, by \cref{representacionenbloques}, the map $b\mapsto \Delta(b)$ is a (non-trivial) homomorphism. Therefore $$2^t\Delta(b)=\Delta(2b)=2\Delta(b)$$ and $t=1.$
	\end{proof}
	
	Define $$K(b)=-\frac{|c(b)|^2}{2}+i\Delta(b).$$
	Observe that 
	$$K(b)=\frac{B(\eta_2,\rho(1,b)\eta_1)}{|B(\eta_2,\rho(1,b)\eta_1)|}B(\rho(1,b)\eta_2,\eta_2),$$ therefore $\Arg(K(b))$  does not depend on  $\eta_1$ and $\eta_2$, the representatives of the extremes of the axis preserved by the maps $\rho(\lambda,0)$,  if the normalization condition $B(\eta_1,\eta_2)=1$ is imposed.  
	
		The following lemma  is an immediate consequence of \cref{representacionenbloques}. 
	\begin{lem}\label{propiedadesdeK}
		Given a non-elementary representation, 
		for every $\lambda>0$ and every $b\in\R$,      the following hold. 
		\begin{enumerate}{
			}{}
			\item $K(\lambda b)=\lambda^tK(b).$
			\item $K(-b)=\overline{K(b)}.$
			\item $K(b+d)=K(b)+K(d)+\langle c(d),c(-b)\rangle$.
		\end{enumerate} 
	\end{lem}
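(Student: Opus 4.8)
The plan is to read all three identities directly off the block form of $\rho(1,b)$ furnished by \cref{representacionenbloques}, using only that $\rho$ is a homomorphism into $U(B)$, that the maps $\pi(\cdot)$ are unitary, and the normalizations $c(0)=0$, $\Delta(0)=0$ from item 5 of that proposition. The organizing observation is that, when $\lambda=1$ (so that $\chi(1)=1$), the complex number $K(b)=-|c(b)|^2/2+i\Delta(b)$ is exactly the $(1,2)$-entry of the matrix representing $\rho(1,b)$ in the decomposition $\C\eta_1\oplus\C\eta_2\oplus(\eta_1^\perp\cap\eta_2^\perp)$. Consequently, each homomorphism relation among the $\rho(1,b)$ translates into a relation for $K$, and each scaling relation among the blocks translates into a homogeneity statement for $K$.

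For item 1 I would substitute $\lambda\mapsto\sqrt{\lambda}$ in items 6 and 7 of \cref{representacionenbloques}. After inserting $\chi(\lambda)=\lambda^t$ from \cref{tesmenorque2}, item 7 gives $\Delta(\lambda b)=\lambda^{t}\Delta(b)$, and item 6 gives $c(\lambda b)=\lambda^{t/2}\pi(\sqrt{\lambda},0)c(b)$; since $\pi(\sqrt{\lambda},0)$ is unitary this yields $|c(\lambda b)|^2=\lambda^{t}|c(b)|^2$. Taking the real and imaginary parts together produces $K(\lambda b)=\lambda^{t}K(b)$. For item 2, the imaginary part is immediate from item 8, namely $\Delta(-b)=-\Delta(b)$. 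For the real part I would specialize item 4 to $d=-b$: combined with $c(0)=0$ this gives $\pi(b)c(-b)=-c(b)$, and unitarity of $\pi(b)$ then forces $|c(-b)|=|c(b)|$. Hence $K(-b)=-|c(b)|^2/2-i\Delta(b)=\overline{K(b)}$.

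Item 3 is the substantive one, and I would obtain it by expanding the homomorphism identity $\rho(1,b)\rho(1,d)=\rho(1,b+d)$ by block multiplication and extracting the $(1,2)$-entry. Using that the $(1,1)$ and $(2,2)$ blocks are the scalar $1$, that the first-row third entry is the functional $-\langle\pi(b)(\boldsymbol{\cdot}),c(b)\rangle$, and that the third-row second entry is the vector $c(d)$, one reads off $K(b+d)=K(b)+K(d)-\langle\pi(b)c(d),c(b)\rangle$. It then remains to identify the last term with the claimed $\langle c(d),c(-b)\rangle$: substituting $c(b)=-\pi(b)c(-b)$ from the previous paragraph and using that $\pi(b)$ preserves the form gives $-\langle\pi(b)c(d),c(b)\rangle=\langle\pi(b)c(d),\pi(b)c(-b)\rangle=\langle c(d),c(-b)\rangle$, which finishes the formula.

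The only delicate point, and the main (mild) obstacle, is the bookkeeping of the sesquilinear convention: $\langle\boldsymbol{\cdot},\boldsymbol{\cdot}\rangle$ is linear in the first and antilinear in the second slot, so the sign picked up when pulling the factor $-1$ out of the second argument and the placement of the complex conjugation must be tracked consistently throughout the manipulation of item 3. As an independent cross-check one can split item 3 into its real and imaginary parts and match them, after replacing $b$ by $-b$ and invoking item 8, against items 10 and 9 of \cref{representacionenbloques} respectively; the agreement of this route with the matrix computation confirms that the conventions have been handled correctly.
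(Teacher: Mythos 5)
Your proof is correct and follows the route the paper intends: the paper gives no argument at all for this lemma, simply declaring it an immediate consequence of \cref{representacionenbloques}, and your derivation (reading $K(b)$ off as the $(1,2)$-entry of the block matrix, using items 4--8 together with unitarity of $\pi$, and expanding the homomorphism identity $\rho(1,b)\rho(1,d)=\rho(1,b+d)$ for item 3) is precisely that computation made explicit. Your cross-check of item 3 against items 9 and 10 of \cref{representacionenbloques} is consistent and correctly handled.
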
 

	Define  the \textit{displacement} of $\rho$ as $t$ and denote it by $\ell(\rho)$.  
	The following lemma is similar to  Theorem B in \cite{monod2014exotic}. The proof there works for this particular case (see propositions  \ref{transtividad} and \ref{kerneldec}). 
	\begin{lem}
		For every  $g\in\iso(\mathbf{H}_\C^1)_o$, $\ell(\rho(g))=\ell(\rho)\ell(g)$. 
	\end{lem}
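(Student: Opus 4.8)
The plan is to reduce the identity to the single one-parameter family of diagonal hyperbolic elements $g(\lambda,0)$, for which both displacements can be read off directly, and then to propagate the equality by conjugation-invariance. First I would dispose of the non-hyperbolic elements. By \cref{preservaeltipo} the representation $\rho$ preserves the type, and by \cref{tricotomia} an elliptic or parabolic isometry has vanishing displacement. Hence if $g$ is elliptic or parabolic, then so is $\rho(g)$, and both sides of the claimed equality vanish, so the identity holds trivially. It then remains only to treat hyperbolic $g$.

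Next I would set up the reduction to normal form. Both $g\mapsto\ell(g)$ and $g\mapsto\ell(\rho(g))$ are invariant under conjugation, since displacement length is preserved by any isometric conjugation and $\rho$ is a homomorphism, so that $\ell(\rho(hgh^{-1}))=\ell(\rho(h)\rho(g)\rho(h)^{-1})=\ell(\rho(g))$. A hyperbolic element fixes exactly two boundary points; by the double transitivity of part 3 of \cref{transtividad} I may conjugate these to $[\xi_1]$ and $[\xi_2]$. Any isometry fixing $[\xi_1]$ lies in the stabilizer $\pi(P)$, and within $P$ the elements fixing $[\xi_2]$ as well are precisely those with $b=0$; consequently every hyperbolic element is conjugate to some $g(\lambda,0)$ with $\lambda>0$, $\lambda\neq1$. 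By the conjugation-invariance just noted, it therefore suffices to verify the equality for $g=g(\lambda,0)$.

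Finally I would compute both sides for $g(\lambda,0)$. On the source side, the point $x=\tfrac{1}{\sqrt2}(\xi_1+\xi_2)$ lies on the preserved axis, and the distance formula together with $g(\lambda,0)\xi_1=\lambda\xi_1$ and $g(\lambda,0)\xi_2=\lambda^{-1}\xi_2$ gives $\cosh d(g(\lambda,0)x,x)=\tfrac12(\lambda+\lambda^{-1})$, hence $\ell(g(\lambda,0))=|\ln\lambda|$; this is exactly the computation appearing in the proof of \cref{kerneldec}. On the target side, specializing $b=0$ in \cref{representacionenbloques} (and using point 5 there, namely $\Delta(\lambda,0)=0$ and $c(\lambda,0)=0$) shows that $\rho(\lambda,0)$ is represented by a block-diagonal linear map scaling $\eta_1$ by $\chi(\lambda)=\lambda^{t}$ and $\eta_2$ by $\chi(\lambda)^{-1}$, with $\pi(\lambda,0)$ unitary on the orthogonal complement. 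The same Busemann-character argument of \cref{kerneldec}, now applied in $\hi_\C$, then yields $\ell(\rho(\lambda,0))=|\ln\chi(\lambda)|=t\,|\ln\lambda|$. Combining the two computations gives $\ell(\rho(\lambda,0))=t\,\ell(g(\lambda,0))=\ell(\rho)\,\ell(g(\lambda,0))$, and the conjugation-invariance extends this to all $g$.

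The main obstacle, such as it is, is the reduction step of the second paragraph: one must be sure that the conjugate of a hyperbolic element actually lands in the one-parameter family $\{\pi(g(\lambda,0))\}$ rather than in the double coset $\pi(P)\pi(s)\pi(P)$. This is where I would be careful to invoke that $\pi(P)$ is the full stabilizer of $[\xi_1]$, so that an element fixing both $[\xi_1]$ and $[\xi_2]$ is forced to be diagonal; everything after that is a routine evaluation of the distance formula and of the Busemann character.
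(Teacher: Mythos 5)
Your proof is correct and is essentially the paper's argument: the paper disposes of this lemma by citing Theorem B of Monod--Py and pointing to \cref{transtividad} and \cref{kerneldec}, and your write-up is exactly that proof spelled out — type preservation via \cref{preservaeltipo}, reduction to the diagonal elements $g(\lambda,0)$ by conjugation-invariance and the double transitivity of \cref{transtividad}, and the Busemann-character computation of \cref{kerneldec} (with $\chi(\lambda)=\lambda^t$ from \cref{representacionenbloques}) on both the source and target sides.
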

	
	In \cite{monod2014exotic}, among other things, the authors classified the irreducible representations $\iso(\mathbf{H}_\R^n)\xrightarrow{\rho}\iso(\mathbf{H}_\R^\infty)$. They showed that for every $0<t<1$ there exists a unique, up to a conjugation, irreducible representation $\rho_t$ such that for every $g\in\iso(\mathbf{H}_\R^n)$,   $\ell(\rho)=t$. For $t=1$ they showed that there is not an irreducible representation $\rho$ such that $\ell(\rho)=1$. 
	
	Every representation $\iso(\mathbf{H}_\R^2)\xrightarrow{\rho}\iso(\mathbf{H}_\R^\infty)$
	is in fact a linear representation into $O(1,\infty)$.  
	By Corollary 5.2 of \cite{monod2018notes}, $\rho_t$ restricted 
	to $\iso(\mathbf{H}_\R^2)_o $ remains non-elementary, thus it has an irreducible part. With a small abuse of notation, keep the notation $\rho_t$ for the irreducible representation. 
	There is a natural embedding  $O(1,\infty)<U(1,\infty)$ through complexification. In Proposition 5.10 of \cite{monod2018notes}, the author showed that the complexification of any irreducible representation of $\iso(\mathbf{H}_\R^n)$ into $O(1,\infty)$ remains irreducible. The proof there  works also for $\iso(\mathbf{H}_\R^2)_o$. 
	Thus the complexification of $\rho_t, 
$
	$$\iso(\mathbf{H}_\R^2)_o\xrightarrow{\rho_t^\C}U(1,\infty)$$ 
	is irreducible and such that 
	$\ell(\rho_t^\C)=t$. 
	
	Let $$\isocuno\xrightarrow{\overline{\Phi}}\iso(\mathbf{H}_\R^2)_o$$ be  the homomorphism of \cref{semultiplicapordos} and recall that if $g\in \iso(\mathbf{H}_\C^1)_o$, then $\ell(\overline{\Phi}(g))=2\ell(g)$. Therefore for every $t\in(0,1)$ and every $g\in \isocuno$, 
	$$\ell (\rho^\C_t\circ\overline{\Phi}(g))=t\ell(\overline{\Phi}(g))=2t\ell(g).$$
	This shows that for every $q\in(0,2)$ there exists an irreducible representation $$\isocuno\xrightarrow{\rho}\iso(\mathbf{H}^\infty_\C)_o$$ such that 
	$\ell(\rho)=q.$
	\begin{lem}\label{Deltaesceroparacomplexificaciones}
		If $\rho$ is     the complexification of an irreducible representation $$\isocuno\xrightarrow{} \iso(\hi_\R)_o, $$ then $\Delta(b)=0$, for every $b\in\R$. 
	\end{lem}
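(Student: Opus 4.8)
Show that if $\rho$ is the complexification of a representation into $\iso(\hi_\R)_o$, then the imaginary part $\Delta(b)$ vanishes identically.

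The plan is to prove that $K(b)$ is real, since $\Delta(b)=\operatorname{Im}K(b)$ (recall $K(b)=-|c(b)|^2/2+i\Delta(b)$). Write $\rho=\sigma^{\C}$, the complexification of an irreducible representation $\sigma\colon\isocuno\to O(1,\infty)$ acting on a real Hilbert space $H_{\R}$ carrying the symmetric form $B_{\R}$; then $\rho$ acts on $H_{\C}=H_{\R}\otimes_{\R}\C$, the form $B$ is the Hermitian extension of $B_{\R}$, and in particular $B$ restricts to the real-valued form $B_{\R}$ on $H_{\R}$. Since $\chi(1)=1$, the block description of \cref{representacionenbloques} gives $\rho(1,b)\eta_1=\eta_1$, so $B(\eta_2,\rho(1,b)\eta_1)=B(\eta_2,\eta_1)=1$ and the scalar prefactor in the displayed formula for $K(b)$ is equal to $1$; hence $K(b)=B(\rho(1,b)\eta_2,\eta_2)$. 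It therefore suffices to show that the linear lift $\rho(1,b)$ preserves $H_{\R}$ and that $\eta_2$ lies in $H_{\R}$.

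First I would locate $\eta_1$ and $\eta_2$ in the real subspace. By \cref{preservaeltipo} applied to $\rho$, the point $\eta_1$ is the unique point of $\partial\hi_{\C}$ fixed by $\rho(P)$. Applying \cref{preservaeltipo} instead to the real representation $\sigma$ yields a unique point $\eta_1^{\R}\in\partial\hi_{\R}$ fixed by $\sigma(P)$; regarded inside $\partial\hi_{\C}$, this real point is fixed by $\rho(P)=\sigma(P)^{\C}$, so by uniqueness $\eta_1=\eta_1^{\R}$ has a representative in $H_{\R}$. In the same way $\eta_2$, being the second endpoint of the common axis of the hyperbolic isometries $\rho(g(\lambda,0))=\sigma(g(\lambda,0))^{\C}$, is an endpoint of the corresponding real axis of the $\sigma(g(\lambda,0))$ and hence also admits a representative in $H_{\R}$. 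I normalize these so that $B_{\R}(\eta_1,\eta_2)=1$.

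The heart of the argument is to show that the normalized linear lift $\rho(1,b)\in U(B)$ used in \cref{representacionenbloques} is in fact real, i.e.\ equal to $\pm\,\sigma^{\C}(1,b)$. Both $\rho(1,b)$ and $\sigma^{\C}(1,b)$ lie in $U(B)$ and induce the same isometry of $\hi_{\C}$, so they differ by a scalar $u(b)$ with $|u(b)|=1$. Writing $\sigma(1,b)\eta_1=r\,\eta_1$ with $r\in\R\setminus\{0\}$ (legitimate since $\sigma(1,b)\in O(1,\infty)$ is an invertible real map fixing the real line $[\eta_1]$) and evaluating on $\eta_1$, the normalization $B(\rho(1,b)\eta_1,\eta_2)>0$ gives
$$0<B(\rho(1,b)\eta_1,\eta_2)=u(b)\,B_{\R}(\sigma(1,b)\eta_1,\eta_2)=u(b)\,r.$$
Since $r$ is a nonzero real and $|u(b)|=1$, this forces $u(b)=\pm1$; hence $\rho(1,b)=\pm\,\sigma^{\C}(1,b)$ maps $H_{\R}$ into $H_{\R}$.

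It then follows that $\rho(1,b)\eta_2\in H_{\R}$, so $K(b)=B(\rho(1,b)\eta_2,\eta_2)=B_{\R}(\pm\sigma(1,b)\eta_2,\eta_2)$ is real and $\Delta(b)=\operatorname{Im}K(b)=0$, as claimed. The step I expect to be the main obstacle is the third paragraph: controlling the projective (phase) ambiguity between the two lifts and pinning it to $\pm1$. This is exactly where the earlier normalization $B(\rho(\lambda,b)\eta_1,\eta_2)>0$ becomes indispensable, and it depends on having first identified $\eta_1,\eta_2$ as real boundary points so that the relevant values of $B$ are real. Conceptually, the same conclusion can be repackaged through the antilinear involution $J$ of $H_{\C}$ with fixed space $H_{\R}$: it commutes with $\sigma^{\C}$, satisfies $B(Jv,Jw)=\overline{B(v,w)}$, and fixes $[\eta_1]$ and $[\eta_2]$; once $u(b)=\pm1$ is known, $J$ commutes with $\rho(1,b)$ as well and yields $\overline{K(b)}=K(b)$ directly.
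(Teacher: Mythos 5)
Your proof is correct and takes essentially the same route as the paper: both arguments choose the two distinguished boundary points $\eta_1,\eta_2$ inside the real hyperbolic subspace preserved by $\rho$ and conclude that $K(b)$ is real, hence $\Delta(b)=0$. The only difference is bookkeeping around the lift ambiguity: the paper invokes the fact that $\Arg(K(b))$ is independent of the normalized representatives (the phase prefactor in the displayed formula for $K(b)$ absorbs any unimodular scalar change of lift), whereas you pin the scalar to $\pm1$ explicitly using the normalization $B(\rho(1,b)\eta_1,\eta_2)>0$.
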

	\begin{proof}
		Observe that $\Arg(K(b))$ does not depend on the choice of representatives $\eta_1$, $\eta_2$  of the extremes of the axis preserved by the isometries $\rho(\lambda,0)$ as long as $B(\eta_1,\eta_2)=1$ (see the definition before \cref{propiedadesdeK}). Therefore if $\eta_1$ and $\eta_2$ are chosen in the totally real subspace that contains the representatives of the real hyperbolic subspace of $\hi_\C$ preserved by $\rho$, it is clear that $K(b)\in\R$. 	
	\end{proof}
	The following proposition follows some of the  ideas of \cite{monod2014exotic} and shows that there is not an irreducible representation $\isocuno\xrightarrow{\rho}\iso(\mathbf{H}_\C)_o$ such that $\ell(\rho)=2.$   
	\begin{prop}\label{tesdoseslineal}
		If $\rho$ is only supposed non-elementary and  such that $\ell(\rho)=2$, then $b\mapsto c(b)$ is a non-trivial linear map and $\Delta(b)=0$, for every $b\in\R$.  
	\end{prop}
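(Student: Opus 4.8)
The plan is to exploit that $t=\ell(\rho)=2$ is exactly the borderline value at which the triangle–inequality estimate in the proof of \cref{tesmenorque2} becomes an equality, and then to use the rigidity of the equality case in a genuine (positive definite) inner product to upgrade the a priori twisted cocycle $c$ to an honest linear map. Throughout I would work with the data $(\chi,c,\Delta,\pi)$ of \cref{representacionenbloques} and the identities recorded there and in \cref{propiedadesdeK}, keeping in mind that $\langle\cdot,\cdot\rangle$ is the restriction of $B$ to $\eta_1^\perp\cap\eta_2^\perp$ and is therefore positive definite.

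First I would record non-triviality and the homogeneity of the data. Since $t=2\neq1$, exactly as in the proof of \cref{tesmenorque2} we must have $c\not\equiv0$: if $c$ vanished identically, then by \cref{representacionenbloques}(9) the function $\Delta$ would be a nontrivial additive homomorphism and the scaling relation \cref{representacionenbloques}(7) would force $t=1$, a contradiction. Next, from $K(\lambda b)=\lambda^{2}K(b)$ (this is \cref{propiedadesdeK} with $t=2$) and $K(-b)=\overline{K(b)}$, separating the real and imaginary parts of $K(b)=-\tfrac{|c(b)|^2}{2}+i\Delta(b)$ yields $|c(b)|=|b|\,|c(1)|$ and $\Delta(b)=b|b|\,\Delta(1)$ for every $b\in\R$.

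The heart of the argument is the linearity of $c$. For $b,d$ of the same sign one has $|b+d|=|b|+|d|$, so the norm formula gives $|c(b+d)|=|c(b)|+|c(d)|=|c(b)|+|\pi(b)c(d)|$, the last equality because $\pi(b)$ is unitary. Comparing this with the cocycle relation $c(b+d)=c(b)+\pi(b)c(d)$ of \cref{representacionenbloques}(4) puts us in the equality case of the triangle inequality in the inner product space $\eta_1^\perp\cap\eta_2^\perp$; positive definiteness then forces $\pi(b)c(d)$ to be a nonnegative multiple of $c(b)$, and matching norms gives $\pi(b)c(d)=\tfrac{d}{b}c(b)$ whenever $b,d>0$, and similarly for $b,d<0$. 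Substituting back gives $c(b+d)=\tfrac{b+d}{b}c(b)$, so $c(b)/b$ is constant on $(0,\infty)$, whence $c(b)=b\,c(1)$ there, and moreover $\pi(b)c(1)=c(1)$ for $b>0$. Finally the relation $c(b)+\pi(b)c(-b)=c(0)=0$ together with $\pi(b)c(1)=c(1)$ identifies the constant on $(-\infty,0)$ with $c(1)$ as well, so $c(b)=b\,c(1)$ for all $b$; by the first step $c(1)\neq0$, so $c$ is a nontrivial linear map.

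It then remains to kill $\Delta$. With $c$ linear we have $\langle c(d),c(b)\rangle=db\,|c(1)|^2\in\R$, so its imaginary part vanishes, and feeding this into \cref{representacionenbloques}(9) gives $\Delta(d-b)-\Delta(d)+\Delta(b)=0$ for all $b,d$, i.e. $\Delta$ is additive; comparing additivity with the homogeneity $\Delta(b)=b|b|\Delta(1)$ established above (for instance $\Delta(2)=2\Delta(1)$ against $\Delta(2)=4\Delta(1)$) forces $\Delta(1)=0$, hence $\Delta\equiv0$. The step I expect to be the genuine obstacle is the rigidity in the previous paragraph: one must recognize that $t=2$ saturates the estimate of \cref{tesmenorque2} and then invoke the equality case of the triangle inequality, which is precisely where positive definiteness of $\langle\cdot,\cdot\rangle$ on $\eta_1^\perp\cap\eta_2^\perp$ lets us pass from a twisted cocycle to a linear map; everything else is bookkeeping with the identities of \cref{representacionenbloques}.
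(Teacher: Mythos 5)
Your proof is correct and takes essentially the same route as the paper: both hinge on the observation that $t=2$ makes the cocycle identity $c(b+d)=c(b)+\pi(b)c(d)$ an equality case of the triangle inequality in the positive definite space $\eta_1^\perp\cap\eta_2^\perp$ (the paper with $d=b$, deducing $\pi(b)c(b)=c(b)$ and then $\pi(d)c(b)=c(b)$ algebraically; you with two independent same-sign parameters, gluing the two half-lines), and both then kill $\Delta$ by playing the additivity coming from item 9 of \cref{representacionenbloques} against the homogeneity $\Delta(2)=4\Delta(1)$. The only other difference is ordering: you establish non-triviality of $c$ upfront via the $t\neq 1$ argument of \cref{tesmenorque2}, whereas the paper obtains $c(b)\neq 0$ at the end from parabolicity of $\rho(1,b)$ once $\Delta\equiv 0$ is known.
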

	\begin{proof}
	The decomposition of an isometry $\rho(g)$ and the properties of the maps in  \cref{representacionenbloques} are still valid if $\rho$ is only supposed non-elementary. Observe that   	
		$$\begin{array}{rcl}
			2 |c(b)|&=&|c(2b)|\\
			&=&|c(b)+\pi(b)c(b)|\\
			&=&\leq |c(b)|+|\pi(b)c(b)|\\
			&\leq&2|c(b)|. \end{array}$$
		Therefore  $\pi(b)c(b)=c(b)$, for every $b\in\R$. Observe that for every $b,d\in\R$, 
		$$\begin{array}{rcl}
			\pi(b)c(d)+c(b)&=&c(b+d)\\
			&=&\pi(b+d)c(b+d)\\
			&=&\pi(b)\pi(d)\big(c(d)+\pi(d)c(b)\big)\\
			&=&\pi(b)c(d)+\pi(d)c(b).
		\end{array}$$
		Thus, for every $b,d\in\R$, $\pi(d)c(b)=c(b)$, or in other words, 
		$b\mapsto c(b)$ is a linear map. 
		This implies  that  for every $b,d\in\R$, 
		$Im(\langle c(b),c(d)\rangle )=0$, thus by \cref{representacionenbloques}, the map $b\mapsto \Delta(b)$ is linear, but $\Delta(2)=4\Delta(1)$. Therefore  $c(b)\neq 0$ and $\Delta(b)=0$, for every $b\neq 0$. 
	\end{proof}

	\begin{lem}\label{linealmenteindependiente}
		If $\ell(\rho)\neq1,2$, the  family $\{c(b)\}_{b\in\R\setminus 0}$ is $\C$-linearly independent. \end{lem}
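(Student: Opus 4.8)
The plan is to reduce linear independence to the vanishing of all power sums $\sum_j z_j b_j^m$, exploiting that $t := \ell(\rho)$ is non-integer. First I would record a closed form for the Gram kernel of the family. Starting from property 3 of \cref{propiedadesdeK}, $K(b+d)=K(b)+K(d)+\langle c(d),c(-b)\rangle$, and substituting $-b$ for $b$ together with $K(-b)=\overline{K(b)}$, one obtains
$$\langle c(d),c(b)\rangle=K(d-b)-K(d)-\overline{K(b)}.$$
Next I would use homogeneity: property 1 of \cref{propiedadesdeK} gives $K(\lambda b)=\lambda^tK(b)$ for $\lambda>0$, so with $\kappa:=K(1)$ we have $K(b)=b^t\kappa$ for $b>0$ and $K(b)=|b|^t\overline{\kappa}$ for $b<0$. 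Since $\ell(\rho)=t\neq1$, \cref{tesmenorque2} (and its proof) forces $c\not\equiv0$; as $|c(b)|^2=-2\,Re\,K(b)$ is homogeneous of degree $t$ and $|c(-b)|=|c(b)|$, in fact $c(b)\neq0$ for every $b\neq0$, whence $Re\,\kappa=-|c(1)|^2/2<0$ and in particular $\kappa\neq0$.

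Now suppose, for a contradiction, that there is a nontrivial relation $\sum_{j=1}^n z_j c(b_j)=0$ with the $b_j$ distinct and nonzero. Pairing against $c(d)$ (the form is linear in the first argument) and using the kernel formula yields, for every $d\in\R$,
$$\sum_{j=1}^n z_j\,K(b_j-d)=A+Z\,\overline{K(d)},\qquad A:=\sum_{j}z_jK(b_j),\ \ Z:=\sum_j z_j.$$
Specializing to $d>\max_j b_j$, all the arguments $b_j-d$ and $-d$ are negative, so the homogeneity formula turns this into the clean identity
$$\overline{\kappa}\sum_{j=1}^n z_j\,(d-b_j)^t=A+Z\,\overline{\kappa}\,d^t.$$

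The crux, and the step I expect to be the main obstacle, is extracting an algebraic conclusion from this single analytic identity. I would expand $(d-b_j)^t=d^t\sum_{m\geq0}\binom{t}{m}(-b_j/d)^m$ (convergent for $d>\max_j|b_j|$), divide by $\overline{\kappa}\,d^t$, and substitute $u=1/d$, reducing the identity near $u=0$ to $\sum_{m\geq0}c_m u^m=Z+(A/\overline{\kappa})\,u^t$, where $c_m=(-1)^m\binom{t}{m}\,p_m$ and $p_m:=\sum_j z_j b_j^m$. Matching the constant terms gives $c_0=p_0=Z$, leaving $\sum_{m\geq1}c_m u^m=(A/\overline{\kappa})\,u^t$. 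Because $t\in(0,1)\cup(1,2)$ is not a nonnegative integer, $u^t$ is not $C^\infty$ at $0$ whereas the left-hand side is analytic, so the identity forces $A=0$ and $c_m=0$ for all $m\geq1$. Here the hypothesis $\ell(\rho)\neq1,2$ enters decisively: it is exactly the non-integrality of $t$ that guarantees $\binom{t}{m}\neq0$ for every $m\geq1$, so $c_m=0$ yields $p_m=0$ for all $m\geq1$. Finally, reading off $p_1=\dots=p_n=0$ gives a homogeneous system with coefficient matrix $(b_j^{m})_{1\le m,j\le n}$, which factors as a Vandermonde matrix times $\mathrm{diag}(b_1,\dots,b_n)$; since the $b_j$ are distinct and nonzero this matrix is invertible, forcing all $z_j=0$, the desired contradiction. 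The delicate point throughout is precisely the regularity argument separating integer from non-integer powers, so the write-up will hinge on making that analyticity comparison rigorous.
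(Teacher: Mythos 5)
Your proposal is correct, and it shares its first half with the paper's proof: both pair the hypothetical relation $\sum_j z_j c(b_j)=0$ against $c(d)$ and use the kernel identities (\cref{propiedadesdeK}, equivalently items 9--10 of \cref{representacionenbloques}) together with homogeneity $K(\lambda b)=\lambda^t K(b)$ to produce a one-variable functional identity in which the fractional powers $(d-b_j)^t$ and $d^t$ appear. Where you diverge is the extraction step. The paper localizes at the largest node $b_1$: it restricts $d$ to a small interval $(b_1,b_1+r)$, differentiates the identity twice (here $t\neq 1$ enters through $t(t-1)\neq0$), and lets $d\to b_1^+$, so that $(d-b_1)^{t-2}$ blows up (here $t\neq2$, i.e. $t-2<0$, enters) while every other term stays bounded; this kills $a_1$, and one repeats to kill the remaining coefficients one at a time. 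You instead work at infinity: expanding in $u=1/d$ and comparing the analytic series with $(A/\overline{\kappa})u^t$, non-integrality of $t\in(0,1)\cup(1,2)$ forces all power sums $p_m=\sum_j z_jb_j^m$ to vanish simultaneously (via $\binom{t}{m}\neq0$), and a Vandermonde argument finishes. Both routes are sound and both use $t\neq1,2$ in an essential way; the paper's is more elementary (no series manipulation, no Vandermonde, and no need to know $K(1)\neq0$, since it never divides by $\overline{K(1)}$), while yours makes the role of non-integrality of $t$ completely transparent and eliminates all coefficients in one stroke --- at the price of the extra verification $\kappa=K(1)\neq0$, which you handle correctly from $t\neq1\Rightarrow c\not\equiv0$ via \cref{tesmenorque2}. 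Two cosmetic points for the write-up: take $d>\max\{0,\max_j b_j\}$ (you need $d>0$ for $\overline{K(d)}=d^t\overline{\kappa}$, which is automatic once $d>\max_j|b_j|$), and make the flagged analyticity comparison rigorous by the standard leading-order argument: if $\sum_{m\geq1}c_mu^m=Cu^t$ on $(0,\delta)$ with $C\neq0$, comparing the lowest nonvanishing term $c_{m_0}u^{m_0}$ with $Cu^t$ as $u\to0^+$ forces $t=m_0\in\N$, a contradiction.
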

	\begin{proof}
		Suppose $\sum^n a_ic(b_i)=0$ with $b_i\neq0$. Without lost of generality, suppose that $b_1>b_i$ for every $i\neq1$.  For every $d\in \R$, 
		$$\begin{array}{rcl}0&=&Re\big(\sum^n a_i \langle c(b_i), c(d)\rangle\big)\\
			&=&	\sum^n Re(a_i) Re(\langle c(b_i),c(d)\rangle) -Im(a_i) Im(\langle c(b_i),c(d)\rangle) \end{array}$$
		and 
		$$\begin{array}{rcl}
			0&=&Im\big(\sum^n a_i\langle c(b_i),c(d)\rangle\big) \\
			&=&\sum^n Re(a_i)Im(\langle c(b_i),c(d)\rangle) +Im(a_i)Re(\langle c(b_i),c(d)\rangle).
		\end{array}
		$$
		Consider an interval $(b_1,b_1+r)$ such that $0\not\in (b_1,b_1+r)$ and consider $d\in (b_1,b_1+r).$
		By \cref{representacionenbloques} there are   constants $C_0, C_1, D_0, D_1$ such that 
		$$C_0d^t+\sum^nRe(\overline{K(1)}a_i) (d-b_i)^t =C_1$$
		and 
		$$D_0d^t +\sum^n Im (\overline{K(1)}a_i) (d-b_i)^t
		=D_1.$$
		Thus, there exist  constants  $E_0 ,E_1$ such that for every $d\in (b_1,b_1+r)$, 
		$$E_0d^t+\sum^n \overline{K(1)}a_i(d-b_i)^t=E_1.$$
		
		After differentiating twice the previous equality with respect to $d$ in the interval  $(b_1,b_1+r)$, it follows that 
		$$t(t-1)E_0d^{t-2}+t(t-1)\sum^n \overline{K(1)}a_i((d-b_i)^{t-2} 
		=0.$$
		If $d\to b_1^+$, then $(d-b_1)^{t-2}$ is unbounded, but for every $i\neq1$, $(d-b_i)^{t-2}$ is bounded. Therefore $a_1=0$. 
		Repeating  the same argument, it is possible to show that  for every $i$, $a_i=0$.  
	\end{proof}
	Let $\sigma$ be  the isometry of $\mathbf{H}_\C^1$ represented, in the basis $\{\xi_1,\xi_2\}$,  by  
	$$s =\begin{pmatrix}
		0&i\\
		i&0\\
	\end{pmatrix}.$$ 
	The next proposition and the corollary after it follow the arguments  of Proposition 2.4 of  \cite{monod2014exotic}. 
	\begin{prop}\label{formulaparaA}
		The isometry $\rho(\sigma)$ can be represented  in the decomposition $\C\eta_1\oplus \C \eta_2\oplus(\eta_1^\perp\cap\eta_2^\perp)$,  by $$\begin{pmatrix}
			0&\nu^{-1}&0\\
			\nu&0&0\\
			0&0&A\end{pmatrix},$$ 
		for some $\nu>0$ and some unitary map  $A$ such that, for every $b\in\R$,  
		$$Ac(b)=\nu K(b)c(1,-1/b). $$
	\end{prop}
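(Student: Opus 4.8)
The plan is to first pin down the block shape of $\rho(\sigma)$ and then to read off the formula for $Ac(b)$ from a Bruhat-type relation in $SU(1,1)$. For the block shape, note that the matrix $s$ satisfies $s\,g(\lambda,0)\,s^{-1}=g(\lambda^{-1},0)$, so applying the homomorphism $\rho$ gives $\rho(\sigma)\rho(g(\lambda,0))\rho(\sigma)^{-1}=\rho(g(\lambda^{-1},0))$. By \cref{preservaeltipo} and \cref{kerneldec} the isometry $\rho(g(\lambda,0))$ is hyperbolic with attracting endpoint $[\eta_1]$ and repelling endpoint $[\eta_2]$ when $\lambda>1$, whereas $\rho(g(\lambda^{-1},0))$ has these endpoints interchanged; hence $\rho(\sigma)$ must swap $[\eta_1]$ and $[\eta_2]$. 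At the linear level this reads $\rho(\sigma)\eta_1=\nu\eta_2$ and $\rho(\sigma)\eta_2=a\eta_1$ with $\nu,a\in\C^{*}$. Since $\C\eta_1\oplus\C\eta_2$ is $B$-nondegenerate and $\rho(\sigma)$ is $B$-unitary and preserves it, $\rho(\sigma)$ also preserves the orthogonal complement $\eta_1^\perp\cap\eta_2^\perp$, restricting there to a unitary map $A$; this is already the asserted block form. Finally $1=B(\eta_1,\eta_2)=B(\rho(\sigma)\eta_1,\rho(\sigma)\eta_2)=\nu\overline a$, and spending the residual $\mathbf S^1$-freedom in the linear lift of $\rho(\sigma)$ to make $\nu>0$ forces $a=\nu^{-1}$.

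For the formula I would use the identity, checked directly in $SU(1,1)$,
$$s\,g(1,b)\,s^{-1}=g(\tfrac1b,-1)\,s\,g(1,-\tfrac1b)\qquad(b>0),$$
where for $b<0$ one replaces $g(\tfrac1b,-1)$ by $g(-\tfrac1b,1)$, which represents the same isometry because $-\mathrm{Id}$ acts trivially. Writing $\Sigma=\rho(\sigma)$ and $N_b=\rho(g(1,b))$, the left-hand conjugate $\Sigma N_b\Sigma^{-1}$ is computed from the block forms of \cref{representacionenbloques}; its first column shows that $\Sigma N_b\Sigma^{-1}\eta_1$ has $\eta_2$-coordinate $\nu^2K(b)$ and $V$-coordinate $\nu Ac(b)$, where $V=\eta_1^\perp\cap\eta_2^\perp$. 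On the right-hand side, since $\rho|_P$ is an honest unitary homomorphism (\cref{ellevantamientoescontinuo}), $g(1,-\tfrac1b)$ fixes $\eta_1$, then $\Sigma\eta_1=\nu\eta_2$, and $\rho(g(\tfrac1b,-1))\eta_2$ is read off from its block form. Both sides are genuine linear representatives of one and the same isometry, so they agree up to a single scalar $\mu_0\in\mathbf S^1$.

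The $\eta_2$-coordinate comparison involves only known quantities and yields $\mu_0=\chi(b)/(\nu K(b))$; its automatic modulus one is exactly the compatibility $\chi(b)=\nu|K(b)|$. Substituting $\mu_0$ into the $V$-coordinate comparison $\nu c(1/b,-1)=\mu_0\,\nu Ac(b)$ gives $Ac(b)=\nu K(b)\chi(b)^{-1}c(1/b,-1)$. It remains to simplify $c(1/b,-1)$: using relation 5 of \cref{representacionenbloques}, the factorization $g(\tfrac1b,-1)=g(\tfrac1b,0)g(1,-b)$ has block-diagonal first factor, so $c(1/b,-1)=\pi(\tfrac1b,0)c(-b)$, and relation 6 gives $\pi(\tfrac1b,0)c(-b)=\chi(b)c(-1/b)$. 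Hence $c(1/b,-1)=\chi(b)c(-1/b)$, the factors of $\chi(b)$ cancel, and $Ac(b)=\nu K(b)\,c(1,-1/b)$, as claimed; the case $b<0$ is identical after the substitution above, and $b=0$ is trivial.

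The genuine difficulty is the projective scalar $\mu_0$: because $\rho$ is only a homomorphism into the isometry group, the two sides of the Bruhat relation are linear maps that coincide merely up to a unit scalar. The device that resolves this is to extract $\mu_0$ from the $\eta_2$-coordinate, where the unknown $A$ is absent, and reinsert it into the $V$-coordinate equation; everything else is the block calculus of \cref{representacionenbloques} together with the scaling relations for $c$ and $K$ from \cref{propiedadesdeK}.
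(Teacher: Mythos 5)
Your proof is correct and follows essentially the same route as the paper: the same block-form argument for $\rho(\sigma)$, and the same Bruhat-type relation (your identity $s\,g(1,b)\,s^{-1}=g(1/b,-1)\,s\,g(1,-1/b)$ is exactly the paper's relation $s\,g(1,b)\,s\,g(1,1/b)\,s=g(1/b,-1)$ rearranged using $s^{-1}=-s$), with the unknown projective scalar extracted from the $\eta_2$-coefficient just as the paper extracts its scalar $\theta$. The only cosmetic difference is that you apply the relation to $\eta_1$ rather than $\eta_2$, which costs you the extra (correct) simplification $c(1/b,-1)=\chi(b)c(-1/b)$.
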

	\begin{proof}
		Observe that  $g(\lambda,0)s=sg(\lambda^{-1},0)$, therefore $\rho(\sigma)$ preserves the set $\{\eta_1,\eta_2\}$. If $\rho(\sigma)$ fixes it pointwise, then $\eta_1$ would be a $\isocuno$-fixed point (see \cref{transtividad}) which is a contradiction. Thus $\rho(\sigma)$ admits a linear representative 
		$$\begin{pmatrix}
			0&\nu^{-1}&0\\
			\nu&0&0\\
			0&0&A\end{pmatrix},$$ 
		with $\nu>0$ and $A$ a unitary map of $\eta_1^\perp\cap\eta_2^\perp.$
		
		Observe that as elements of $SU(1,1)$, 	
		$$s\cdot g({1,b})\cdot s\cdot g(1,{b}/{|b|^2})\cdot s=\begin{pmatrix}
			b^{-1}&-i\\0&b
		\end{pmatrix},$$
		thus, 
		$$\rho(\sigma)\rho(1,b)\rho(\sigma)\rho(1,{b}/{|b|^2})\rho(\sigma)=\rho(1/|b|, -b/b|).$$
		Notice that using the canonical linear representatives of the isometries in the  previous identity, on one side, 
		$$\begin{array}{rcl}
			\rho(\sigma)\rho(1,b)\rho(\sigma)\rho(1,{b}/{|b|^2})\rho(\sigma)(\eta_2)&=&
			\rho(\sigma)\rho(1,b)(\eta_2)\\&=&
			\rho(\sigma)\Big( K(b)\eta_1+\eta_2 +c(b)  \Big)\\&=&
			\nu^{-1}\eta_1+\nu K(b)\eta_2+Ac(b), 
		\end{array}
		$$
		and on the other side, by \cref{representacionenbloques} and \cref{propiedadesdeK}, 
		$$\begin{array}{rcl}
			\rho(1/|b|, -b/b|)(\eta_2)&=&\rho(1/|b|,0)\rho(1,-b)(\eta_2)\\
			&=&\rho(1/|b|,0)\Big(K(-b) \eta_1+\eta_2+c(-b)\Big)\\
			&=&|b|^{-t}K(-b)\eta_1+|b|^t\eta_2+\pi(1/|b|,0)c(-b)\\
			&=&K(-b/|b|)\eta_1+|b|^t\eta_2+|b|^tc(-1/b). 
		\end{array}$$
		Thus, there exist  a unitary complex number $\theta$ such that 
		$$\theta\Big(\nu^{-1}\eta_1+\nu K(b)\eta_2+Ac(b)\Big) =K(-b/|b|)\eta_1+|b|^t\eta_2+|b|^tc(-1/b). $$
		Observe that 
		$\theta\nu K(b)=|b|^t$, therefore, by \cref{propiedadesdeK}, $\nu K(b/|b|)=\theta^{-1}$. This implies that 
		$$\begin{array}{rcl}Ac(b)&=&\nu K(b/|b|)|b|^tc(-1/b)\\
			&=&\nu K(b)c(-1/b).\end{array}$$	
	\end{proof}
	\begin{cor}\label{larestriccionaPdetermina}
		The representation $\rho$ is determined by its restriction to $P$. 
	\end{cor}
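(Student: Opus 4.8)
The plan is to reduce everything to the single extra isometry $\rho(\sigma)$ and then read it off from the data attached to $\rho|_P$. Recall from point $5$ of \cref{transtividad} the decomposition $\iso(\mathbf{H}_\C^1)_o=\pi(P)\sqcup\pi(P)\pi(s)\pi(P)$, which says that $P$ together with $\sigma$ generates the whole group: every isometry is either of the form $\rho(\pi(p))$ or $\rho(\pi(p_1))\rho(\sigma)\rho(\pi(p_2))$ with $p,p_1,p_2\in P$. Hence, as soon as $\rho|_P$ and the single isometry $\rho(\sigma)$ are known, $\rho$ is determined on all of $\iso(\mathbf{H}_\C^1)_o$. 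So the whole problem reduces to showing that $\rho(\sigma)$ is determined by $\rho|_P$.

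For this I would start from \cref{formulaparaA}, which already exhibits $\rho(\sigma)$, in the decomposition $\C\eta_1\oplus\C\eta_2\oplus(\eta_1^\perp\cap\eta_2^\perp)$, as the block matrix with a scalar $\nu>0$ and a unitary $A$ of $\eta_1^\perp\cap\eta_2^\perp$ satisfying $Ac(b)=\nu K(b)c(-1/b)$ for all $b\neq0$. The vectors $\eta_1,\eta_2,c(b)$ and the numbers $K(b)$ are all produced by $\rho|_P$, so it only remains to check that $\nu$ and $A$ are pinned down. The scalar is immediate: taking norms in the relation at $b=1$ and using that $A$ is unitary, that $|c(-1)|=|c(1)|$ (from the cocycle identity $c(b+d)=c(b)+\pi(b)c(d)$ with $d=-b$), and that $c(1)\neq0$ (otherwise the scaling $|c(\lambda^2b)|=\lambda^t|c(b)|$ of \cref{representacionenbloques} would force $c\equiv0$, impossible for an irreducible $\rho$ into $\hi_\C$), one gets $\nu=|K(1)|^{-1}$, an expression in $\rho|_P$ alone. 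The same relation prescribes the value of $A$ on every $c(b)$, hence, by continuity, on the closed span $V:=\overline{\operatorname{span}}\{c(b):b\neq0\}$, entirely in terms of $\rho|_P$.

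The hard part is to upgrade this to the statement that $A$ is determined on all of $\eta_1^\perp\cap\eta_2^\perp$; equivalently, to prove $V=\eta_1^\perp\cap\eta_2^\perp$. I would do this by an invariance-plus-irreducibility argument. Put $N:=\C\eta_1\oplus\C\eta_2\oplus V$. Invariance of $N$ under $\rho(P)$ follows from the block shape in \cref{representacionenbloques} together with the identities $c(b+d)=c(b)+\pi(b)c(d)$ and $\chi(\lambda)\pi(\lambda,0)c(b)=c(\lambda^2b)$ there, which give $\pi(\lambda,b)V\subseteq V$; invariance under $\rho(\sigma)$ follows from $\rho(\sigma)\eta_1=\nu\eta_2$, $\rho(\sigma)\eta_2=\nu^{-1}\eta_1$ and $Ac(b)=\nu K(b)c(-1/b)\in V$. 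Since $P$ and $\sigma$ generate the group, $N$ is $\rho$-invariant. Now $B$ restricted to $\C\eta_1\oplus\C\eta_2$ is non-degenerate of signature $(1,1)$ and $V$ is a closed subspace of the definite space $\eta_1^\perp\cap\eta_2^\perp$, so $B|_N$ is non-degenerate with $\iota_+(B|_N)=\iota(B|_N)=1$; thus $N$ is a closed $\C$-subspace and $\pi(N)$ is a $\C$-hyperbolic subspace, invariant under $\rho$. Irreducibility forbids a proper invariant hyperbolic subspace, so $\pi(N)=\hi_\C$, that is $N=H$ and $V=\eta_1^\perp\cap\eta_2^\perp$. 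Therefore $A$, and with it $\rho(\sigma)$, is determined by $\rho|_P$, and so is $\rho$. The only genuine obstacle is this totality step $V=\eta_1^\perp\cap\eta_2^\perp$; once the invariant non-degenerate subspace $N$ is in place, irreducibility closes the argument.
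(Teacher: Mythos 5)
Your proof is correct and follows essentially the same route as the paper: the paper likewise determines $\nu$ from the unitarity computation $|c(1)|^2=\nu^2|K(1)|^2|c(1)|^2$, pins down $A$ on the closed span of $\{c(b)\}$ via \cref{formulaparaA}, and invokes irreducibility to show that $\C\eta_1\oplus\C\eta_2\oplus\overline{\operatorname{span}}\{c(b)\}$ is an invariant subspace of signature $(1,\infty)$ equal to the whole space. You merely spell out details the paper leaves implicit (the generation of the group by $P$ and $\sigma$ from \cref{transtividad}, the invariance checks, and $c(1)\neq 0$), which is fine.
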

	\begin{proof}
		The identity $g(\lambda,b)=g(1,\lambda b)g(\lambda,0)$ implies that  $c(\lambda,b)=\lambda^{-t}c(\lambda b).$ Let $W$ be the closed complex  vector space  generated by $\{c(b)\}_{b\in\R}$ in $\eta_1^\perp\cap\eta_2^\perp$. By  \cref{representacionenbloques} and \cref{formulaparaA}, 	$\C\eta_1\oplus\C\eta_2\oplus W$ is a closed and invariant complex subspace of signature $(1,\infty)$. Therefore,  as $\rho$ is irreducible, $W=\eta_1^\perp\cap\eta_2^\perp$. 
		
		Observe that, 
		$$\begin{array}{rcl}	
		|c(1)|^2&=&\langle Ac(1), A c(1)\rangle \\
		&=& \nu^2|K(1)|^2|c(1)|^2  .
		\end{array}$$ This and \cref{formulaparaA} show that $\rho(\sigma)$ is determined by $\rho|_P$. 	\end{proof}
	
	\section{A new family of representations}
	
		In this section a family of irreducible representations of $\isocuno$ is built,  that to the best of the author's knowledge were not known before. The way this  is done  is using a binary product for irreducible representations that is developed in  Subsection \ref{abc}. 
		
		 In  Subsection 2.1 it is defined a  complete invariant for irreducible 
		representations. 
		This invariant allows  to assure that this new representations are not equivalent to any of those  described in \cite{monod2018notes} and  \cite{monod2014exotic} (see the comments before \cref{Deltaesceroparacomplexificaciones} and before  \cref{cartandelapotencidelatautologica}). In particular all the irreducible representations $\rho$ such that $\ell(\rho)=1$ are classified.

	\subsection{Invariants for representations of $\isocuno$}
	In this subsection the theory of kernels of complex hyperbolic type  developed in \cite{monod2018notes} is used to find a complete invariant for irreducible representations of $\isocuno.$ 
	\begin{lem}\label{cartaneneldedimensionfinita}
		For every $y\in\mathbf{H}_\C^1$, 
		$$\lim\limits_{b\to\infty}\Cart\big(g(1,b)y,g(1,-b)y,y\big)=-\frac{\pi}{2}.$$ 
	\end{lem}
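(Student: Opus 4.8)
The plan is to reduce the statement to an explicit computation in the two-dimensional Hermitian space $H=\C^2$ underlying $\mathbf{H}_\C^1$, working in the basis $\{\xi_1,\xi_2\}$ for which $B(\xi_1,\xi_1)=B(\xi_2,\xi_2)=0$ and $B(\xi_1,\xi_2)=1$. Since rescaling a lift $\tilde x\mapsto\lambda\tilde x$ multiplies the product $B(\tilde x,\tilde y)B(\tilde y,\tilde z)B(\tilde z,\tilde x)$ by a positive real, the Cartan argument does not depend on the chosen lifts, so I would fix an arbitrary lift $\tilde y=\alpha\xi_1+\beta\xi_2$ of $y$. The condition $y\in\mathbf{H}_\C^1$ reads $B(\tilde y,\tilde y)=2Re(\alpha\overline{\beta})>0$, which in particular forces $\beta\neq0$. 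Because $g(1,\pm b)$ act linearly and preserve $B$, the vectors $g(1,b)\tilde y$ and $g(1,-b)\tilde y=g(1,b)^{-1}\tilde y$ are legitimate lifts of $g(1,b)y$ and $g(1,-b)y$, so I may evaluate $\Cart$ directly on these three lifts.

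Next I would carry out the three Hermitian products entering the cocycle. Writing the parabolic action as $g(1,b)\tilde y=(\alpha+ib\beta)\xi_1+\beta\xi_2$ and setting $R=2Re(\alpha\overline{\beta})>0$ and $S=|\beta|^2>0$, a short computation gives
$$B\big(g(1,b)\tilde y,\,g(1,-b)\tilde y\big)=R+2ibS,\qquad B\big(g(1,-b)\tilde y,\,\tilde y\big)=B\big(\tilde y,\,g(1,b)\tilde y\big)=R-ibS,$$
the last equality being a consequence of the $B$-unitarity of $g(1,b)$, which gives $B(g(1,b)^{-1}\tilde y,\tilde y)=B(\tilde y,g(1,b)\tilde y)$. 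Hence the quantity whose argument defines the Cartan invariant is
$$P(b):=B\big(g(1,b)\tilde y,g(1,-b)\tilde y\big)\,B\big(g(1,-b)\tilde y,\tilde y\big)\,B\big(\tilde y,g(1,b)\tilde y\big)=(R+2ibS)(R-ibS)^2.$$

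Finally I would expand $P(b)$ into real and imaginary parts, obtaining $Re(P(b))=R^3+3RS^2b^2$ and $Im(P(b))=-2S^3b^3$. As $b\to+\infty$ the real part stays positive and grows like $b^2$ while the imaginary part is negative and grows like $b^3$, so $P(b)$ lies in the open fourth quadrant with $Im(P(b))/Re(P(b))\to-\infty$; therefore $\Arg(P(b))\to-\frac{\pi}{2}$, which is exactly the claim.

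The computation is essentially routine and I do not expect a serious obstacle. The only points that require care are bookkeeping ones: checking that the lifts of $g(1,\pm b)y$ are consistently produced by the linear $SU(1,1)$-action, so that no stray unimodular factor spoils the argument, and verifying that $-\frac{\pi}{2}$ is reached as a genuine limit of the principal argument in $(-\frac{\pi}{2},\frac{\pi}{2})$ rather than after a wrap-around. The latter is guaranteed because $P(b)$ remains in a single (fourth) quadrant for all large $b$, so no branch issue of $\Arg$ can arise.
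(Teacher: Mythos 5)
Your proposal is correct and follows essentially the same route as the paper: both evaluate the Cartan cocycle on explicit lifts in the basis $\{\xi_1,\xi_2\}$, reduce it to the argument of the cubic $(R+2ibS)(R-ibS)^2$ whose real part is positive of order $b^2$ and whose imaginary part is $-2S^3b^3$, and conclude the limit is $-\frac{\pi}{2}$. The only cosmetic difference is that the paper normalizes the lift to $w=a\xi_1+\xi_2$ (so $S=1$), whereas you carry a general lift and the constants $R,S$ through the computation.
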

	\begin{proof}
		If $y$ is represented by $w=a\xi_1+\xi_2$, then $Re(a)>0$ and 
		$$\begin{array}{rcl}
			\Cart\big(g(1,b)y,g(1,-b)y,y\big)&=&\\
			\Arg\Big(B(g(1,2b)w,w)B(g(1,-b)w,w)^2\Big)&=&\\
			\Arg\Big( 2Re(a)(4Re(a)^2-b^2)+8Re(a)b^2+ i((4Re(a)^2-b^2)2b-8Re(a)^2b)  \Big).
		\end{array}$$
		Therefore, 
		$$\lim\limits_{b\to\infty}	\Cart\big(g(1,b)y,g(1,-b)y,y\big)=-\frac{\pi}{2}.$$
	\end{proof}
Suppose $\rho$ is defined on $SU(1,1)$. Let $y\in\mathbf{H}_\C^1$ and let 
$K$ be the maximal compact subgroup of $SU(1,1)$ that fixes $y$. Denote  $x\in\hi_\C$ the point fixed by $\rho(K)$ (see Proposition 5.8 and Remark 5.9 in \cite{monod2018notes}). Then there exists $0\leq s$ such that 
	for every $g_1,g_2\in SU(1,1)$, 
	$$s\Cart(g_1y,g_2y,y)=\Cart(\rho(g_1)x,\rho(g_2)x,x). $$
	This is a consequence of the fact that the action of $SU(1,1)$ on $\mathbf{H}_\C^1$ is doubly transitive (see Remark 2.5 in \cite{monod2018notes}). 
	
	Observe that $s\leq1$ because there exist $g_1,g_2\in SU(1,1)$ such that $$|\Cart(g_1y,g_2y,y)|$$ is  as close as desired to $\pi/2$ and $$|\Cart(\rho(g_1)x,\rho(g_2)x,x)|<\pi/2.$$ 
	
	\begin{lem}\label{cartandimensioninfinita}	
	If $x\in\hi_\C,$ then 
		$$\lim\limits_{b\to\infty}\Cart (\rho(b)x,\rho(-b)x,x)=\Arg(K(-1)).$$	
	Moreover,  if $K\leq SU(1,1)$,  the  stabilizer of $x$, is a maximal compact subgroup,  $y\in\mathbf{H}^1_\C$ is the point fixed by $K$ and  $0< s\leq 1$ is  such that 
		$$	s\Cart\big(g(1,b)y,g(1,-b)y,y\big)=\Cart (\rho(b)x,\rho(-b)x,x),$$ then 
		$\frac{s\pi}{2}=\Arg(K(1)).$
	\end{lem}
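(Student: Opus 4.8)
The plan is to compute the limiting Cartan argument in infinite dimensions by using the block decomposition of the isometries $\rho(1,b)$ from \cref{representacionenbloques}, exactly mirroring the finite-dimensional computation in \cref{cartaneneldedimensionfinita}. First I would fix a linear lift and choose a base point $x\in\hi_\C$ with a normalized lift $\tilde x$; the natural choice is the point whose lift lies along $\eta_1+\eta_2$ (the ``midpoint'' of the axis), so that $\tilde x = \tfrac{1}{\sqrt2}(\eta_1+\eta_2)$. Using the matrix form of $\rho(1,b)$ acting on the decomposition $\C\eta_1\oplus\C\eta_2\oplus(\eta_1^\perp\cap\eta_2^\perp)$, I would write down $\rho(1,b)\tilde x$ and $\rho(1,-b)\tilde x$ explicitly in terms of $K(b)=-|c(b)|^2/2+i\Delta(b)$ and the cocycle $c(b)$, and then evaluate the three inner products $B(\rho(1,b)\tilde x,\rho(1,-b)\tilde x)$, $B(\rho(1,-b)\tilde x,\tilde x)$, $B(\tilde x,\rho(1,b)\tilde x)$ entering the Cartan argument.

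The key point is the homogeneity $K(\lambda b)=\lambda^t K(b)$ from \cref{propiedadesdeK}, which forces $|c(b)|^2$ and $\Delta(b)$ to grow like $|b|^t$, so that as $b\to\infty$ the dominant contribution to each $B$-pairing is governed by $K(\pm b)$. I would extract the leading-order behavior of the product $B(\rho(1,b)\tilde x,\rho(1,-b)\tilde x)\,B(\rho(1,-b)\tilde x,\tilde x)\,B(\tilde x,\rho(1,b)\tilde x)$ as $b\to\infty$, discarding lower-order terms, and show the argument of this product converges. The natural guess, by comparison with \cref{cartaneneldedimensionfinita} where the finite-dimensional limit is $-\pi/2=\Arg(\text{a purely imaginary number})$, is that the limiting argument is precisely $\Arg(K(-1))=\Arg(\overline{K(1)})$; using $K(-b)=\overline{K(b)}$ and the scaling, the dominant factor should be a positive multiple of $K(-1)$ (or its conjugate, up to the alternating sign conventions), giving the first assertion.

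For the second assertion I would combine the first limit with the finite-dimensional limit and the proportionality relation. By \cref{cartaneneldedimensionfinita}, $\lim_{b\to\infty}\Cart(g(1,b)y,g(1,-b)y,y)=-\tfrac{\pi}{2}$, and by the defining relation $s\,\Cart(g(1,b)y,g(1,-b)y,y)=\Cart(\rho(b)x,\rho(-b)x,x)$, passing to the limit on both sides gives $s\cdot(-\tfrac{\pi}{2})=\Arg(K(-1))$. Since $\Arg(K(-1))=-\Arg(K(1))$ by $K(-1)=\overline{K(1)}$, this rearranges to $\tfrac{s\pi}{2}=\Arg(K(1))$, as claimed.

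The main obstacle will be the asymptotic analysis in infinite dimensions: I must justify that the limit of the argument exists and pick out the correct leading term, since unlike the finite-dimensional case the vector $c(b)$ lives in an infinite-dimensional Hilbert space and $|c(b)|^2$, $\Delta(b)$, and the inner products $\langle c(d),c(-b)\rangle$ all interact through the cocycle identity $K(b+d)=K(b)+K(d)+\langle c(d),c(-b)\rangle$ in \cref{propiedadesdeK}. The care needed is to verify that, after dividing by $|b|^t$, every term converges and the cross terms producing $\langle c(b),c(-b)\rangle$-type contributions either vanish or combine into the $K(-1)$ factor; the homogeneity relations should make this bookkeeping tractable, but getting the sign and the normalization $\Arg(K(-1))$ (rather than $\Arg(K(1))$) right is the delicate part.
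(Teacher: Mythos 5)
Your plan follows the same route as the paper in outline: expand the $B$-pairings using the block form of \cref{representacionenbloques}, use the homogeneity $K(\lambda b)=\lambda^tK(b)$ and $K(-b)=\overline{K(b)}$ from \cref{propiedadesdeK} to see that the $K$-terms dominate as $b\to\infty$, conclude that the limiting argument is $\Arg(K(-1))$, and then obtain the second claim by letting $b\to\infty$ in $s\,\Cart\big(g(1,b)y,g(1,-b)y,y\big)=\Cart\big(\rho(b)x,\rho(-b)x,x\big)$ and invoking \cref{cartaneneldedimensionfinita}; that final deduction, including the sign bookkeeping $\Arg(K(-1))=-\Arg(K(1))$, is exactly the paper's.

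The gap is your choice of base point. You compute only at the point with lift $\tfrac{1}{\sqrt 2}(\eta_1+\eta_2)$, but the lemma asserts the limit for \emph{every} $x\in\hi_\C$, and this generality is precisely what the second claim consumes: there the point $x$ is the unique fixed point of $\rho(K)$ for a maximal compact subgroup $K\leq SU(1,1)$, and nothing forces that point to be the midpoint of the axis of $\{\rho(\lambda,0)\}_{\lambda>0}$. Concretely, $\sigma$ lies in the maximal compact subgroup fixing the midpoint $y$ of the axis in $\mathbf{H}^1_\C$, and by \cref{formulaparaA} the only point of the axis in $\hi_\C$ fixed by $\rho(\sigma)$ is $[\eta_1+\nu\eta_2]$, which coincides with your base point only if $\nu=1$; moreover the fixed point of all of $\rho(K)$ need not lie on the axis at all. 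So, as written, the identity in which you "pass to the limit on both sides" involves a point for which you have not proved the first claim. The repair is the computation the paper actually performs: take a general normalized lift $\tilde x=a\eta_1+\eta_2+u$, observe that each pairing has the form $2\mathrm{Re}(a)+K(\cdot)+(\text{terms involving }u)$, and bound those extra terms by $C_1b^{t/2}+C_2$ using $|c(b)|=|b|^{t/2}|c(1)|$ and unitarity of $\pi$; since $|K(b)|=|b|^t|K(1)|$ with $K(1)\neq0$ (because $\rho(1,1)$ is parabolic), these terms are negligible, and your leading-order analysis then goes through verbatim for arbitrary $x$, after which the rest of your argument is correct.
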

	\begin{proof}
		If $\tilde{x}=a\eta_1+\eta_2+u$ is a representative of $x$, then 
		$$\begin{array}{rcl}
			\Cart (\rho(1,b)x,\rho(1,-b)x,x)&=&\\
			\Arg\Big( B\big(\rho({1,2b})\tilde{x},\tilde{x}\big)
			B\big(\rho({1,-b})\tilde{x},\tilde{x}\big)^2\Big)&=&\\
			\Arg\Big(\Big(2Re(a)+K(2b)+\langle u,c(2b)\rangle +\langle c(2b)+\pi(2b)u,u\rangle\Big) \times&&\\
			\Big(2Re(a)+K(-b)+\langle u,c(b)\rangle +\langle c(-b)+\pi(-b)u,u\rangle\Big)^2\Big).
		\end{array}$$
		There exist   constants $C_1,C_2>0$ such that for every $b>0$,  
		$$\left|\langle u,c(b)\rangle +\langle c(-b)+\pi(-b)u,u\rangle\right|\leq C_1b^{\frac{t}{2}}+ C_2. $$ 
		Therefore, 
		$$\begin{array}{rcl}\lim\limits_{b\to\infty}\Cart (\rho(b)x,\rho(-b)x,x)&=&\lim\limits_{b\to\infty}\Arg\big( K(2b)K(-b)^2 \big)\\
			&=&\Arg(K(-1)).  
		\end{array}$$	
		The second claim is immediate from \cref{cartaneneldedimensionfinita}.
	\end{proof}
	Observe that the previous lemma, \cref{cartaneneldedimensionfinita} and the fact that the Cartan argument is left-invariant  imply   that neither $\Arg(K(1))$ nor $s$  depend on the choice of the point $x\in\hi_\C$ fixed by a maximal compact subgroup of $SU(1,1)$. The previous lemma shows also  that  $\Delta(1)\geq0$. 
	
	In view of the previous observations define $\Arg(\rho)$, the \textit{angular invariant of $\rho$}, as $\Arg(K(1))$. This invariant can be defined for non non-elementary representations not necessarily irreducible.  With this normalization, for a non-elementary representation $\rho$, $0\leq\Arg(\rho)\leq\frac{\pi}{2}.$ 
	
	\begin{prop}
If $\rho$ is non-elementary and $\Arg(\rho)=\pi/2$, then $\rho$ preserves a copy of $\mathbf{H}^1_\C.$
	\end{prop}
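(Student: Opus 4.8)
The plan is to decode the hypothesis $\Arg(\rho)=\frac{\pi}{2}$ at the level of the block data of \cref{representacionenbloques} and show it forces the off-diagonal vector $c$ to vanish identically, after which the two-dimensional space $V:=\C\eta_1\oplus\C\eta_2$ will be the sought invariant copy of $\mathbf{H}^1_\C$. First I would recall that $\Arg(\rho)=\Arg(K(1))$ and that $K(1)=-\tfrac{|c(1)|^2}{2}+i\Delta(1)$. A complex number of argument exactly $\tfrac{\pi}{2}$ is a positive imaginary multiple, so its real part vanishes: this gives $-\tfrac{|c(1)|^2}{2}=0$, hence $c(1)=0$ (and $\Delta(1)>0$). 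I would then propagate this vanishing to all parameters using \cref{propiedadesdeK}: the homogeneity $K(\lambda b)=\lambda^t K(b)$ shows $K(\lambda)=\lambda^t\,i\Delta(1)$ is purely imaginary for every $\lambda>0$, and the symmetry $K(-b)=\overline{K(b)}$ extends this to negative arguments, so $K(b)$ is purely imaginary for all $b\in\R$. Since $\operatorname{Re}K(b)=-\tfrac{|c(b)|^2}{2}$, this means $c(b)=0$ for every $b\in\R$.

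With $c\equiv 0$ the argument is essentially read off from the two structural results. The off-diagonal block in the matrix of \cref{representacionenbloques} is built from $c(\lambda,b)$, and since $c(\lambda,b)=\lambda^{-t}c(\lambda b)=0$ (the scaling identity from the proof of \cref{larestriccionaPdetermina}), each $\rho(\lambda,b)$ becomes block-diagonal for the decomposition $\C\eta_1\oplus\C\eta_2\oplus(\eta_1^\perp\cap\eta_2^\perp)$; thus $\rho(P)$ preserves $V=\C\eta_1\oplus\C\eta_2$. Next, \cref{formulaparaA} presents $\rho(\sigma)$ as a map interchanging the lines $\C\eta_1$ and $\C\eta_2$ while acting by a unitary on $\eta_1^\perp\cap\eta_2^\perp$, so $\rho(\sigma)$ also preserves $V$. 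Because $\isocuno=\pi(P)\sqcup\pi(P)\pi(s)\pi(P)$ by \cref{transtividad}, the group is generated by $P$ and $\sigma$; as $\rho$ is a homomorphism and both $\rho(P)$ and $\rho(\sigma)$ stabilize $V$, the whole image $\rho(\isocuno)$ stabilizes $V$. Finally, in the basis $\{\eta_1,\eta_2\}$ the form $B$ has Gram matrix $\left(\begin{smallmatrix}0&1\\1&0\end{smallmatrix}\right)$, which is non-degenerate of signature $(1,1)$, so $\pi(V)$ is a $\C$-hyperbolic subspace isometric to $\mathbf{H}^1_\C$ and it is $\rho$-invariant.

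The main subtlety I would be careful about is that $\rho$ is only assumed non-elementary, not irreducible. This matters in two ways. On the one hand, it means $\eta_1^\perp\cap\eta_2^\perp$ need not be trivial, so $\pi(V)$ may be a proper invariant subspace — which is exactly what ``preserves a copy of $\mathbf{H}^1_\C$'' should allow. On the other hand, I must check that \cref{representacionenbloques} and \cref{formulaparaA} remain valid under the weaker hypothesis; this is fine, since their proofs only invoke that $\rho$ is a homomorphism to the unitary group fixing $\eta_1$ on $P$ and that no boundary point is globally fixed (non-elementarity), exactly as already observed in the proof of \cref{tesdoseslineal}. Apart from this, the computations are routine, and the genuine work is concentrated in the first paragraph, namely upgrading $c(1)=0$ to $c\equiv 0$ via homogeneity and conjugation symmetry.
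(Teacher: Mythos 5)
Your proof is correct and takes essentially the same approach as the paper: the paper's entire proof is the single observation that $\mathrm{Re}(K(1))=0$ forces $c(b)=0$ for every $b\in\R$, with the invariance of $\C\eta_1\oplus\C\eta_2$ left implicit. Your write-up merely supplies the details the paper treats as trivial (propagation via $K(\lambda b)=\lambda^tK(b)$ and $K(-b)=\overline{K(b)}$, block-diagonality of $\rho(P)$, the action of $\rho(\sigma)$ from \cref{formulaparaA}, and generation of $\isocuno$ by $P$ and $\sigma$), together with the valid remark that \cref{representacionenbloques} and \cref{formulaparaA} persist under mere non-elementarity.
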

	\begin{proof}
		Observe that if $Re(K(1))=0$, then for every $b\in \R$, $c(b)=0.$
	\end{proof}
The previous proposition, which is trivial in this context,  is contained in the much more general Theorem 1.1 of  \cite{duchesne2021}. 

	In \cite{monod2018notes}, the author showed that if $G$ is a topological group and  $(\beta,\alpha)$ is a $G$-invariant  kernel of hyperbolic type, then for every $0<t<1$, $(\beta^t,t\alpha)$ is a $G$-invariant  kernel of hyperbolic type (see Theorem 1.12 of the aforementioned article). The author  also showed, in particular,  that if $(\beta,\alpha)$ is a kernel of hyperbolic type associated to the tautological action of $\iso(\mathbf{H}_\C^1)_o$ on $\mathbf{H}_\C^1$, then for every $0<t<1$, 
	$(\beta^t,t\alpha)$ induces (see \cref{gns}) a non-elementary representation 
	$\iso(\mathbf{H}_\C^1)_o\xrightarrow{\rho} \iso(\hi_\C)_o$   such that $\ell(\rho)=t $ (see  Theorem 1.15 and Lemma 2.2 of \cite{monod2018notes}). 
		The following proposition is  a direct consequence  of \cref{gns}	and \cref{cartandimensioninfinita}.
	\begin{prop}\label{cartandelapotencidelatautologica}
		Let $x\in\mathbf{H}_\C^1$ and let $0<t<1$. If $\rho$ is the irreducible part of the  non-elementary representation associated to the kernel $(\beta^t,t\alpha)$, where $(\beta,\alpha)$ is the kernel of hyperbolic type associated to $x$ and the tautological action of $\isocuno$ on $\mathbf{H}_\C^1$, then  $$\Arg(\rho)=\frac{t\pi}{2}. $$
	\end{prop}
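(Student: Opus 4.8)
The plan is to compute $\Arg(\rho)$ directly from its characterization in \cref{cartandimensioninfinita}, namely that $\Arg(\rho)=\Arg(K(1))$, where $K(b)=-|c(b)|^2/2+i\Delta(b)$ is the quantity attached to the horospherical part of the lifted representation in \cref{representacionenbloques}. Since the representation $\rho$ here is the irreducible part of the representation induced by the rescaled kernel $(\beta^t,t\alpha)$ via the GNS-type construction of \cref{gns}, the key point is that the cocycle $\alpha$ of $\rho$ is exactly $t$ times the Cartan cocycle of the tautological action. I would therefore first identify $\Arg(K(1))$ as a limit of Cartan arguments using \cref{cartandimensioninfinita}, and then transport that limit through the scaling by $t$.

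Concretely, \cref{cartandimensioninfinita} gives
$$\Arg(K(-1))=\lim_{b\to\infty}\Cart\big(\rho(b)x,\rho(-b)x,x\big),$$
and by \cref{gns} the right-hand Cartan arguments for $\rho$ equal the cocycle values $\alpha$ of the induced kernel, which is $t$ times the tautological Cartan cocycle. Hence
$$\lim_{b\to\infty}\Cart\big(\rho(b)x,\rho(-b)x,x\big)=t\lim_{b\to\infty}\Cart\big(g(1,b)y,g(1,-b)y,y\big),$$
where $y\in\mathbf{H}^1_\C$ is the base point for the tautological action. The finite-dimensional computation in \cref{cartaneneldedimensionfinita} evaluates the latter limit as $-\pi/2$. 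Combining these, $\Arg(K(-1))=-t\pi/2$. Using the conjugation relation $K(-1)=\overline{K(1)}$ from \cref{propiedadesdeK}, together with the fact (noted after \cref{cartandimensioninfinita}) that $\Delta(1)\geq 0$ so that $\Arg(K(1))\geq 0$, I conclude $\Arg(\rho)=\Arg(K(1))=t\pi/2$.

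The main subtlety to verify carefully is that the scaling of the cocycle really does pass through the limit with the correct sign and that the normalization conventions line up. In particular one must check that the factor $s$ appearing in \cref{cartandimensioninfinita} equals $t$ for this particular induced representation: by construction the cocycle of $(\beta^t,t\alpha)$ is $t\alpha$, so the proportionality constant relating $\Cart$ in $\hi_\C$ to $\Cart$ in $\mathbf{H}^1_\C$ is precisely $t$, whence $\frac{s\pi}{2}=\Arg(K(1))$ with $s=t$. I do not expect any genuine obstacle here; the argument is essentially a bookkeeping exercise chaining \cref{cartandimensioninfinita}, \cref{cartaneneldedimensionfinita}, and the defining property of the kernel $(\beta^t,t\alpha)$, with the only care needed being the sign bookkeeping between $K(1)$ and $K(-1)$ and confirming $\Arg(\rho)\in[0,\pi/2]$ so that the sign is resolved correctly.
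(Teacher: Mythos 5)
Your proposal is correct and follows essentially the same route as the paper: the paper states this proposition as a direct consequence of \cref{gns} and \cref{cartandimensioninfinita}, and your argument is exactly that chain spelled out — \cref{gns} identifies the Cartan cocycle of the induced representation at the GNS base point as $t$ times the tautological one, \cref{cartaneneldedimensionfinita} evaluates the finite-dimensional limit as $-\pi/2$, and the limit formula of \cref{cartandimensioninfinita} together with $K(-1)=\overline{K(1)}$ and $\Delta(1)\geq 0$ yields $\Arg(\rho)=\Arg(K(1))=t\pi/2$. Your closing observation that the constant $s$ of \cref{cartandimensioninfinita} equals $t$ is precisely the packaging the paper has in mind.
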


	\begin{lem}\label{gelfandparaelgrupoparabolico}
If $x\in\hi_\C$ is represented by $\frac{1}{\sqrt{2}}(\eta_1+\eta_2)$, then the function of hyperbolic type associated to $x$ can be reconstructed from  $K(1)$ and $\ell(\rho)$. 
	\end{lem}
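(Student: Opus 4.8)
The plan is to compute the function $\beta(g)=\cosh d(\rho(g)x,x)$ directly from the block description of $\rho|_P$ given in \cref{representacionenbloques}, and then to collapse everything to $K(1)$ and $t:=\ell(\rho)$ by homogeneity. Since $x$ is represented by $\tilde x=\frac{1}{\sqrt2}(\eta_1+\eta_2)$, which satisfies $B(\tilde x,\tilde x)=1$, and the linear lift $\rho(g)$ preserves $B$, the metric formula reduces to $\beta(g)=|B(\rho(g)\tilde x,\tilde x)|$. First I would evaluate this on the parabolic group $P$. Writing $\rho(g(\lambda,b))$ in the block form with respect to $\C\eta_1\oplus\C\eta_2\oplus(\eta_1^\perp\cap\eta_2^\perp)$ and using $B(\eta_1,\eta_1)=B(\eta_2,\eta_2)=0$, $B(\eta_1,\eta_2)=B(\eta_2,\eta_1)=1$, together with the orthogonality $c(\lambda,b)\perp\eta_1,\eta_2$, the off-diagonal block contributes nothing and one gets
$$B\big(\rho(g(\lambda,b))\tilde x,\tilde x\big)=\tfrac12\big(\chi(\lambda)+\chi(\lambda)^{-1}+K(\lambda,b)\big),$$
where $K(\lambda,b)=-\chi(\lambda)|c(\lambda,b)|^2/2+i\Delta(\lambda,b)$ is the $(1,2)$-entry.

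The next step is to simplify $K(\lambda,b)$. Starting from the factorization $g(\lambda,b)=g(1,\lambda b)g(\lambda,0)$ (already used in the proof of \cref{larestriccionaPdetermina}) and the fact that $\rho(\lambda,0)$ fixes the lines $\C\eta_1,\C\eta_2$ with eigenvalue $\chi(\lambda)^{-1}$ on $\eta_2$, computing the image of $\eta_2$ yields $K(\lambda,b)=\chi(\lambda)^{-1}K(\lambda b)$. Invoking \cref{propiedadesdeK}(1), namely $K(\lambda b)=\lambda^t K(b)$, together with $\chi(\lambda)=\lambda^t$, this collapses to $K(\lambda,b)=K(b)$. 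Hence $\beta(g(\lambda,b))=\tfrac12\,\big|\lambda^t+\lambda^{-t}+K(b)\big|$, and by \cref{propiedadesdeK}(1)--(2) we have $K(b)=b^tK(1)$ for $b>0$ and $K(b)=|b|^t\overline{K(1)}$ for $b<0$, with $K(0)=0$. Thus $\beta|_P$ is an explicit function of $K(1)$ and $t$ alone, which is the core of the statement.

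Finally, to pass from $P$ to all of $SU(1,1)$ I would invoke \cref{larestriccionaPdetermina}, that $\rho$ is determined by $\rho|_P$, and check that $\rho|_P$ itself is pinned down (up to conjugation) by $K(1)$ and $t$: the Gram data $\langle c(b),c(d)\rangle$ of the total family $\{c(b)\}$ are forced, since \cref{representacionenbloques}(9)--(10) express their real and imaginary parts through $|c(\cdot)|^2=-2\,\mathrm{Re}\,K$ and $\Delta=\mathrm{Im}\,K$, all determined by $K(1)$ and $t$; moreover $\nu=|K(1)|^{-1}$ and the identity $Ac(b)=\nu K(b)c(-1/b)$ of \cref{formulaparaA} fix $\rho(\sigma)$. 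The main obstacle is precisely this bookkeeping: the absolute value $\beta$ by itself does not see the imaginary part $\Delta$, so one must argue separately that $\mathrm{Re}\,K$ and $\mathrm{Im}\,K$ — hence the full inner-product structure and the map $A$ — are consistently recovered, which is what makes the reconstruction complete rather than merely determining $|B(\rho(g)\tilde x,\tilde x)|$.
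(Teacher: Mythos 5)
Your proposal is correct in substance, but it reaches the conclusion by a genuinely different mechanism than the paper, so let me compare the two. The paper's proof computes \emph{both} components of the kernel of hyperbolic type on the $P$-orbit of $x$: the Cartan cocycle $\Cart\big(\rho(\lambda,b)x,\rho(\gamma,d)x,x\big)$ \emph{and} the distance function $\cosh d(\rho(\lambda,b)x,x)$, reduces both to expressions in $K(1)$ and $t$ via \cref{propiedadesdeK}, and then concludes by the uniqueness part of \cref{gns} together with the totality of the $P$-orbit of $x$ (\cref{transtividad}); the passage from $P$ to $SU(1,1)$ is handled, as in your argument, by \cref{formulaparaA}. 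You compute only $\beta|_P$ (your formula $\beta(g(\lambda,b))=\tfrac12\big|\lambda^t+\lambda^{-t}+K(b)\big|$ agrees with the paper's) and never compute the Cartan cocycle $\alpha$, which is nevertheless part of the data to be reconstructed; instead you pin down $\rho|_P$ at the linear level, arguing that the Gram data $\langle c(b),c(d)\rangle$ of the total family $\{c(b)\}$ is forced by $K(1)$ and $t$. This route does work — it is essentially the Hilbert-space GNS uniqueness that the paper records later as \cref{defineunmapeounitario} — but to be complete it needs two points that you only gesture at as ``bookkeeping'': (i) matching two cocycle families $c_1,c_2$ by a unitary $U$ of $\eta_1^\perp\cap\eta_2^\perp$ does not by itself identify the representations of $P$; you must also recover the unitary part $\pi$ from $c$, which follows from the relation $c(b+d)=c(b)+\pi(b)c(d)$ and point 6 of \cref{representacionenbloques} applied to the total family; and (ii) the resulting conjugating isometry, being of the form $\mathrm{diag}(1,1,U)$, fixes $x=\big[\tfrac{1}{\sqrt2}(\eta_1+\eta_2)\big]$, which is exactly what guarantees that the kernel \emph{associated to $x$} — including $\alpha$ — is reconstructed, and not merely the conjugacy class of $\rho$. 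The trade-off: the paper's route produces the kernel formulas directly (the literal content of the lemma) and outsources all uniqueness to \cref{gns}, whereas yours avoids computing the Cartan cocycle but in exchange proves by hand, inside this lemma, what is essentially \cref{clasificacionconK} restricted to $P$.
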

	\begin{proof}
		The transformation  $\rho(\sigma)$ is determined by the restriction of $\rho$ to the subgroup $P$ (see \cref{formulaparaA}). The claim  is that the restriction of $\rho$  to $P$ is entirely  determined by $K(1)$ and the parameter $t$. 
		
		For every
		$b,d\in\R$ and for every $\lambda,\gamma>0$, 
		$$\begin{array}{rcl}
			\Cart\big(\rho(\lambda,b)x,\rho(\gamma,d)x,x\big)&=&\\
			\Arg\Big( B\big(\rho(\lambda\gamma^{-1},\gamma^{-1} b-\lambda^{-1} d)(\eta_1+\eta_2),\eta_1+\eta_2\big) B\big(\rho(\gamma,d)(\eta_1+\eta_2),\eta_1+\eta_2\big)\times&&\\
			B\big(\rho(\lambda^{-1},-b)(\eta_1+\eta_2),\eta_1+\eta_2\big)\Big)&=&\\
			\Arg\Big( B\big(\rho(1,\lambda^{-1} b-\lambda^{-2}\gamma  d)(\eta_1+\eta_2),\rho(\lambda^{-1}\gamma,0)(\eta_1+\eta_2)\big)\times&&\\ B\big(\rho(1,\gamma^{-1}d)(\eta_1+\eta_2),\rho(\gamma^{-1},0)(\eta_1+\eta_2)\big)\times&&\\
			B\big(\rho(1,-\lambda b)(\eta_1+\eta_2),\rho(\lambda,0)(\eta_1+\eta_2)\big)\Big). 
		\end{array}$$
		Observe that, by \cref{propiedadesdeK},  the last term can be recovered knowing the values of  $K(1)$ and $t$. 
		For the same reasons,  $$\begin{array}{rcl}
			coshd(\rho(\lambda,b)x,x)&=&\\
			coshd(\rho(1,\lambda^{-1}b)x,\rho(\lambda^{-1},0)x)&=&\\
			\frac{1}{2}\left|B\big((1+K(\lambda^{-1}b)\eta_1+\eta_2,\lambda^{-t}\eta_1+\lambda^t\eta_2\big)\right|
		\end{array}$$
		can be also recovered from $K(1)$ and $t$. 
		Therefore the claim follows from \cref{gns} and the fact that the $P$-orbit of $\frac{1}{\sqrt{2}}(\eta_1+\eta_2)$ is total (see \cref{transtividad}). 
	\end{proof}
	\begin{teo}\label{clasificacionconK}
		Let $\rho_1$ and $\rho_2$ be two  irreducible representations of $\isocuno$ into  $\iso(\mathbf{H}_\C^\infty)_o$ such that $\ell(\rho_1)=\ell(\rho_2)$. 
		Then $\rho_1$ and $\rho_2$ are equivalent if, and only if, $\Arg(\rho_1)=\Arg(\rho_2)$.
	\end{teo}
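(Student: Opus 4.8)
The plan is to run both implications through the Gelfand--Naimark--Segal correspondence of \cref{gns}, exploiting that the pair $(\ell(\rho),\Arg(\rho))$ determines the kernel of hyperbolic type attached to a well-chosen base point. The direction ``equivalent $\Rightarrow$ equal invariants'' I would dispatch immediately: the displacement $\ell(\rho)$ is a conjugation invariant, and by the remarks following \cref{cartandimensioninfinita} the quantity $\Arg(\rho)=\Arg(K(1))$ is computed as a limit of Cartan arguments along $P$-orbits and does not depend on the chosen base point; since Cartan arguments are invariant under holomorphic isometries of $\hi_\C$, conjugate representations share both invariants.

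For the converse, assume $\ell(\rho_1)=\ell(\rho_2)=t$ and $\Arg(\rho_1)=\Arg(\rho_2)$. Writing $K_i(b)=-|c_i(b)|^2/2+i\Delta_i(b)$, I would first note that $K_i(1)\neq 0$ (because $g(1,1)$ is parabolic and $\rho_i$ preserves the type by \cref{preservaeltipo}) and that $\Delta_i(1)\geq 0$, so that $\Arg(\rho_i)=\Arg(K_i(1))$ determines the direction $K_i(1)/|K_i(1)|$. The modulus $|K_i(1)|$, by contrast, is not an invariant: rescaling the fixed-point representatives by $\eta_1^{(i)}\mapsto a\,\eta_1^{(i)}$, $\eta_2^{(i)}\mapsto a^{-1}\eta_2^{(i)}$ with $a>0$ leaves $B(\eta_1^{(i)},\eta_2^{(i)})=1$ and the positivity normalization intact, does not alter $\rho_i$, and multiplies $K_i(1)$ by $a^{-2}$. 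I would use this freedom to arrange $|K_1(1)|=|K_2(1)|$, whence $K_1(1)=K_2(1)$. Then \cref{propiedadesdeK} (namely $K(\lambda b)=\lambda^t K(b)$ and $K(-b)=\overline{K(b)}$) together with $\ell(\rho_1)=\ell(\rho_2)$ forces $K_1\equiv K_2$ as functions on $\R$.

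Next I would invoke \cref{gelfandparaelgrupoparabolico}: taking base points $x_i$ represented by $\tfrac{1}{\sqrt2}(\eta_1^{(i)}+\eta_2^{(i)})$, the kernel of hyperbolic type of $\rho_i|_P$ is reconstructed from $K_i(1)$ and $t$. As these data now agree, the two kernels coincide; since the $P$-orbit of $x_i$ is total, \cref{gns} yields an isometry conjugating $\rho_1|_P$ to $\rho_2|_P$. Finally, \cref{formulaparaA} expresses $\rho_i(\sigma)$, and hence all of $\rho_i$ via the decomposition $\isocuno=\pi(P)\sqcup\pi(P)\pi(\sigma)\pi(P)$, through $K_i$ and $c_i$ in a way that is natural under conjugation, so by \cref{larestriccionaPdetermina} the same isometry conjugates $\rho_1$ to $\rho_2$ on all of $\isocuno$.

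The step I expect to be the main obstacle is the bookkeeping around the modulus of $K(1)$: one must verify carefully that the rescaling of representatives truly fixes the representation (not merely its projective class) while sweeping $|K(1)|$ across all of $\R_{>0}$, and that after this normalization the base points $x_1,x_2$ feed \emph{literally identical} kernels into \cref{gns}. A secondary delicate point is upgrading the $P$-conjugacy to a genuine $\isocuno$-conjugacy, i.e.\ checking that the isometry produced for the restriction to $P$ is compatible with the reconstruction of $\rho(\sigma)$ in \cref{formulaparaA} so that it conjugates the full representations.
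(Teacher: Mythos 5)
Your proposal is correct and follows essentially the same route as the paper's proof: both directions rest on the conjugation-invariance of $\Arg(K(1))$ coming from \cref{cartandimensioninfinita}, then the normalization $K_1(1)=K_2(1)$, the reconstruction of the restriction to $P$ from $K(1)$ and $t$ via \cref{gelfandparaelgrupoparabolico} and \cref{gns}, and finally the extension to all of $\isocuno$ via \cref{formulaparaA} and \cref{larestriccionaPdetermina}. The only (harmless) difference is the normalization step: you rescale the lift representatives $\eta_1\mapsto a\eta_1$, $\eta_2\mapsto a^{-1}\eta_2$, which multiplies $K_1(1)$ by $a^{-2}$, whereas the paper conjugates $\rho_1$ by $\rho_1(\gamma,0)$, which multiplies $K_1(1)$ by $\gamma^{2t}$ --- both moves sweep $|K_1(1)|$ over all of $\R_{>0}$ while fixing its argument, so the subsequent arguments are identical.
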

	\begin{proof}
		Suppose  that $\rho_1(P)$ and $\rho_2(P)$ share the same fixed point in $\partial \hi_\C$ and that the families $\{\rho_1(\lambda,0)\}_{\lambda>0}$ and
	$\{\rho_2(\lambda,0)\}_{\lambda>0}$ preserve the same axis. 
	
		If $\rho_1$ and $\rho_2$ are equivalent, their restrictions to the group $P$ are equivalent. Therefore there  exists $T$ an isometry of $\hi_\C$ such that $T\rho_1|_P T^{-1}=\rho_2|_P$. 
		Notice that $T(\eta_i)=\eta_i$, where $\eta_i$  for $i=1,2$ are the extremes of the axis preserved by the isometries $\rho_i(\lambda,0)$. If    $$K_i(1)= \frac{-|c_i(1)|^2}{2}+i\Delta_i(1)= \frac{Q(\eta_2,\rho_i(1,1)\eta_1)}{|Q(\eta_2,\rho_i(1,1)\eta_1)|^2}B(\rho_i(1,1)\eta_2,\eta_2),$$ 
		then it is clear that $\Arg(K_1(1))=\Arg(K_2(1))$ because $\Arg(K_i(1))$ does not depend on the choice of the representatives of $\eta_i$, as long as the condition $B(\eta_1,\eta_2)=1$ is fulfilled. 
		
		Suppose $\Arg(K_1(1))=\Arg(K_2(1))$. After conjugating $\rho_1$ by an isometry $\rho_1(\gamma,0)$ if needed, it is possible to suppose that $K_1(1)=K_2(1)$. 
		 Let $x\in\mathbf{H}_\C^\infty$ be the point  represented by $$\frac{1}{\sqrt{2}}(\eta_1+\eta_2).$$ 
		Consider the respective kernels of hyperbolic type associated to $x$. By   \cref{gelfandparaelgrupoparabolico}, the representations $\rho_1|_P$ and $\rho_2|_P$ can be supposed identical, therefore by \cref{formulaparaA}, 
		$\rho_1$ and $\rho_2$ are equivalent. 
		
			Observe that the assuming that $\rho_1$ and $\rho_2$ have the same distinguished points at infinity is not restrictive.  Indeed, if $\rho$ is an irreducible  representation, by \cref{cartandimensioninfinita}, $\Arg(K(1))$ is invariant under conjugations and    given any $\eta_1, \eta_2\in\partial\hi_\C$, up to a conjugation,  it is possible to suppose that $\eta_1$ is the unique fixed  point of $\rho(P)$ and that  $\eta_2$ is the other extreme of the axis preserved by the family $\{\rho(\lambda,0)\}_{\lambda>0}.$ 
	\end{proof}

	\subsection{Extending certain representations of a parabolic subgroup}\label{abc}
	In this section a procedure to produce from two irreducible  representations with the same displacement a third one, which  in general will not be equivalent to any of the previous two,  is described. With this method a new family of non-equivalent  representations will be constructed and in particular, for irreducible representations with  displacement $1$ this family will be exhaustive. 
	
	The definition of this binary operation relies on the fact that with certain conditions, the representations of $P$ into $\iso(\mathbf{H}_\C^\infty)_o$ can be extended to representations of $SU(1,1)$. This is done using \cref{formulaparaA} as a definition for the image under the representation of the map $s$ (defined before the aforementioned proposition).  
	
	Fix  $\rho_1$ and $\rho_2$ two irreducible   representations of $\isocuno$ into  $\iso(\hi_\C)_o$  such that $\ell(\rho_1)=\ell(\rho_2)=t$. With the conventions of the previous section, suppose without lost of generality, that  $\rho_i$ share the two distinguished points $\eta_i\in\partial\hi_\C$. That is,  for every $g(\lambda,b)$, $\rho_i(\lambda,b)(\eta_1)=\eta_1$ and for every $g(\lambda,0)$, $\rho_i(\lambda,0)(\eta_2)=\eta_2.$
	
	In a matrix representation with respect to the decomposition $$\C\eta_1\oplus\C\eta_2\oplus(\eta_1^\perp\cap\eta_2^\perp),$$ by \cref{representacionenbloques},  $\rho_i(\lambda,b)$ has the shape, 
	$$\begin{pmatrix}
		\lambda^t&-\lambda^t|c_i(\lambda,b)|^2/2+i\Delta_i(\lambda,b)&-\lambda^t\langle\pi_i(\lambda,b)({\boldsymbol{\cdot}}),c_i(\lambda,b)\rangle\\
		0&\lambda^{-t}&0\\
		0&c_i(\lambda,b)&\pi_i(\lambda,d)\end{pmatrix},$$ 
	
	and the isometry $\rho_i(\sigma)$ has the representation 
	$$\begin{pmatrix}
		0&\nu_i^{-1}&0\\
		\nu_i&0&0\\
		0&0&A_i\end{pmatrix},$$ 
	where $\nu_i>0$ and, by \cref{formulaparaA},  $$A_ic_i(b)=\nu_i K_i(b)c_i(1,-1/b).$$

	Define a model for the hyperbolic space in the following way. Consider $\C^2$ as $\C\eta_1\oplus\C\eta_2$  and consider the Hilbert space $L=H_1\oplus H_2$,  where $H_i=\eta_1^\perp\cap\eta_2^\perp$.  Define  the form $Q$ in $\C\eta_1\oplus\C\eta_2\oplus L$ which is $\C$ linear in the first entry, antilinear in the second and  that is given by 
	\begin{enumerate}
		\item $Q|_{H_i}=\langle\boldsymbol{\cdot},\boldsymbol{\cdot}\rangle_i$.
		\item $Q(H_1,H_2)=0$. 
		\item $Q(\eta_i,H_j)=0$, for $i,j=1,2$. 
		\item $Q(\eta_i,\eta_i)=0,$ for $i=1,2$.
		\item $Q(\eta_1,\eta_2)=1$.
	\end{enumerate}
	This defines a  strongly non-degenerate form of signature $(1,\infty)$ in $$\C\eta_1\oplus\C\eta_2\oplus L. $$	
	
	Define  $c(b)=c_1(b)\oplus c_2(b)$ and $\pi(\lambda,b)=\pi_1(\lambda,b)\oplus\pi_2(\lambda,b)$, for every $b\in\R$ and $\lambda>0$. Observe that $\pi$ is a unitary representation of the group $P$ on  $L$.  
	
	Define $\rho(\lambda,0)$ as the isometry  represented by 
	$$\begin{pmatrix}
		\lambda^t&0&0\\
		0&\lambda^{-t}&0\\
		0&0&\pi(\lambda,0)\end{pmatrix}$$ 
	and  $\rho(1,b)$ as the isometry represented by 
	$$\begin{pmatrix}
		1&-|c(b)|^2/2+i\Delta(b)&-\langle\pi(b)({\boldsymbol{\cdot}}),c(b)\rangle\\
		0&1&0\\
		0&c(b)&\pi(b)\end{pmatrix},$$ 
	where $\Delta(b)=\Delta_1(b)+\Delta_2(b)$. Observe that the  transformations $\rho(\lambda,0)$ and $\rho(1,b)$ are isometries of $\hi_\C,$
the hyperbolic space associated to $\C\eta_1\oplus\C\eta_2\oplus L $ and $Q$. 	
	Denote   $K(b)=-|c(b)|^2/2+i\Delta(b)$ and 
	$K_i(b)=-|c_i(b)|^2/2+i\Delta_i(b)$.   
	
		The next  proposition is a consequence of  \cref{representacionenbloques} and \cref{propiedadesdeK}.
	\begin{prop}\label{propiedadesdeKyDelta}
		If $c$, $\pi$, $K$ and $K_i$ are defined as above, then for every $\lambda>0$ and $b,d\in\R$, the following properties hold. 
		\begin{enumerate}{
			}{}
			\item $
			Im(\langle c(b),c(d) \rangle)=\Delta(b-d)-\Delta(b)+\Delta(d)$.
			\item $Re(\langle c(b), c(d)\rangle)=-\frac{|c(b-d)|^2}{2}+\frac{|c(b)|^2}{2}+\frac{|c(d)|^2}{2}. $
			\item 	$K(b)=K_1(b)+K_2(b)$.
			\item $K(\lambda b)=\lambda^tK(b).$
			\item $K(-b)=\overline{K(b)}.$
			\item $K(b+d)=K(b)+K(d)+\langle c(d),c(-b)\rangle$.
			\item $\pi(\lambda,0)c(b)=\lambda^{-t}c(\lambda^2b).$
			\item $c(b+d)=c(b)+\pi(b)c(d).$
		\end{enumerate} 	
	\end{prop}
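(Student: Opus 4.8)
The plan is to exploit that every identity in the statement is an \emph{additivity} relation, and that the two sets of data $(c_i,\pi_i,K_i,\Delta_i)$ attached to the irreducible representations $\rho_1,\rho_2$ each satisfy the full list of identities in \cref{representacionenbloques} and \cref{propiedadesdeK}. The construction glues them orthogonally: since $Q(H_1,H_2)=0$, the restriction of $Q$ to $L=H_1\oplus H_2$ is the orthogonal direct sum of $\langle\,\cdot\,,\cdot\,\rangle_1$ and $\langle\,\cdot\,,\cdot\,\rangle_2$. Hence, for all $b,d\in\R$,
$$\langle c(b),c(d)\rangle=\langle c_1(b),c_1(d)\rangle_1+\langle c_2(b),c_2(d)\rangle_2,\qquad |c(b)|^2=|c_1(b)|^2+|c_2(b)|^2.$$
With $\Delta=\Delta_1+\Delta_2$ this immediately gives $K=K_1+K_2$ (property 3), since $K=-|c|^2/2+i\Delta$ and likewise for each $K_i$.

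First I would dispatch the ``intrinsic'' relations 4, 5, 6 by applying the corresponding items of \cref{propiedadesdeK} to $K_1$ and $K_2$ separately and adding. For 4 and 5 one uses $K_i(\lambda b)=\lambda^tK_i(b)$ and $K_i(-b)=\overline{K_i(b)}$, summing via $K=K_1+K_2$; for 6 one adds $K_i(b+d)=K_i(b)+K_i(d)+\langle c_i(d),c_i(-b)\rangle$ over $i$ and recombines the cross terms into $\langle c(d),c(-b)\rangle$ using the orthogonal splitting above. The relation 8 is the direct sum over $i$ of the affine cocycle identity $c_i(b+d)=c_i(b)+\pi_i(b)c_i(d)$ (item 4 of \cref{representacionenbloques}), and 7 is the direct sum of item 6 of \cref{representacionenbloques}, where the crucial point is that the scaling isomorphism is $\chi_i(\lambda)=\lambda^t$ for \emph{both} factors, so a single exponent $t$ appears on the right-hand side.

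For the sesquilinear relations 1 and 2 I would apply items 9 and 10 of \cref{representacionenbloques} to each factor and sum. Item 9 gives $\mathrm{Im}\langle c_i(b),c_i(d)\rangle=\Delta_i(b-d)-\Delta_i(b)+\Delta_i(d)$ (after exchanging the roles of $b$ and $d$ in the stated convention), which adds to 1 via $\Delta=\Delta_1+\Delta_2$. Item 10 gives the real-part identity per factor, and summing yields 2; here one also records that $|c_i(-x)|^2=|c_i(x)|^2$ — read off from $K_i(-x)=\overline{K_i(x)}$ — so that $|c_i(d-b)|^2$ may be replaced by $|c_i(b-d)|^2$ to match the stated form, and one uses $\mathrm{Re}\langle c_i(d),c_i(b)\rangle=\mathrm{Re}\langle c_i(b),c_i(d)\rangle$.

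The computation is routine and there is no genuine obstacle. The only point demanding care is bookkeeping: one must confirm that the displacement hypothesis $\ell(\rho_1)=\ell(\rho_2)=t$ is exactly what forces a \emph{common} scaling exponent in 4 and 7 — were the two displacements different, these two identities would fail — and that the orthogonality $Q(H_1,H_2)=0$ is what permits all inner products and norms to split as sums without cross terms, so that each per-factor identity transfers verbatim to the glued data.
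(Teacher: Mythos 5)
Your proof is correct and is essentially the paper's own argument: the paper states this proposition as an immediate consequence of \cref{representacionenbloques} and \cref{propiedadesdeK}, and your verification --- applying those identities factor by factor and summing, with the orthogonality $Q(H_1,H_2)=0$ ensuring that inner products and norms split without cross terms --- is exactly the intended reasoning. Your two bookkeeping observations (the common displacement $t$ is what allows a single exponent in items 4 and 7, and the sign/argument conventions in items 1 and 2 reconcile via $|c_i(-x)|=|c_i(x)|$ and the symmetry of the real part) are precisely the points one must check.
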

		
	\begin{lem}\label{kessiempredistintodecero}
		For every $b\neq0$, $K(b)\neq 0$. 		
	\end{lem}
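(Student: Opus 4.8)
The plan is to argue by contradiction, exploiting the fact that both the real and the imaginary part of $K$ split into separate, sign-controlled contributions coming from $\rho_1$ and $\rho_2$. Since $K(-b)=\overline{K(b)}$ by \cref{propiedadesdeKyDelta}, it suffices to treat $b>0$. Suppose $K(b)=0$ for such a $b$. Because $c(b)=c_1(b)\oplus c_2(b)$ lies in the $Q$-orthogonal sum $L=H_1\oplus H_2$, we have $|c(b)|^2=|c_1(b)|^2+|c_2(b)|^2$, so that $\operatorname{Re}K(b)=-\tfrac12\bigl(|c_1(b)|^2+|c_2(b)|^2\bigr)$. Vanishing of the real part forces $c_1(b)=c_2(b)=0$, and vanishing of the imaginary part gives $\Delta_1(b)+\Delta_2(b)=0$.

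The first main input is that each $\rho_i(1,b)$ is parabolic for $b\neq 0$: indeed $g(1,b)$ is parabolic in $\isocuno$, and $\rho_i$ preserves the type by \cref{preservaeltipo}. As already used in the proof of \cref{tesmenorque2}, a parabolic $\rho_i(1,b)$ cannot have $c_i(b)=0$ and $\Delta_i(b)=0$ simultaneously; otherwise its representative would be $\operatorname{diag}(1,1,\pi_i(b))$, fixing both $\eta_1$ and $\eta_2$ and hence the geodesic joining them, which contradicts parabolicity. Since we have just concluded $c_i(b)=0$, this forces $\Delta_i(b)\neq 0$ for each $i$.

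The second input is the sign of each $\Delta_i$. The scaling relation $K_i(\lambda b)=\lambda^t K_i(b)$ of \cref{propiedadesdeK}, read on imaginary parts, gives $\Delta_i(b)=b^t\Delta_i(1)$ for $b>0$, while the irreducibility of $\rho_i$ yields $\Delta_i(1)\geq 0$ (the observation following \cref{cartandimensioninfinita}). Hence for $b>0$ each $\Delta_i(b)\geq 0$, and combined with $\Delta_i(b)\neq 0$ we obtain $\Delta_i(b)>0$ for both $i$. Therefore $\Delta(b)=\Delta_1(b)+\Delta_2(b)>0$, contradicting $\Delta(b)=0$; this proves $K(b)\neq 0$.

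The only delicate ingredient is the nonnegativity $\Delta_i(1)\geq 0$: it is exactly what rules out a cancellation $\Delta_1(b)=-\Delta_2(b)\neq 0$ that would otherwise be compatible with $\Delta(b)=0$. Everything else is bookkeeping with the orthogonal decomposition of $L$ and the structural properties recorded in \cref{representacionenbloques}, so I expect this sign information—rather than any computation—to be the crux of the argument.
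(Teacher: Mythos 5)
Your proof is correct and is essentially the paper's own argument: both use the orthogonal splitting $|c(b)|^2=|c_1(b)|^2+|c_2(b)|^2$ to kill the $c_i$, the sign condition $\Delta_i(1)\geq 0$ from the observation after \cref{cartandimensioninfinita}, and parabolicity of $\rho_i(1,b)$ to reach a contradiction. The only difference is cosmetic (you apply parabolicity before the sign condition, the paper after, and you spell out the reduction to $b>0$ that the paper leaves implicit).
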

	\begin{proof}
		Suppose $K(1)=0$. 	By \cref{cartandimensioninfinita}, $\Delta_i(1)\geq0$, therefore $\Delta_i(1)=0$. The isometries $\rho_i(1,1)$ are parabolic, thus  $c_i(1)\neq0$, which is a contradiction. 
	\end{proof}
	
	Observe that $g(\lambda,0)g(1,b)=g(\gamma,0)g(1,d)$ if, and only if, $\lambda=\gamma$ and $b=d,$ and that $g(\lambda,0)g(1,b)=g(1,\lambda^2b)g(\lambda,0)$. 
	It will be shown that with the formulas for  $\rho(\lambda,0)$ and $\rho(1,b)$ it  is possible to define an homomorphism on $P$. 
	\begin{lem}\label{computohorrible1}
		For every $\lambda,\gamma>0$ and $b,d\in\R$, the following identities hold. 
		\begin{enumerate}
			\item
			$	K(\lambda^{-1}\gamma^{-1}b+d)=	K(d)+\gamma^{-t}K(\lambda^{-1}b)+\langle \pi(\gamma,0)c(\gamma^{-1}d),c(-\lambda^{-1}b)\rangle.$
			
			\item 		$	\pi(\lambda^{-1}b)\pi(\gamma,0)c(\gamma^{-1}d)+\gamma^{-t}c(\lambda^{-1}b) =
			\pi(\gamma,0)c(\lambda^{-1}\gamma^{-2}b+\gamma^{-1}d).	$
			
		\end{enumerate}	
	\end{lem}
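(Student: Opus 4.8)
The plan is to verify both identities by straightforward algebra from the relations collected in \cref{propiedadesdeKyDelta}, together with the unitarity of $\pi$. These are precisely the scalar ($K$) and vectorial ($c$) relations among $K$, $c$ and $\pi$ that are needed to check that the formulas defining $\rho(\lambda,0)$ and $\rho(1,b)$ combine into a homomorphism of $P$ (and, afterwards, that this extension is compatible with $\rho(\sigma)$). Writing them out against the clean relations of \cref{propiedadesdeKyDelta} is what turns the block-matrix computation into short bookkeeping, so I would never expand the $3\times 3$ blocks directly.

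The one auxiliary fact I would record first is the scaling relation
$$\langle c(\mu x), c(\mu y)\rangle = \mu^{t}\langle c(x), c(y)\rangle, \qquad \mu>0.$$
This follows from property 7 of \cref{propiedadesdeKyDelta}, namely $c(\mu^{2}x)=\mu^{t}\pi(\mu,0)c(x)$, and the fact that $\pi(\mu,0)$ is unitary: indeed $\langle c(\mu^{2}x),c(\mu^{2}y)\rangle=\mu^{2t}\langle\pi(\mu,0)c(x),\pi(\mu,0)c(y)\rangle=\mu^{2t}\langle c(x),c(y)\rangle$, and since $\mu\mapsto\mu^{2}$ is onto $\R_{>0}$ this gives the claim for every positive scalar.

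For the first identity I would apply property 6 with the splitting $\lambda^{-1}\gamma^{-1}b+d$, obtaining $K(\lambda^{-1}\gamma^{-1}b)+K(d)+\langle c(d),c(-\lambda^{-1}\gamma^{-1}b)\rangle$. The homogeneity of $K$ (property 4) rewrites $K(\lambda^{-1}\gamma^{-1}b)=\gamma^{-t}K(\lambda^{-1}b)$, which matches the first two summands of the right-hand side. It then remains to match the two inner products, and this is exactly where the scaling relation enters: using property 7 to write $\pi(\gamma,0)c(\gamma^{-1}d)=\gamma^{-t}c(\gamma d)$ and the scaling relation with $\mu=\gamma$, the right-hand inner product reduces to the left-over term $\langle c(d),c(-\lambda^{-1}\gamma^{-1}b)\rangle$. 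For the second identity I would instead start from the right-hand side: property 7 gives $\pi(\gamma,0)c(\lambda^{-1}\gamma^{-2}b+\gamma^{-1}d)=\gamma^{-t}c(\lambda^{-1}b+\gamma d)$, and the additive cocycle relation (property 8) splits this as $\gamma^{-t}c(\lambda^{-1}b)+\gamma^{-t}\pi(\lambda^{-1}b)c(\gamma d)$. The first term is the second summand of the left-hand side, while the second matches $\pi(\lambda^{-1}b)\pi(\gamma,0)c(\gamma^{-1}d)$ after factoring out the unitary $\pi(\lambda^{-1}b)$ and applying property 7 once more.

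I expect the only genuine difficulty to be bookkeeping: correctly tracking the powers $\gamma^{\pm t}$ and inserting and cancelling the unitaries $\pi(\cdot)$ in the right order, since a naive matrix expansion also produces $\gamma^{-2t}$ terms that must be reorganized. The conceptual content is entirely contained in \cref{propiedadesdeKyDelta} and the unitarity of $\pi$, so the work is to choose the order of substitutions that keeps everything scalar-valued and never forces an explicit block product.
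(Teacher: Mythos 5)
Your proof is correct and follows essentially the same route as the paper: both identities are deduced purely from the relations of \cref{propiedadesdeKyDelta} (properties 4, 6, 7 and 8) together with the unitarity of $\pi(\cdot,0)$. Your auxiliary scaling relation $\langle c(\mu x),c(\mu y)\rangle=\mu^{t}\langle c(x),c(y)\rangle$ is just a packaged form of the paper's inline manipulation with $\gamma^{\pm t/2}$ and $\pi(\gamma^{\pm1/2},0)$, so the two arguments coincide up to bookkeeping.
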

	\begin{proof}
		By \cref{propiedadesdeKyDelta}, 
		$$\begin{array}{rcl}
			K(\lambda^{-1}\gamma^{-1}b+d)&=&\\
			K(d)+K(\lambda^{-1}\gamma^{-1}b)+\langle c(d),c(-\lambda^{-1}\gamma^{-1}b)\rangle
			&=&\\
			K(d)+K(\gamma^{-1}\lambda^{-1}b)+\langle\gamma^{-t/2} c(d),\gamma^{t/2}c(-\gamma^{-1}\lambda^{-1}b)\rangle
			&=&\\
			K(d)+K(\gamma^{-1}\lambda^{-1}b)+\langle \pi(\gamma^{1/2},0)c(\gamma^{-1}d),\pi(\gamma^{-1/2},0)c(-\lambda^{-1}b)\rangle	&=&\\
			K(d)+\gamma^{-t}K(\lambda^{-1}b)+\langle \pi(\gamma,0)c(\gamma^{-1}d),c(-\lambda^{-1}b)\rangle
		\end{array}$$
		and  $$\begin{array}{rcl}
			\pi(\lambda^{-1}b)\pi(\gamma,0)c(\gamma^{-1}d)+\gamma^{-t}c(\lambda^{-1}b) &=&\\
			\gamma^{-t}\pi(\lambda^{-1}b)c(\gamma d)+\gamma^{-t}c(\lambda^{-1}b)&=&\\
			\gamma^{-t}c(\lambda^{-1}b+\gamma d)&=&\\
			\pi(\gamma,0)c(\lambda^{-1}\gamma^{-2}b+\gamma^{-1}d).	
		\end{array}$$
	\end{proof}
	\begin{lem}\label{computohorrible2}
		For every $\gamma,\lambda>0$, $b,d\in \R$ and $u\in\eta_1^\perp\cap\eta_2^\perp, $
		$$\begin{array}{rcl}	\gamma^t\langle u,c(-\gamma^{-1}d)\rangle+\langle u,\pi(-\gamma^{-1}d)\pi(\gamma^{-1},0)c(-\lambda^{-1}b)\rangle&=&\\\gamma^t\langle u,c(1,-\lambda^{-1}\gamma^{-2}b-\gamma^{-1}d)\rangle.\end{array}$$
	\end{lem}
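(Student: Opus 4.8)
The plan is to reduce the claimed scalar identity to a single vector identity in $\eta_1^\perp\cap\eta_2^\perp$ and then verify that identity using only the two cocycle-type relations from \cref{propiedadesdeKyDelta}. First I would observe that $\gamma^t$ is a positive real number and that $\langle u,\boldsymbol{\cdot}\rangle$ is conjugate-linear in its second argument, so the scalar may be moved inside the bracket and the two summands on the left-hand side may be gathered into a single expression of the form $\langle u,\,\boldsymbol{\cdot}\,\rangle$. It therefore suffices to establish the vector equality
$$\gamma^t c(-\gamma^{-1}d)+\pi(-\gamma^{-1}d)\pi(\gamma^{-1},0)c(-\lambda^{-1}b)=\gamma^t c(-\lambda^{-1}\gamma^{-2}b-\gamma^{-1}d)$$
in $\eta_1^\perp\cap\eta_2^\perp$, which no longer involves the vector $u$.

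Next I would simplify the middle term. By property 7 of \cref{propiedadesdeKyDelta}, namely $\pi(\lambda,0)c(b)=\lambda^{-t}c(\lambda^2 b)$, applied with $\lambda=\gamma^{-1}$, one gets $\pi(\gamma^{-1},0)c(-\lambda^{-1}b)=\gamma^t c(-\lambda^{-1}\gamma^{-2}b)$. Pulling the real scalar $\gamma^t$ past the unitary map $\pi(-\gamma^{-1}d)$, the middle term becomes $\gamma^t\,\pi(-\gamma^{-1}d)\,c(-\lambda^{-1}\gamma^{-2}b)$.

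Then I would invoke the affine cocycle relation $c(b+d)=c(b)+\pi(b)c(d)$ (property 8 of \cref{propiedadesdeKyDelta}), with first argument $-\gamma^{-1}d$ and second argument $-\lambda^{-1}\gamma^{-2}b$, which yields $\pi(-\gamma^{-1}d)c(-\lambda^{-1}\gamma^{-2}b)=c(-\lambda^{-1}\gamma^{-2}b-\gamma^{-1}d)-c(-\gamma^{-1}d)$. Substituting this into the expression above, the two copies of $\gamma^t c(-\gamma^{-1}d)$ cancel and precisely $\gamma^t c(-\lambda^{-1}\gamma^{-2}b-\gamma^{-1}d)$ survives, which is exactly the desired right-hand side. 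Restoring the bracket $\langle u,\,\boldsymbol{\cdot}\,\rangle$ then gives the statement of the lemma.

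I do not expect any conceptual obstacle here: the computation is purely formal and rests on the same two identities (the $\pi(\lambda,0)$-scaling rule and the affine cocycle rule) that already drive \cref{computohorrible1}. The only point requiring genuine care is the bookkeeping of the parameters $\lambda,\gamma,b,d$ and of the various exponents of $\gamma$, together with the harmless but necessary remark that $\gamma^t$ is real, so that it commutes freely both with the inner product and with the unitary operators $\pi(\cdot)$.
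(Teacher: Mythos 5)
Your proposal is correct and follows essentially the same route as the paper's proof: both apply the scaling rule $\pi(\lambda,0)c(b)=\lambda^{-t}c(\lambda^{2}b)$ (property 7 of \cref{propiedadesdeKyDelta}) to rewrite $\pi(\gamma^{-1},0)c(-\lambda^{-1}b)$ as $\gamma^{t}c(-\lambda^{-1}\gamma^{-2}b)$, and then the affine cocycle relation $c(b+d)=c(b)+\pi(b)c(d)$ (property 8) to collapse the sum. Your only cosmetic deviation is to strip away the bracket $\langle u,\boldsymbol{\cdot}\rangle$ and prove the underlying vector identity first, which the paper does implicitly inside the inner products; the observation that $\gamma^{t}$ is real, and hence passes through the conjugate-linear slot, is the same in both.
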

	\begin{proof}
		By \cref{propiedadesdeKyDelta}, 
		$$\begin{array}{rcl}
			\gamma^t\langle u,c(-\gamma^{-1}d)\rangle+\langle u,\pi(-\gamma^{-1}d)\pi(\gamma^{-1},0)c(-\lambda^{-1}b)\rangle&=&\\
			\gamma^t\langle u,c(-\gamma^{-1}d)\rangle+\gamma^t\langle u,\pi(-\gamma^{-1}d)c(-\lambda^{-1}\gamma^{-2}b)\rangle &=&\\
			\gamma^t\langle u,c(1,-\lambda^{-1}\gamma^{-2}b-\gamma^{-1}d)\rangle. 
		\end{array}$$
		
	\end{proof}

	\begin{prop}\label{bylambdasonhomomorfismos}
		The map $g(\lambda,b)\mapsto \rho(\lambda,0)\rho(1,\lambda^{-1}b)$ is a homomorphism and for every $x\in\hi_\C$, the map 
		$g(\lambda,b)\mapsto \rho(\lambda,0)\rho(1,\lambda^{-1}b)x$ is continuous. 
	\end{prop}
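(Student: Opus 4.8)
The plan is to prove the homomorphism property by reducing it to a single block-matrix identity, and to deduce continuity directly from the continuity statements of \cref{representacionenbloques}. Write $M(\lambda)$ for the block matrix defining $\rho(\lambda,0)$ and $N(b)$ for the one defining $\rho(1,b)$, both with respect to $\C\eta_1\oplus\C\eta_2\oplus L$. Since the law of $P$ is $g(\lambda,b)g(\gamma,d)=g(\lambda\gamma,\gamma^{-1}b+\lambda d)$ and the candidate map sends $g(\lambda,b)$ to $M(\lambda)N(\lambda^{-1}b)$, the image of a product is $M(\lambda\gamma)\,N(\lambda^{-1}\gamma^{-2}b+\gamma^{-1}d)$. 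Thus the whole assertion that the map is a homomorphism is equivalent to the single identity
\[
M(\lambda)N(\lambda^{-1}b)\,M(\gamma)N(\gamma^{-1}d)=M(\lambda\gamma)\,N(\lambda^{-1}\gamma^{-2}b+\gamma^{-1}d),
\]
which I would verify by computing both sides entry by entry.

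Both sides are block upper-triangular for the flag $\C\eta_1\subset\C\eta_1\oplus L\subset\C\eta_1\oplus\C\eta_2\oplus L$, so the $(2,1)$, $(3,1)$ and $(2,3)$ entries vanish on both sides, and the scalar $(1,1)$, $(2,2)$ entries are $(\lambda\gamma)^{\pm t}$, immediate from $\lambda^t\gamma^t=(\lambda\gamma)^t$. This leaves exactly four entries to match, and each is supplied by a result already in hand. The $(3,3)$ operator is a product of factors of $\pi$ and matches because $\pi=\pi_1\oplus\pi_2$ is a unitary representation of $P$. The $(3,2)$ vector in $L$ is precisely the content of the second identity of \cref{computohorrible1}. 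The scalar $(1,2)$ entry, built from $K$, reduces (after the scaling $K(\gamma x)=\gamma^tK(x)$ and the relation $c(-x)=-\pi(-x)c(x)$) to the first identity of \cref{computohorrible1}. Finally the $(1,3)$ functional, evaluated on an arbitrary test vector $u$, becomes exactly \cref{computohorrible2} once one rewrites $-\langle\pi(f)u,c(f)\rangle=\langle u,c(-f)\rangle$ using unitarity of $\pi$ together with the $c(-f)$ relation.

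I expect the genuine work, and the only real obstacle, to be the bookkeeping in the $(1,2)$ and $(1,3)$ entries: these mix $K$, $c$ and the sesquilinear cross terms, and one must rearrange the conjugations by $\pi(\gamma,0)$ and the $\lambda,\gamma$ weights into exactly the shape packaged by the two preceding lemmas, using the cocycle identities of \cref{propiedadesdeKyDelta}. Conceptually this is nothing but the verification that $\rho$ respects the semidirect structure of $P$, namely that $\rho(\cdot,0)$ is multiplicative, $\rho(1,\cdot)$ is additive, and $\rho(\lambda,0)\rho(1,b)\rho(\lambda,0)^{-1}=\rho(1,\lambda^2b)$; the single displayed identity simply bundles these three checks, and \cref{computohorrible1,computohorrible2} are designed so that no further computation is required.

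For the continuity statement I would fix $x\in\hi_\C$ together with a lift $\tilde x$ and show that $(\lambda,b)\mapsto M(\lambda)N(\lambda^{-1}b)\tilde x$ is continuous in the Hilbert norm. Indeed the weights $\lambda^{\pm t}$ are continuous, the maps $b\mapsto K(b)$ and $b\mapsto c(b)$ are continuous, and $b\mapsto\pi(b)$ and $\lambda\mapsto\pi(\lambda,0)$ are strongly continuous, all by \cref{representacionenbloques}; these enter each entry only through compositions and inner products, which preserve continuity. Composing with the projectivization map, which is continuous for the topology on $\hi_\C$ fixed in the preliminaries, then gives continuity of $g(\lambda,b)\mapsto\rho(\lambda,0)\rho(1,\lambda^{-1}b)x$, completing the proof.
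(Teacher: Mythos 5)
Your proposal is correct and follows essentially the same route as the paper's proof: the paper likewise reduces the homomorphism property to the single identity $\rho(1,\lambda^{-1}b)\rho(\gamma,0)\rho(1,\gamma^{-1}d)=\rho(\gamma,0)\rho(1,\lambda^{-1}\gamma^{-2}b+\gamma^{-1}d)$ (it cancels $\rho(\lambda,0)$ first, but this is the same computation as your four-factor, entry-by-entry check) and verifies it on the summands $\C\eta_1$, $\C\eta_2$ and $L$, with exactly the assignment you describe --- the representation property of $\pi$ for the unitary block, the second identity of \cref{computohorrible1} for the $c$-entry, the first identity of \cref{computohorrible1} (after the $K$-scaling and the relation $\pi(f)c(-f)=-c(f)$) for the $K$-entry, and \cref{computohorrible2} for the functional entry. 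The only divergence is the continuity argument, where the paper reduces to sequences $g(\lambda_i,b_i)\to e$ and checks $\cosh d\bigl(\rho(\lambda_i,0)\rho(1,\lambda_i^{-1}b_i)x,x\bigr)=\bigl|B\bigl(\rho(\lambda_i,0)\rho(1,\lambda_i^{-1}b_i)\tilde x,\tilde x\bigr)\bigr|\to 1$ via the explicit formula, while you establish norm continuity of the lifted orbit map and compose with the projectivization; both rest on the same continuity inputs from \cref{representacionenbloques}, so this is an inessential difference.
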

	\begin{proof}
		Observe that $$\begin{array}{rcl}
			g(\lambda,b)g(\gamma,d)&=&g(\lambda,0)g(1,\lambda^{-1}b)g(\gamma,0)g(1,\gamma^{-1}d)\\&=&g(\lambda,0)g(\gamma,0)g(1,\lambda^{-1}\gamma^{-2}b)g(1,\gamma^{-1}d)\\
			&=&g(\lambda\gamma,0)g(1,\lambda^{-1}\gamma^{-2}b+\gamma^{-1}d)	.
		\end{array}
		$$
		Therefore the  first  claim of the proposition  is that $$\rho(\lambda,0)\rho(1,\lambda^{-1}b)\rho(\gamma,0)\rho(1,\gamma^{-1}d)=\rho(\lambda\gamma,0)\rho(1,\lambda^{-1}\gamma^{-2}b+\gamma^{-1}d)	.$$
		It is clear that $\lambda\mapsto \rho(\lambda,0)$ is a homomorphism, thus to show the previous claim is equivalent to show that 
	 $$\rho(1,\lambda^{-1}b)\rho(\gamma,0)\rho(1,\gamma^{-1}d)=\rho(\gamma,0)\rho(1,\lambda^{-1}\gamma^{-2}b+\gamma^{-1}d)	.$$
		This will be done by comparing the columns of the matrix representations of both sides of the identities with respect to the decomposition $\C\eta_1\oplus\C\eta_2\oplus L$.
		
		It is clear from the definition of $\rho$ that for $\eta_1$, 
		$$\begin{array}{rcl}\rho(1,\lambda^{-1}b)\rho(\gamma,0)\rho(1,\gamma^{-1}d)\eta_1&=&\gamma^t\eta_1\\&=&\rho(\gamma,0)\rho(1,\gamma^{-2}b+\gamma^{-1}d)\eta_1	.\end{array}$$
		
		By \cref{propiedadesdeKyDelta} and \cref{computohorrible1}, for $\eta_2$, 
		$$\begin{array}{rcl}\rho(1,\lambda^{-1}b)\rho(\gamma,0)\rho(1,\gamma^{-1}d)\eta_2&=&\\
			\rho(1,\lambda^{-1}b)\rho(\gamma,0)\Big(K(\gamma^{-1}d)\eta_1+\eta_2+c(\gamma^{-1}d)\Big)&=&\\
			\rho(1,\lambda^{-1}b)\Big(\gamma^tK(\gamma^{-1}d)\eta_1+\gamma^{-t}\eta_2+\pi(\gamma,0)c(\gamma^{-1}d)\Big)&=&\\
			\Big(K(d)+\gamma^{-t}K(\lambda^{-1}b)+\langle \pi(\gamma,0)c(\gamma^{-1}d),c(-\lambda^{-1}b)\rangle \Big)	\eta_1+\\ \gamma^{-t}\eta_2+\pi(\lambda^{-1}b)\pi(\gamma,0)c(\gamma^{-1}d)+\gamma^{-t}c(\lambda^{-1}b)&=&\\

			K(\lambda^{-1}\gamma^{-1}b+d)\eta_1+\gamma^{-t}\eta_2+\pi(\gamma,0)c(\lambda^{-1}\gamma^{-2}b+\gamma^{-1}d) &=&\\
			
			\rho(\gamma,0)\Big(K(\lambda^{-1}\gamma^{-2}b+\gamma^{-1}d)\eta_1+\eta_2+c(\lambda^{-1}\gamma^{-2}b+\gamma^{-1}d) \Big)&=&\\
			\rho(\gamma,0)\rho(1,\lambda^{-1}\gamma^{-2}b+\gamma^{-1}d)\eta_2	.\end{array}$$
		And last, by \cref{propiedadesdeKyDelta} and \cref{computohorrible2},  for $u\in\eta_1^\perp\cap\eta_2^\perp$, 
		$$\begin{array}{rcl}\rho(1,\lambda^{-1}b)\rho(\gamma,0)\rho(1,\gamma^{-1}d)u&=&\\
			\rho(1,\lambda^{-1}b)\rho(\gamma,0)\big(\langle u,c(-\gamma^{-1}d)\rangle\eta_1 +\pi(\gamma^{1}d)u\big)&=&\\
			\rho(1,\lambda^{-1}b)\big(\gamma^t\langle u,c(-\gamma^{-1}d)\rangle\eta_1 +\pi(\gamma,0)\pi(\gamma^{-1}d)u\big)&=&\\
			
			\Big(\gamma^t\langle u,c(-\gamma^{-1}d)\rangle+\langle\pi(\gamma,0)\pi(\gamma^{-1}d)u,c(-\lambda^{-1}b)\rangle \Big)\eta_1 +\\
			\pi(\lambda^{-1}b)\pi(\gamma,0)\pi(\gamma^{-1}d)u&=&\\
			
			\Big(\gamma^t\langle u,c(-\gamma^{-1}d)\rangle+\langle u,\pi(-\gamma^{-1}d)\pi(\gamma^{-1},0)c(-\lambda^{-1}b)\rangle \Big)\eta_1 +\\
			\pi(\gamma, \gamma^{-1}\lambda^{-1}b+d)u&=&\\
			
			\gamma^t\langle u,c(1,-\lambda^{-1}\gamma^{-2}b-\gamma^{-1}d)\rangle\eta_1+\pi(\gamma,0)\pi(\lambda^{-1}\gamma^{-2}b+\gamma^{-1}d)u &=&\\
			
			\rho(\gamma,0)\Big(\langle u,c(1,-\lambda^{-1}\gamma^{-2}b-\gamma^{-1}d)\rangle\eta_1+\pi(\lambda^{-1}\gamma^{-2}b+\gamma^{-1}d)u\Big) &=&\\
			\rho(\gamma,0)\rho(1,\lambda^{-1}\gamma^{-2}b+\gamma^{-1}d)u	.\end{array}$$
		Therefore the map $$g(\lambda,b)\mapsto \rho(\lambda,0)\rho(1,\lambda^{-1}b)\in \iso(\mathbf{H}_\C^\infty)_o$$ is a homomorphism. 
		
		For the second claim of the proposition  it is enough to show that for every $x\in\hi_\C$, the map $g(\lambda,b)\mapsto \rho(\lambda,0)\rho(1,\lambda^{-1}b)x$ is continuous around the identity in $P$. 
		Suppose $g_i=g(\lambda_i,b_i)\to Id$ in $P$, then 
		$$\frac{1}{4}|B\big(g(\lambda_i,b_i)(\eta_1+\eta_2),\eta_1+\eta_2\big)|^2=\frac{1}{4}\big((\lambda_i+\lambda_i^{-1})^2+b_i^{2}\big)\to 1,$$
		or equivalently,   $$(\lambda_i-\lambda_i^{-1})^2+b_i^2\to 0.$$  
		Therefore $\lambda_i\to1$ and $b_i\to 0$. 
		If $x=\alpha\eta_1+\beta\eta_2 + u$ is such that $Q(x,x)=1$, then 
		$$\begin{array}{rcl}\rho(\lambda_i,0)\rho(1,\lambda_i^{-1}b_i)x&=&\\
			\rho(\lambda_i,0)\big(\alpha+ \beta K(\lambda_i^{-1}b_i)+\langle u,c(-\lambda_i^{-1}b_i)\rangle\big)\eta_1+&&\\
			\rho(\lambda_i,0)\Big(\beta \eta_2+\beta c(\lambda_i^{-1}b_i)+\pi(\lambda_i^{-1}b_i)u  \Big)&=&\\
			\lambda^t_i\big(\alpha+ \beta K(\lambda_i^{-1}b_i)+\langle u,c(-\lambda_i^{-1}b_i)\rangle\big)\eta_1+	\lambda_i^{-t}\beta \eta_2+&&\\
			\pi(\lambda_i,0)\big(\beta c(\lambda_i^{-1}b_i)+\pi(\lambda_i^{-1}b_i)u \big)&=&\\
			\big(\lambda^t_i\alpha+ \beta K(b_i)+\langle u,\pi(\lambda_i^{-1},0)c(-\lambda_i b_i)\rangle\big)\eta_1+	\lambda_i^{-t}\beta\eta_2+&&\\
			\lambda_i^{-t}\beta c(\lambda_ib_i)+\pi(\lambda_i,0)\pi(\lambda_i^{-1}b_i)u.
		\end{array}$$
		Therefore, since  $$g(\lambda,b)\mapsto\pi(\lambda,b)=\pi_1(\lambda,b)\oplus\pi_2(\lambda,b)$$ is orbitally continuous,  
		$$\begin{array}{rcl}
			\lim\limits_{i\to\infty}\left|B\big(  \rho(\lambda_i,0)\rho(1,\lambda_i^{-1}b_i)x,x \big)\right|&=&\\
			\lim\limits_{i\to\infty}|\overline{\beta}	\big(\lambda^t_i\alpha+ \beta  K(b_i)+\langle u,\pi(\lambda^{-1},0)c(-\lambda_i b_i)\rangle\big)+\overline{\alpha}\lambda_i^{-t}\beta+&&\\
			\langle 	\lambda_i^{-t}\beta c(\lambda_ib_i)+\pi(\lambda_i,0)\pi(\lambda_i^{-1}b_i)u, u\rangle|&=&\\
			|\overline{\beta}\alpha +\overline{\alpha}\beta +\langle u,u\rangle|	&=&1.
		\end{array}$$
	\end{proof}
	Now it is possible to define the representation  $P\xrightarrow{\rho}\iso(\mathbf{H}^\infty_\C)_o$ given by  $$\rho(\lambda,b)=\rho(\lambda,0)\rho(1,\lambda^{-1}b).$$ The next  results are devoted to prove that $\rho$ can be extended to a homomorphism defined on $SU(1,1)$. 
	\begin{lem}\label{unicopuntofijo}
		The only point fixed  in $\hi_\C\cup\partial\hi_\C$ by $\rho$   is $\eta_1$. 
	\end{lem}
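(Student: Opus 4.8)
The plan is to exploit that the parabolic group $P$ contains a genuinely hyperbolic one-parameter subgroup, namely $\{\rho(\lambda,0)\}_{\lambda>0}$, whose fixed set in $\hi_\C\cup\partial\hi_\C$ can be pinned down completely; every $\rho(P)$-fixed point must then lie in this set, and a short computation removes all candidates but $\eta_1$. First I would record that $\eta_1$ is fixed: from the defining matrices of $\rho(\lambda,0)$ and $\rho(1,b)$ (with respect to $\C\eta_1\oplus\C\eta_2\oplus L$) both maps are upper triangular, so they preserve the line $\C\eta_1$ and hence fix $[\eta_1]\in\partial\hi_\C$. In particular $\rho(P)$ fixes $\eta_1$, which makes the displacement/character criterion of \cref{kerneldec} applicable.

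Next I would fix some $\lambda_0>1$ and determine the fixed points of $\rho(\lambda_0,0)$. Its Busemann character equals $\ln(\lambda_0^{t})=t\ln\lambda_0\neq0$, so by \cref{kerneldec} together with the trichotomy \cref{tricotomia} the isometry $\rho(\lambda_0,0)$ is hyperbolic. Since a projective fixed point is the same as an eigenline, I would solve $\rho(\lambda_0,0)v=\mu v$ for $v=\alpha\eta_1+\beta\eta_2+w$ with $w\in L$. The block-diagonal form yields $\lambda_0^{t}\alpha=\mu\alpha$, $\lambda_0^{-t}\beta=\mu\beta$ and $\pi(\lambda_0,0)w=\mu w$. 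If $\alpha$ or $\beta$ is nonzero then $\mu\in\{\lambda_0^{t},\lambda_0^{-t}\}$, whose moduli differ from $1$; as $\pi(\lambda_0,0)$ preserves the form on $L$ its eigenvalues have modulus $1$, which forces $w=0$, and then $v$ is a multiple of $\eta_1$ or of $\eta_2$. Otherwise $\alpha=\beta=0$ and $v=w\in L$; but $L$ is $Q$-orthogonal to the positive direction $\eta_1+\eta_2$, so $Q$ is negative definite on $L$ and $[w]\notin\hi_\C\cup\partial\hi_\C$. Hence the fixed set of $\rho(\lambda_0,0)$ in $\hi_\C\cup\partial\hi_\C$ is exactly $\{\eta_1,\eta_2\}$, and therefore the $\rho(P)$-fixed set is contained in $\{\eta_1,\eta_2\}$.

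It then remains to rule out $\eta_2$. From the defining matrix of $\rho(1,b)$ one has $\rho(1,b)\eta_2=K(b)\eta_1+\eta_2+c(b)$, so $[\eta_2]$ can be fixed by $\rho(1,b)$ only if $K(b)=0$. By \cref{kessiempredistintodecero}, $K(b)\neq0$ for every $b\neq0$, whence $\rho(1,b)$ moves $\eta_2$ as soon as $b\neq0$. Thus $\eta_2$ is not fixed by $\rho(P)$, and $\eta_1$ is the unique point of $\hi_\C\cup\partial\hi_\C$ fixed by $\rho$.

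The only genuinely delicate point — the step I would treat most carefully — is the fixed-point count for $\rho(\lambda_0,0)$ in the second paragraph: a priori the unitary block $\pi(\lambda_0,0)$ could contribute further fixed projective lines, and the argument hinges on the observation that any such eigenvector lies in the negative-definite subspace $L$ and so represents a point outside $\hi_\C\cup\partial\hi_\C$. This is the explicit, coordinate-level incarnation of the north--south dynamics of a hyperbolic isometry implicit in \cref{tricotomia}, but it is cleaner to read it off from the eigenvalue structure than to invoke general CAT(-1) boundary dynamics.
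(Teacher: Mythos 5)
Your proof is correct and takes essentially the same route as the paper: reduce to the fixed-point set $\{\eta_1,\eta_2\}$ of the hyperbolic isometries $\rho(\lambda,0)$, then exclude $\eta_2$ because $K(b)\neq 0$ by \cref{kessiempredistintodecero}. The only difference is one of detail: the paper asserts the isometries are ``hyperbolic by construction'' and invokes the standard two-fixed-point property, whereas you verify that fixed-point count explicitly through the eigenvalue structure of the block decomposition.
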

	\begin{proof}
		The isometries $\rho(\lambda,0)$ are hyperbolic by construction, therefore the only other candidate to be fixed by $\rho$ is $\eta_2$, but $\rho(1,b)$ does not fix it  because $K(1)\neq0$ (see \cref{kessiempredistintodecero}).  
	\end{proof}
	Observe that 
	$$K(b)=\frac{Q(\eta_2,\rho(1,b)\eta_1)}{|Q(\eta_2,\rho(1,b)\eta_1)|^2}Q(\rho(1,b)\eta_2,\eta_2)$$  is also true for the representation $\rho$. After a conjugation by an isometry $\rho(\gamma,0)$ if necessary, 
	assume  that $|K(1)|=1$. Notice  that this conjugation does not change  the argument of $K(1)$. 
	
	The following is the uniqueness  part of the GNS construction (see Theorem C.1.4 of  \cite{propertyTbekka}). 
	\begin{lem}\label{defineunmapeounitario}
		Let $X$ be a set and let $H$ be a Hilbert space. Suppose $f$ and $g$ are two functions $X\rightarrow H$  such that their images are total in $H$. If  for  every $x,y\in X$, $\langle f(x),f(y)\rangle =\langle g(x),g(y)\rangle$,
		then there exists $A$,  a unitary map of $H$, such that  $Af(x)= g(x)$.
	\end{lem}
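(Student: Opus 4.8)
The plan is to prove this via the standard uniqueness argument for the Gelfand--Naimark--Segal construction, building the unitary operator $A$ first on the linear span of the image of $f$ and then extending by continuity. First I would define $A$ on the dense subspace $\text{span}(f(X))$ by prescribing $A\big(\sum_i \lambda_i f(x_i)\big) = \sum_i \lambda_i g(x_i)$, which is the only sensible candidate given the requirement $Af(x)=g(x)$. The key observation making this well defined is that the hypothesis $\langle f(x),f(y)\rangle = \langle g(x),g(y)\rangle$ controls norms: for any finite linear combination,
$$\Big\|\sum_i \lambda_i f(x_i)\Big\|^2 = \sum_{i,j}\lambda_i\overline{\lambda_j}\langle f(x_i),f(x_j)\rangle = \sum_{i,j}\lambda_i\overline{\lambda_j}\langle g(x_i),g(x_j)\rangle = \Big\|\sum_i \lambda_i g(x_i)\Big\|^2.$$
In particular, if $\sum_i \lambda_i f(x_i)=0$ then $\sum_i \lambda_i g(x_i)$ has norm zero, hence vanishes, so the formula does not depend on the chosen representation of the vector as a combination of the $f(x_i)$. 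This simultaneously establishes that $A$ is well defined and that it is isometric on $\text{span}(f(X))$.

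Next I would record that $A$ is linear (immediate from the defining formula) and isometric, hence bounded with operator norm $1$ on the dense subspace $\text{span}(f(X))$, which is total in $H$ by hypothesis. A bounded operator on a dense subspace extends uniquely to a bounded operator on all of $H$, and the extension remains isometric since the norm identity passes to limits. Thus $A$ extends to an isometry of $H$ satisfying $Af(x)=g(x)$ for every $x\in X$. The polarization identity then guarantees $\langle Au, Av\rangle = \langle u,v\rangle$ for all $u,v$ in the completion, so $A$ preserves the inner product.

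It remains to verify that $A$ is surjective, so that it is genuinely unitary rather than merely an isometry. Here I would use the symmetry of the hypotheses: the same argument applied with the roles of $f$ and $g$ interchanged produces an isometry $A'$ with $A'g(x)=f(x)$, and by the uniqueness of the extension $A'$ is a two-sided inverse of $A$ on the respective total images. Concretely, $A'A f(x) = A' g(x) = f(x)$ and $A A' g(x) = A f(x) = g(x)$, so $A'A$ and $AA'$ are the identity on the total sets $f(X)$ and $g(X)$ respectively; since both are bounded and agree with the identity on a total subset, they equal the identity on $H$. Hence $A$ is invertible with inverse $A'$, and being an isometry with a two-sided inverse it is unitary. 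I expect the main (and really the only) subtle point to be the well-definedness step, where one must resist the temptation to assume the $f(x_i)$ are linearly independent; the correct argument routes entirely through the norm identity derived from the inner-product hypothesis, which handles any linear dependence automatically.
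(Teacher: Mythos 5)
Your proof is correct and is exactly the standard GNS-uniqueness argument: the paper itself offers no proof of this lemma, citing instead Theorem C.1.4 of the Property (T) book of Bekka--de la Harpe--Valette, whose proof proceeds precisely as you do (well-definedness and isometry on the span via the inner-product identity, extension by density, surjectivity from the symmetric construction). The subtle point you flag --- routing well-definedness through the norm identity rather than assuming linear independence of the $f(x_i)$ --- is indeed the crux, and you handle it correctly.
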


	\begin{prop}\label{definiciondeA} The map   	$$Ac(b)=K(b)c(-1/b)$$ defines a unitary map in $L'=\overline{\langle \{c(b)\}_{b\in\R}\rangle}$ such that $A^2=Id.$
	\end{prop}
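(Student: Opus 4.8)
The plan is to obtain $A$ from the uniqueness part of the GNS construction, \cref{defineunmapeounitario}, applied to the set $X=\R\setminus\{0\}$ and the two maps $f(b)=c(b)$ and $g(b)=K(b)c(-1/b)$ with values in $L'$. Both families are total in $L'$: the image of $f$ spans a dense subspace by the very definition of $L'$, and since $b\mapsto-1/b$ is an involution of $\R\setminus\{0\}$ and $K(b)\neq0$ for $b\neq0$ by \cref{kessiempredistintodecero}, the image of $g$ spans the same subspace. Hence it suffices to prove the equality of kernels
\begin{equation}\label{eq:Aunitary}
\langle c(b),c(d)\rangle=K(b)\overline{K(d)}\,\langle c(-1/b),c(-1/d)\rangle,\qquad b,d\neq0;
\end{equation}
\cref{defineunmapeounitario} then produces a unitary map $A$ of $L'$ with $Ac(b)=K(b)c(-1/b)$. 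The value $b=0$ causes no trouble, as $c(0)=0$ is sent to $0$.

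The first step is to rewrite inner products purely in terms of $K$. Substituting $-b$ for $b$ in property 6 of \cref{propiedadesdeKyDelta}, namely $K(b+d)=K(b)+K(d)+\langle c(d),c(-b)\rangle$, and then conjugating with the help of $K(-x)=\overline{K(x)}$, yields
$$\langle c(b),c(d)\rangle=K(b-d)-K(b)-\overline{K(d)}.$$
As a check, taking $d=b$ returns $K(0)-2\operatorname{Re}K(b)=|c(b)|^2$. With this formula both sides of \eqref{eq:Aunitary} become expressions in values of $K$, and I would verify the identity using only the scaling law $K(\lambda x)=\lambda^tK(x)$ for $\lambda>0$ and the symmetry $K(-x)=\overline{K(x)}$ from \cref{propiedadesdeKyDelta}, together with the normalization $|K(1)|=1$ imposed just before the statement.

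To illustrate the mechanism, take $b,d>0$. Then $K(b)\overline{K(d)}=(bd)^t|K(1)|^2=(bd)^t$, while writing $1/d-1/b=(bd)^{-1}(b-d)$ and applying the formula above expands the bracket on the right of \eqref{eq:Aunitary} to $(bd)^{-t}K(b-d)-b^{-t}\overline{K(1)}-d^{-t}K(1)$. Multiplying by $(bd)^t$ returns exactly $K(b-d)-K(b)-\overline{K(d)}$, the left-hand side. The remaining sign patterns of $(b,d)$ are treated identically; the only delicate point, and the main obstacle, is that the scaling law is valid for positive factors only, so each negative factor must be split off and absorbed into a conjugation through $K(-x)=\overline{K(x)}$. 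Once this bookkeeping is carried out the identity collapses, precisely because the prefactor $K(b)\overline{K(d)}$ cancels the powers of $bd$ and $|K(1)|^2=1$.

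It remains to check $A^2=\mathrm{Id}$, for which it is enough to evaluate on the total family $\{c(b)\}$. Since $A$ is linear and $-1/(-1/b)=b$,
$$A^2c(b)=K(b)\,Ac(-1/b)=K(b)K(-1/b)\,c(b).$$
A short computation with $K(\lambda x)=\lambda^tK(x)$ and $K(-x)=\overline{K(x)}$ gives $K(b)K(-1/b)=|K(1)|^2=1$ for every $b\neq0$, so that $A^2c(b)=c(b)$; by density $A^2=\mathrm{Id}$ on $L'$.
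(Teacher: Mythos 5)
Your proposal is correct and takes essentially the same route as the paper's proof: both obtain $A$ from the GNS uniqueness lemma (\cref{defineunmapeounitario}) by verifying the kernel identity $\langle Ac(b),Ac(d)\rangle=\langle c(b),c(d)\rangle$ through a sign-case analysis, and both conclude $A^2=\mathrm{Id}$ from $K(b)K(-1/b)=|K(1)|^2=1$. The only difference is bookkeeping: where the paper compares real and imaginary parts separately in terms of $|c(1)|^2$ and $\Delta(1)$ across three sign cases, you first condense the relations of \cref{propiedadesdeKyDelta} into the single formula $\langle c(b),c(d)\rangle=K(b-d)-K(b)-\overline{K(d)}$, which turns the same case check into a formal manipulation of the scaling and conjugation laws of $K$.
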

	\begin{proof}
		Due to \cref{defineunmapeounitario}, it is enough to show that $\langle Ac(b),Ac(d)\rangle=\langle c(b), c(d)\rangle$. Suppose $b\neq d$. By \cref{propiedadesdeKyDelta}, on one side, 
		$$\begin{array}{rcl}
			\langle A(c(b)),A(c(d))\rangle
			&=& \\
			K(b)\overline{K(d)}\langle c(-{1}/{b}),c(-{1}/{d})\rangle
			&=&\\	K(b)\overline{K(d)}\Big(-\frac{|c(-1/b+1/d)|^2}{2}+\frac{|c(1/b)|^2}{2}+\frac{|c(1/d)|^2}{2}\Big)+&&\\ K(b)\overline{K(d)}i\Big(\Delta(-1/b+1/d)-\Delta(-1/b)+\Delta(-1/d) \Big)&=&\\
			|b|^t|d|^tK\left(\frac{b}{|b|}\right)K\left(-\frac{d}{|d|}\right)\Big(-\frac{|d-b|^t}{|bd|^t}+\frac{1}{|b|^t}+\frac{1}{|d|^t}\Big)\frac{|c(1)|^2}{2}+&&\\
			|b|^t|d|^tK\left(\frac{b}{|b|}\right)K\left(-\frac{d}{|d|}\right)i\Big(\frac{|b-d|^t(b-d)|bd|}{|bd|^tbd|b-d|} + \frac{b}{|b|^t|b|}-\frac{d}{|d|^t|d|}\Big)\Delta(1)
			&=&\\
			
			K\left(\frac{b}{|b|}\right)K\left(-\frac{d}{|d|}\right)\Big(-{|d-b|^t}+{|d|^t}+{|b|^t}\Big)\frac{|c(1)|^2}{2}+&&\\
			
			K\left(\frac{b}{|b|}\right)K\left(-\frac{d}{|d|}\right)i\Big(\frac{|b-d|^t(b-d)|bd|}{bd|b-d|} + \frac{|d|^tb}{|b|}-\frac{|b|^td}{|d|}\Big)\Delta(1).
		\end{array}$$
		On the other side, 
		$$\begin{array}{rcl}
			\langle c(b),c(d)\rangle&=&\\
			-\frac{|c(b-d)|^2}{2}+\frac{|c(b)|^2}{2}+\frac{|c(d)|^2}{2}
			+i\big(\Delta(b-d)-\Delta(b)+\Delta(d)\big)&=&\\
			\big(-|b-d|^t+|b|^t+|d|^t\big)\frac{|c(1)|^2}{2}+i\big(\frac{|b-d|^t(b-d)}{|b-d|}-\frac{|b|^tb}{|b|}+\frac{|d|^td}{|d|}\big)\Delta(1)	.
		\end{array}$$
		There are three  cases to analyse:
		1) $b>0>d$, 2) $b> d>0$ and 3) $b< d<0$. 
		
		1)	If $b>0>d$, 	
		$$\begin{array}{rcl}
			K\left(\frac{b}{|b|}\right)K\left(-\frac{d}{|d|}\right)&=&K(1)^2
		\end{array}$$
		and 
		$$\begin{array}{rcl}
			\frac{|b-d|^t(b-d)|bd|}{bd|b-d|} + \frac{|d|^tb}{|b|}-\frac{|b|^td}{|d|}&=&
			-|b-d|^t+|b|^t+|d|^t	. 	\end{array}$$
		
		Therefore 
		$$\begin{array}{rcl}
			\langle Ac(b), Ac(d)\rangle &=&	\\
			K(1)^2	\big(	-|b-d|^t+|b|^t+|d|^t\big)\left(\frac{|c(1)|^2}{2}+i\Delta(1)\right)&=&\\
			\big(-|b-d|^t+|b|^t+|d|^t\big)K(1)^2(-\overline{K(1)})&=&\\
			-	\big(-|b-d|^t+|b|^t+|d|^t\big)K(1)
		\end{array}$$
		and 
		$$\begin{array}{rcl}
			\langle c(b),c(d)\rangle &=&\\
			\big(	-|b-d|^t+|b|^t+|d|^t\big)\big(\frac{|c(1)|^2}{2}-i\Delta(1)\big)&=&\\
			-\big(-|b-d|^t+|b|^t+|d|^t\big)K(1).
		\end{array}$$
		
		2)	If $b>d>0$, 
		$$\begin{array}{rcl}
			K\left(\frac{b}{|b|}\right)K\left(-\frac{d}{|d|}\right)&=&1
		\end{array}$$
		and 
		$$\begin{array}{rcl}
			\frac{|b-d|^t(b-d)|bd|}{bd|b-d|} -\frac{|b|^td}{|d|}+ \frac{|d|^tb}{|b|}&=&
			|b-d|^t-|b|^t+|d|^t. 	\end{array}$$
		Thus 
		$$\begin{array}{rcl}
			\langle A(c(b)),A(c(d))\rangle
			&=& \\
			\big(-{|d-b|^t}+{|d|^t}+{|b|^t}\big)\frac{|c(1)|^2}{2}+
			\big(	|b-d|^t+|d|^t-|b|^t\big)\Delta(1)
		\end{array}$$
		and 
		$$\begin{array}{rcl}
			\langle c(b),c(d)\rangle
			&=& \\
			\big(-{|d-b|^t}+{|d|^t}+{|b|^t}\big)\frac{|c(1)|^2}{2}+
			i\big(|b-d|^t-|b|^t+|d|^t\big)\Delta(1).
		\end{array}$$
		3) If $b< d<0$, 
		$$\begin{array}{rcl}
			K\left(\frac{b}{|b|}\right)K\left(-\frac{d}{|d|}\right)&=&1
		\end{array}$$
		and 
		$$\begin{array}{rcl}
			\frac{|b-d|^t(b-d)|bd|}{bd|b-d|} + \frac{|d|^tb}{|b|}-\frac{|b|^td}{|d|}&=&
			-|b-d|^t-|d|^t+|b|^t	. 	\end{array}$$
		Therefore 
		$$\begin{array}{rcl}
			\langle A(c(b)),A(c(d))\rangle &=&\\\big(-{|d-b|^t}+{|d|^t}+{|b|^t}\big)\frac{|c(1)|^2}{2}+i\big(-|b-d|^t-|d|^t+|b|^t\big)\Delta(1)
		\end{array}	$$
		and 
		$$	\begin{array}{rcl}
			\langle c(b),c(d)
			\rangle &=&\\
			\big(-|b-d|^t+|b|^t+|d|^t\big)\frac{|c(1)|^2}{2}+i\big(-|b-d|^t+|b|^t-|d|^t\big)\Delta(1).
		\end{array}$$
The case $b=d$ is an immediate consequence of \cref{propiedadesdeKyDelta}.

		By \cref{defineunmapeounitario}, $A$ induces a unitary map  on $L$. 
		And last, observe  that $$\begin{array}{rcl}
			A^2(c(b))&=&		K(b)K(-1/b)c(b)\big)\\&=&
			|b|^t	K(\frac{b}{|b|})\frac{1}{|b|^t}K(-\frac{b}{|b|})c(b)\\
			&=&c(b).
		\end{array}$$
		
	\end{proof}	
	Consider now $\hi_\C$ as the hyperbolic space associated to $\C\eta_1\oplus\C\eta_2\oplus L'$ and the restriction of  the form $Q$ defined in  the beginning of this section. 
	Denote by  $\tilde{\sigma}\in \iso(\hi_\C)_o$ the order two  transformation  represented by 
	$$\begin{pmatrix}
		0&1&0\\
		1&0&0\\
		0&0&A\\
	\end{pmatrix}.$$
	The claim is that  the representation $\rho$ can be extended to a representation of $SU(1,1)$ using $\tilde{\sigma}$. That is to say,  if   $g(1,b)$, with $b\in\R$, $g(\lambda,0)$, with $\lambda>0$, and $$s=\begin{pmatrix}
		0&i\\
		i&0
	\end{pmatrix}$$ are understood  as elements of $SU(1,1)$, then the map defined by : \begin{enumerate}
		\item $T(g(\lambda,0))=\rho(\lambda,0)$, 
		\item $T(-g(\lambda,0))=\rho(\lambda,0)$,
		\item $T(g(1,b))=\rho(1,b), $
		\item $T(s)=\tilde{\sigma}$,
	\end{enumerate}
	where $\rho(\lambda,0)$, $\rho(1,b)$ and $\tilde{\sigma}$ are interpreted as elements of $\iso(\hi_\C)_o$, is a homomorphism. 
	
	In order to prove that $T$ is a homomorphism it is enough to show that $T$ is coherent with the relations of \cref{relacionesygeneradores}, that is to show that, for $\lambda>0$ and $b\in\R$, 
	\begin{enumerate}
		\item $\rho(\lambda,0)=\tilde{\sigma}\rho(1,\lambda^{-1})\tilde{\sigma}\rho(1,\lambda)\tilde{\sigma}\rho(1,\lambda^{-1}).$
		\item $\rho(\lambda,0)=\tilde{\sigma}\rho(1,-\lambda^{-1})\tilde{\sigma}\rho(1,-\lambda)\tilde{\sigma}\rho(1,-\lambda^{-1}).$	
		\item $\lambda\mapsto \rho(\lambda,0)$ is a homomorphism. 
		\item $b\mapsto\rho(1,b) $ is a homomorphism. 
		\item $\rho(\lambda,0)\rho(1,b)\rho(\lambda^{-1},0)=\rho(1,\lambda^2b)$.
		\item $\tilde{\sigma}^2=Id. $
	\end{enumerate}
	Observe that $T$ is coherent with  the points from 3., 4. and 5. because $\rho$ is a homomorphism defined on $P$ (see \cref{bylambdasonhomomorfismos}). 
	By \cref{definiciondeA},  point 6. holds, therefore the only two families of relations left to be verified are that for every $b>0$,  
	$$ \begin{array}{rcl}\rho(b,0)&=&\sigma \rho(1,b^{-1})\sigma\rho(1,b)\sigma \rho(1,b^{-1})\\&=&\sigma \rho(1,-b^{-1})\sigma\rho(1,-b)\sigma \rho(1,-b^{-1}). \end{array}$$
	\begin{lem}\label{lemacomputos1}
		For $\epsilon=\pm 1$ and for every $b>0$,
		\begin{enumerate}
			\item	$1+K(\epsilon b)K(\epsilon b^{-1})+\langle Ac(\epsilon b),c(-\epsilon b^{-1})\rangle=0.$	
			\item	$	K(\epsilon b)c(\epsilon b^{-1})+\pi(\epsilon b^{-1})Ac(\epsilon b)=0.$
		\end{enumerate}
	\end{lem}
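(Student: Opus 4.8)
The plan is to verify both identities by direct substitution, using the explicit formula $Ac(b)=K(b)c(-1/b)$ from \cref{definiciondeA} together with the structural identities of \cref{propiedadesdeKyDelta}, treating the two signs $\epsilon=\pm1$ simultaneously via the scaling law $K(\lambda x)=\lambda^tK(x)$ and the conjugation rule $K(-x)=\overline{K(x)}$. First I would record the consequences of these laws that collapse every relevant quantity to $K(1)$. Writing $\epsilon b=b\cdot\epsilon$ and $\epsilon b^{-1}=b^{-1}\cdot\epsilon$ with $b^{\pm1}>0$, one gets $K(\epsilon b)=b^tK(\epsilon)$ and $K(\epsilon b^{-1})=b^{-t}K(\epsilon)$, where $K(\epsilon)$ equals $K(1)$ if $\epsilon=1$ and $\overline{K(1)}$ if $\epsilon=-1$; in particular $K(\epsilon b)K(\epsilon b^{-1})=K(\epsilon)^2$. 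Since $Re(K(-1))=Re(K(1))$ and $Re(K(y))=-|c(y)|^2/2$, one also has $|c(\epsilon b^{-1})|^2=-2Re(K(\epsilon b^{-1}))=-2b^{-t}Re(K(1))$.

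The second identity is purely formal and uses no normalization. Substituting the formula for $A$ and noting that $-1/(\epsilon b)=-\epsilon b^{-1}$ gives $\pi(\epsilon b^{-1})Ac(\epsilon b)=K(\epsilon b)\,\pi(\epsilon b^{-1})c(-\epsilon b^{-1})$. I would then apply the affine cocycle relation $c(x+y)=c(x)+\pi(x)c(y)$ of \cref{propiedadesdeKyDelta} with $x=\epsilon b^{-1}$ and $y=-\epsilon b^{-1}$: since $c(0)=0$, this yields $\pi(\epsilon b^{-1})c(-\epsilon b^{-1})=-c(\epsilon b^{-1})$, so that $\pi(\epsilon b^{-1})Ac(\epsilon b)=-K(\epsilon b)c(\epsilon b^{-1})$, which is exactly the negative of the first term. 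Hence identity 2 holds.

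For the first identity I would expand the inner product using $Ac(\epsilon b)=K(\epsilon b)c(-\epsilon b^{-1})$ and the linearity of $\langle\cdot,\cdot\rangle$ in its first argument, obtaining $\langle Ac(\epsilon b),c(-\epsilon b^{-1})\rangle=K(\epsilon b)|c(-\epsilon b^{-1})|^2=K(\epsilon b)|c(\epsilon b^{-1})|^2$. Plugging in the collapsed quantities from the first step gives $K(\epsilon b)|c(\epsilon b^{-1})|^2=-2K(\epsilon)Re(K(1))=-K(\epsilon)\bigl(K(1)+\overline{K(1)}\bigr)=-K(\epsilon)^2-|K(1)|^2$, the last equality holding for both $\epsilon=\pm1$ because $K(\epsilon)\in\{K(1),\overline{K(1)}\}$. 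Combining this with $K(\epsilon b)K(\epsilon b^{-1})=K(\epsilon)^2$, the whole left-hand side of 1 telescopes to $1+K(\epsilon)^2-K(\epsilon)^2-|K(1)|^2=1-|K(1)|^2$.

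The one genuinely substantive point — and the only place where geometric content enters — is that this last expression vanishes precisely because of the normalization $|K(1)|=1$ imposed just before the lemma (after conjugating by a suitable $\rho(\gamma,0)$). So the hard part is not any single calculation but keeping the bookkeeping of the scaling factors $b^{\pm t}$ and the conjugation $K(-1)=\overline{K(1)}$ straight; once those are in place, identity 2 is a formal cancellation coming from the cocycle relation, and identity 1 reduces to the single scalar identity $1-|K(1)|^2=0$.
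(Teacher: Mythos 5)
Your proposal is correct and follows essentially the same route as the paper: both parts are proved by substituting $Ac(\epsilon b)=K(\epsilon b)c(-\epsilon b^{-1})$, using the scaling and conjugation laws of \cref{propiedadesdeKyDelta} to collapse everything to $K(\epsilon)$, invoking the cocycle relation with $c(0)=0$ for part 2, and for part 1 combining $2\,Re(K(1))=-|c(1)|^2$ with the normalization $|K(1)|=1$ (which the paper uses implicitly when it factors the expression as $K(\epsilon)\big(K(-\epsilon)+K(\epsilon)+|c(-1)|^2\big)$, since $K(\epsilon)K(-\epsilon)=|K(1)|^2$). Your explicit isolation of $1-|K(1)|^2=0$ as the only place the normalization enters is a slightly cleaner bookkeeping of the same argument.
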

	\begin{proof}
		Indeed,  
		$$\begin{array}{rcl}
			1+K(\epsilon b)K(\epsilon b^{-1})+\langle Ac(\epsilon b),c(-\epsilon b^{-1})\rangle&=&\\	
			1+K(\epsilon)^2+K(\epsilon b)\langle c(-\epsilon b^{-1}),c(-\epsilon b^{-1})\rangle&=&\\
			1+K(\epsilon)^2+b^{-t}K(\epsilon)|c(-1)|^2&=&\\
			K(\epsilon)\Big(K(-\epsilon)+K(\epsilon)+|c(-1)|^2\Big)&=&0.
		\end{array}
		$$
		and 
		$$\begin{array}{rcl}
			K(\epsilon b)c(\epsilon b^{-1})+\pi(\epsilon b^{-1})Ac(\epsilon b)&=&\\
			K(\epsilon b)\big(c(\epsilon b^{-1})+\pi(\epsilon b^{-1})c(-\epsilon b^{-1})\big)&=&0. \end{array}$$
	\end{proof}
	
	\begin{lem}\label{relacionnomequedaclaro}
		If $b>0$ and $\epsilon=\pm1$, then
		$$ 
		\rho(b,0)=\tilde{\sigma}\rho(1,\epsilon b^{-1})\tilde{\sigma}\rho(1,\epsilon b)\tilde{\sigma}\rho(1,\epsilon b^{-1}).$$
	\end{lem}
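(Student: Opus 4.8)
The plan is to verify the stated identity as an equality of isometries by computing how the right-hand side acts on each summand of the decomposition $\C\eta_1\oplus\C\eta_2\oplus L'$ and checking that it agrees with $\rho(b,0)$ up to one common unit scalar. Throughout I would use the explicit block actions: $\tilde\sigma$ interchanges $\eta_1$ and $\eta_2$ and acts as $A$ on $L'$; $\rho(1,d)\eta_1=\eta_1$, $\rho(1,d)\eta_2=K(d)\eta_1+\eta_2+c(d)$, and for $w\in L'$ one has $\rho(1,d)w=\langle w,c(-d)\rangle\eta_1+\pi(d)w$ (this is the form used in the proof of \cref{bylambdasonhomomorfismos}, obtained by moving the adjoint of $\pi(d)$ and using $\pi(-d)c(d)=-c(-d)$). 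I would also record that, since $\chi(\lambda)=\lambda^t$ and $K(\lambda b)=\lambda^tK(b)$, the normalisation $|K(1)|=1$ forces $K(\epsilon b)=b^tK(\epsilon)$ with $|K(\epsilon)|=1$.

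First I would apply the composite to $\eta_1$, reading it from the rightmost factor: $\rho(1,\epsilon b^{-1})$ fixes $\eta_1$, then $\tilde\sigma$ sends it to $\eta_2$, then $\rho(1,\epsilon b)$ produces $K(\epsilon b)\eta_1+\eta_2+c(\epsilon b)$, and $\tilde\sigma$ turns this into $\eta_1+K(\epsilon b)\eta_2+Ac(\epsilon b)$. Applying $\rho(1,\epsilon b^{-1})$, the coefficient of $\eta_1$ becomes $1+K(\epsilon b)K(\epsilon b^{-1})+\langle Ac(\epsilon b),c(-\epsilon b^{-1})\rangle$ and the $L'$-component becomes $K(\epsilon b)c(\epsilon b^{-1})+\pi(\epsilon b^{-1})Ac(\epsilon b)$; both vanish by \cref{lemacomputos1}. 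Only $K(\epsilon b)\eta_2$ survives, and the last $\tilde\sigma$ yields $K(\epsilon b)\eta_1=b^tK(\epsilon)\eta_1=K(\epsilon)\rho(b,0)\eta_1$. The computation on $\eta_2$ is parallel, applying \cref{lemacomputos1} with $b$ and $b^{-1}$ interchanged, and produces $K(\epsilon b^{-1})\eta_2=K(\epsilon)\rho(b,0)\eta_2$. Thus the two sides already agree on $\C\eta_1\oplus\C\eta_2$ up to the common factor $K(\epsilon)$.

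The substantial step is the summand $L'$. Since $L'=\overline{\langle\{c(d)\}_{d\in\R}\rangle}$ by the construction preceding \cref{definiciondeA}, and all the maps involved preserve $L'$ and are continuous, it suffices to establish the resulting operator identity on the total family $\{c(d)\}$. Carrying a vector $u\in L'$ through the six factors and repeatedly using $Ac(d)=K(d)c(-1/d)$, $A^2=\mathrm{Id}$ with $A^{*}=A$ (\cref{definiciondeA}), the cocycle relation $c(b+d)=c(b)+\pi(b)c(d)$ and the homogeneity $\pi(\lambda,0)c(d)=\lambda^{-t}c(\lambda^2 d)$ (\cref{propiedadesdeKyDelta}), the coefficients of $\eta_1$ and $\eta_2$ of the output again collapse to $0$: here one rewrites $\langle A\pi(\epsilon b^{-1})u,c(-\epsilon b)\rangle$ by unitarity and $\pi(-d)c(d)=-c(-d)$, and invokes \cref{lemacomputos1} together with $K(\epsilon b)K(\epsilon b^{-1})=K(\epsilon)^2$. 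What remains is to identify the $L'$-component of the output with $K(\epsilon)\pi(b,0)u$.

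This last reduction is the main obstacle: it is a lengthy but mechanical bookkeeping in which the rank-one term $-K(\epsilon)^2\langle u,c(-\epsilon b^{-1})\rangle\,c(-\epsilon b)$ arising from $A\pi(\epsilon b^{-1})Ac(\epsilon b)$ must be absorbed against the triple product $A\pi(\epsilon b^{-1})A\pi(\epsilon b)A\pi(\epsilon b^{-1})u$, the cancellation being precisely what \cref{lemacomputos1} encodes. Once the $L'$-component is shown to equal $K(\epsilon)\pi(b,0)u$, the two sides coincide as linear maps up to the factor $K(\epsilon)$; since $|K(\epsilon)|=1$ they induce the same element of $\iso(\hi_\C)_o$, which is exactly the claimed equality $\rho(b,0)=\tilde\sigma\rho(1,\epsilon b^{-1})\tilde\sigma\rho(1,\epsilon b)\tilde\sigma\rho(1,\epsilon b^{-1})$.
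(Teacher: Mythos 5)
Your framework is sound and is essentially the paper's: verify the relation as an identity of linear maps on $\C\eta_1\oplus\C\eta_2\oplus L'$, up to a single unimodular scalar, using \cref{lemacomputos1} to collapse the compositions. Your computations on $\eta_1$ and $\eta_2$ are correct (the paper organizes them slightly differently, first rewriting the relation as $\tilde{\sigma}\rho(b,0)\rho(1,-\epsilon b^{-1})\tilde{\sigma}=\rho(1,\epsilon b^{-1})\tilde{\sigma}\rho(1,\epsilon b)$ so that only three factors appear on each side, and showing the two sides agree up to the factor $K(-\epsilon)$; that difference is cosmetic).

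The problem is exactly at the point you label the ``main obstacle'': the identification of the $L'$-component with $K(\epsilon)\pi(b,0)u$ is the entire substance of the lemma, and you do not carry it out --- you assert that the rank-one term is absorbed against the triple product $A\pi(\epsilon b^{-1})A\pi(\epsilon b)A\pi(\epsilon b^{-1})u$ and that ``the cancellation is precisely what \cref{lemacomputos1} encodes.'' That last claim is not correct, and this is a genuine gap rather than routine bookkeeping. In the paper, after evaluating on the total family $\{c(d)\}$ and using $Ac(d)=K(d)c(-1/d)$ together with the cocycle relations of \cref{propiedadesdeKyDelta}, the matching of the $L'$-components does \emph{not} reduce to the two identities of \cref{lemacomputos1}; it reduces to the additional multiplicative identity
$$K(d)\,K\!\left(\frac{-\epsilon d-b}{d}\right)=K(-\epsilon)\,K(\epsilon b+d),$$
which has to be proved separately by a case analysis on the signs of the arguments (it depends on $K(x)=|x|^{t}K(x/|x|)$ and $K(-1)=\overline{K(1)}$, so positive and negative arguments behave differently). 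Moreover the computation on $c(d)$ is only valid away from finitely many exceptional values of $d$ (where expressions such as $c(-1/d)$ or $c\bigl(d/(\epsilon b(\epsilon b+d))\bigr)$ degenerate), so one still needs the closing continuity argument of the paper: the two sides, viewed as continuous functions $f_{b_0}(d)$ and $g_{b_0}(d)$, agree off the exceptional set and hence everywhere. Without these two ingredients your argument establishes the claim only on $\C\eta_1\oplus\C\eta_2$, where it carries almost no information about the extension; supplying them is what the proof actually consists of.
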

	\begin{proof}
	The procedure will be to compare  the columns of the  canonical matrix representation of both side of the identities   with respect to the decomposition $\C\eta_1\oplus\C\eta_2\oplus L$. In fact, it will be shown that 
		$$\tilde{\sigma}\rho(b,0)\rho(1,-\epsilon b^{-1})\tilde{\sigma}=\rho(1,\epsilon b^{-1})\tilde{\sigma}\rho(1,\epsilon b).$$
		With a small abuse of notation, keep the notation above for the canonical linear representatives of each of the isometries. 
	Indeed, on one side, 
			$$\begin{array}{rl}
			\tilde{\sigma}\rho(b,0)\rho(1,-\epsilon b^{-1})\tilde{\sigma}(\eta_1)&=	\\
			\tilde{\sigma}\rho(b,0)\Big(K(-\epsilon b^{-1})\eta_1+\eta_2+c(-\epsilon b^{-1})\Big)&=\\
			\tilde{\sigma}\Big(b^tK(-\epsilon b^{-1})\eta_1+b^{-t}\eta_2+b^{-t}c(-\epsilon b)\Big)&=\\
			b^{-t}\eta_1+b^tK(-\epsilon b^{-1})\eta_2+b^{-t}K(-\epsilon b)c(\epsilon b^{-1})&=\\
			b^{-t}\eta_1+K(-\epsilon)\eta_2+K(-\epsilon)c(\epsilon b^{-1}),
		\end{array}
		$$		
		and on the other side, 
		$$
		\rho(1,\epsilon b^{-1})\tilde{\sigma}\rho(1,\epsilon b)(\eta_1)=
		K(\epsilon b^{-1})\eta_1+ \eta_2	+c(\epsilon b^{-1})	
		$$	
		Observe that 
		$$K(-\epsilon)\big(K(\epsilon b^{-1})\eta_1+ \eta_2	+c(\epsilon b^{-1})\big)=b^{-t}\eta_1+K(-\epsilon)\eta_2+K(-\epsilon)c(\epsilon b^{-1})$$
		Therefore as linear transformations, what has to be shown is that 
		$$\tilde{\sigma}\rho(b,0)\rho(1,-\epsilon b^{-1})\tilde{\sigma} =K(-\epsilon )\rho(1,\epsilon b^{-1})\tilde{\sigma}\rho(1,\epsilon b).$$
		
		For $\eta_2$, observe that, 
		$$
		\tilde{\sigma}\rho(b,0)\rho(1,-\epsilon b^{-1})\tilde{\sigma}(\eta_2)=b^t\eta_2
		$$
		and that 
		$$\begin{array}{rl}
			\rho(1,\epsilon b^{-1})\tilde{\sigma}\rho(1,\epsilon b)(\eta_2)&=\\
			\rho(1,\epsilon b^{-1})\tilde{\sigma}\Big(K(\epsilon b)\eta_1+\eta_2+c(\epsilon b)\Big)&=\\
			\rho(1,\epsilon b^{-1})\Big(\eta_1+K(\epsilon b)\eta_2+Ac(\epsilon b)\Big)&=\\
			\big(1+K(\epsilon b)K(\epsilon b^{-1})+\langle Ac(\epsilon b),c(-\epsilon b^{-1})\rangle\big)\eta_1+K(\epsilon b)\eta_2+&\\
			K(\epsilon b)c(\epsilon b^{-1})+\pi(\epsilon b^{-1})Ac(\epsilon b).
		\end{array}$$
		Therefore,  by \cref{lemacomputos1}, 
		$$\tilde{\sigma}\rho(b,0)\rho(1,-\epsilon b^{-1})\tilde{\sigma}(\eta_2) =K(-\epsilon)\rho(1,\epsilon b^{-1})\tilde{\sigma}\rho(1,\epsilon b)(\eta_2).$$
		
		And last, for every $d\in\R\setminus\{0\}$,  
		$$\begin{array}{rl}\tilde{\sigma}\rho(b,0)\rho(1,-\epsilon b^{-1})\tilde{\sigma}(c(d))&=\\
			K(d)\tilde{\sigma}\rho(b,0)\rho(1,-\epsilon b^{-1})c(-d^{-1})&=\\
			K(d)\tilde{\sigma}\rho(b,0)\Big(\langle c(-d^{-1}), c(\epsilon b^{-1})\rangle\eta_1+\pi(-\epsilon b^{-1})c(-d^{-1}) \Big)
			&=\\
			K(d)\tilde{\sigma}\Big(b^t\langle c(-d^{-1}), c(\epsilon b^{-1})\rangle\eta_1+\pi(b,0)\pi(-\epsilon b^{-1})c(-d^{-1}) \Big)&=\\
			K(d)\Big(b^t\langle c(-d^{-1}), c(\epsilon b^{-1})\rangle\eta_2+A\pi(b,0)\pi(-\epsilon b^{-1})c(-d^{-1}) \Big).
		\end{array}$$
		On the other hand, 
		$$\begin{array}{rl}
			\rho(1,\epsilon b^{-1})\tilde{\sigma}\rho(1,\epsilon b)(c(d))&=\\
			\rho(1,\epsilon b^{-1})\tilde{\sigma}\Big(\langle c(d), c(-\epsilon b^{})\rangle\eta_1+\pi(\epsilon b)c(d) \Big)&=\\
			\rho(1,\epsilon b^{-1})\Big(\langle c(d), c(-\epsilon b^{})\rangle\eta_2+A\pi(\epsilon b)c(d) \Big)	&=\\
			\Big(\langle c(d), c(-\epsilon b^{})\rangle K(\epsilon b^{-1})+\langle A\pi(\epsilon b)c(d),c(-\epsilon b^{-1})\rangle\Big) \eta_1 +&\\
			\langle c(d), c(-\epsilon b^{})\rangle\eta_2+	\langle c(d), c(-\epsilon b^{})\rangle c(\epsilon b^{-1}) +\pi(\epsilon b^{-1})A\pi(\epsilon b)c(d) .
		\end{array}$$
		Therefore the claim is that 
		$$\begin{array}{rcl}
			K(-\epsilon)\Big( 	\langle c(d), c(-\epsilon b^{})\rangle\eta_2+	\langle c(d), c(-\epsilon b^{})\rangle c(\epsilon b^{-1}) +\pi(\epsilon b^{-1})A\pi(\epsilon b)c(d)\Big)&=&\\
			K(d)\Big(b^t\langle c(-d^{-1}), c(\epsilon b^{-1})\rangle\eta_2+A\pi(b,0)\pi(-\epsilon b^{-1})c(-d^{-1}) \Big). 
		\end{array}$$
		Observe that 
		$$\begin{array}{rcl}
			K(\epsilon)K(d)b^t\langle c(-d^{-1}), c(\epsilon b^{-1})\rangle&=&\\
			\langle K(d)c(-d^{-1}), K(-\epsilon b)c(b^{-1})\rangle&=&\\
			\langle Ac(d),Ac(-\epsilon b)\rangle &=&\langle c(d),c(-\epsilon b)\rangle . 
		\end{array}$$
		Therefore 
		$$	K(-\epsilon)\langle c(d), c(-\epsilon b^{})\rangle=K(d)b^t\langle c(-d^{-1}), c(\epsilon b^{-1})\rangle.$$
		The only identity remaining  to show is that 
		$$\begin{array}{rcl}
			K(d)A\pi(b,0)\pi(-\epsilon b^{-1})c(-d^{-1})&=&\\
			K(-\epsilon)\Big(\langle	c(d), c(-\epsilon b^{})\rangle c(\epsilon b^{-1}) +\pi(\epsilon b^{-1})A\pi(\epsilon b)c(d) \Big).
		\end{array}$$
		Suppose $0\neq d\neq \epsilon b$. Notice that 
		$$\begin{array}{rcl}
			\pi(\epsilon b^{-1})A\pi(\epsilon b)c(d)&=&\\
			\pi(\epsilon b^{-1})A\big(c(\epsilon b+d)-c(\epsilon b)\big)&=&\\
			\pi(\epsilon b^{-1})\Big(K(\epsilon b+d)c(-(\epsilon b+d)^{-1})-K(\epsilon b)c(-\epsilon b^{-1}))
			\Big)&=&\\
			K( \epsilon b+d)\big(c(\frac{d}{\epsilon b(\epsilon b+d)})-c(\epsilon b^{-1})\big) +K(\epsilon b)c(\epsilon b^{-1})   &=&\\
			\big(K(\epsilon b)+K(d) + \langle c(d),c(-\epsilon b)\rangle \big)\big(c(\frac{d}{\epsilon b(\epsilon b+d)})-c(\epsilon b^{-1})\big)+
			K(\epsilon b)c(\epsilon b^{-1})&=&\\
			K(\epsilon b+d)c(\frac{d}{\epsilon b(\epsilon b+d)})-
			\big(K(d) + \langle c(d),c(-\epsilon b)\rangle \big)c(\epsilon b^{-1}).
		\end{array}$$
		Therefore if $R=K(-\epsilon)\pi(\epsilon b^{-1})A\pi(\epsilon b)c(d)$, 
		$$\begin{array}{rcl}
			K(-\epsilon)\Big(\langle	c(d), c(-\epsilon b^{})\rangle c(\epsilon b^{-1}) +\pi(\epsilon b^{-1})A\pi(\epsilon b)c(d) \Big)&=&\\
			K(-\epsilon)\langle	c(d), c(-\epsilon b^{})\rangle c(\epsilon b^{-1})	+R&=&\\
			K(-\epsilon)\Big(K(\epsilon b+d)c(\frac{d}{\epsilon b(\epsilon b+d)})-K(d)c(\epsilon b^{-1})\Big). 
		\end{array}$$
		
		On the other hand 
		$$\begin{array}{rcl}
			A\pi(b,0)\pi(-\epsilon b^{-1})c(-d^{-1})&=&\\
			A\pi(b,0)\big(c(-\frac{d\epsilon b}{\epsilon bd})-c(-\epsilon b^{-1})\big)&=&\\
			b^{-t}A\big(c(-\frac{\epsilon b(d\epsilon b)}{d})-c(-\epsilon b)\big)&=&\\
			b^{-t}\Big(K(-\frac{\epsilon b(d\epsilon b)}{d})c(\frac{d}{\epsilon b(d\epsilon b)})-K(-\epsilon b)c(\epsilon b^{-1})\Big)&=&\\
			K(-\frac{\epsilon (d\epsilon b)}{d})c(\frac{d}{\epsilon b(d\epsilon b)})-K(-\epsilon)c(\epsilon b^{-1}).
		\end{array}$$
		
		Therefore, what is left is to show that
		$$\begin{array}{rcl}
			K(d)\Big(	K(-\frac{\epsilon (d\epsilon b)}{d})c(\frac{d}{\epsilon b(d\epsilon b)})-K(-\epsilon)c(\epsilon b^{-1})\Big)&=&\\
			K(-\epsilon)\Big(K(\epsilon b+d)c(\frac{d}{\epsilon b(\epsilon b+d)})-K(d)c(\epsilon b^{-1})\Big),
		\end{array}$$
		which is equivalent to show that 
		$$	K(d)	K(\frac{-\epsilon d -b}{d})=	K(-\epsilon)K(\epsilon b+d).$$
		This is can be easily proved considering all the different cases. 
		
		Define  $$f_{\epsilon b}(d)=	K(d)A\pi(b,0)\pi(-\epsilon b^{-1})c(-d^{-1})$$ and 
		$$g_{\epsilon b }(d) =
		K(-\epsilon)\Big(\langle	c(d), c(-\epsilon b^{})\rangle c(\epsilon b^{-1}) +\pi(\epsilon b^{-1})A\pi(\epsilon b)c(d) \Big).$$
		
		Observe that for a given value $b_0\in\R\setminus\{0\}$, 
		the functions $f_{b_0}(d)$ and $g_{b_0}(d)$ are continuous on $d$ and such that $f_{b_0}(0)=0=g_{b_0}(0)$. It has been shown that for every $0\neq d\neq \epsilon b_0$, 
		$f_{b_0}(d) =g_{b_0}(d)$, therefore by continuity 
		$f_{b_0}=g_{b_0}$. 
		
		This concludes the proof for  the equalities,  $$\begin{array}{rcl}\rho(b,0)&=&\tilde{\sigma}\rho(1,b^{-1})\tilde{\sigma}\rho(1,b)\tilde{\sigma}\rho(1,b^{-1})\\&=&\tilde{\sigma}\rho(1,-b^{-1})\tilde{\sigma}\rho(1,-b)\tilde{\sigma}\rho(1,-b^{-1}). \end{array}
	$$
	\end{proof}
	The previous  lemma completes the argument that shows that $$SU(1,1)\xrightarrow{T}\iso(\hi_\C)_o$$ is a homomorphism. 
	The next theorem deals with the continuity and fixed point properties of $T$.  
	\begin{teo}\label{productodedosrepresentaciones}
		The map  $T$   induces an irreducible  (orbitally continuous)
		 representation $\rho$ of $\isocuno$ into $\text{Isom}(\hi_\C)_o$ with $\ell(\rho)=t$ and  $$\Arg(\rho)=\Arg(K_1(1)+K_2(1)). $$

	\end{teo}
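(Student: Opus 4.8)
The plan is to verify, in turn, that $T$ descends to a homomorphism $\rho$ on $\isocuno$, that $\rho$ is orbitally continuous and irreducible, and finally to read the two invariants $\ell(\rho)$ and $\Arg(\rho)$ directly off the block description of $\rho$. First I would check that $T$ factors through the projectivization. By the defining rule $T(-g(\lambda,0))=\rho(\lambda,0)=T(g(\lambda,0))$, so $T(-\mathrm{Id})=\rho(1,0)=\mathrm{Id}$; hence $\{\pm\mathrm{Id}\}\subseteq\ker T$. Since \cref{relacionnomequedaclaro}, together with the verification of relations 3.--6. recorded just before it, shows that $T$ is compatible with the presentation of \cref{relacionesygeneradores}, $T$ is a genuine homomorphism $SU(1,1)\to\iso(\hi_\C)_o$, and it therefore descends to $\rho\colon\isocuno\cong SU(1,1)/\{\pm\mathrm{Id}\}\to\iso(\hi_\C)_o$. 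Because every $T(g)$ is induced by a $Q$-unitary linear map, $\rho$ indeed lands in the identity component $\iso(\hi_\C)_o=PU(1,\infty)$.

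For orbital continuity I would exploit the Bruhat-type decomposition $\isocuno=\pi(P)\sqcup\pi(P)\pi(\sigma)\pi(P)$ of \cref{transtividad}. On the open dense cell the orbit map factors as $(p,q)\mapsto\rho(p)\,\tilde\sigma\,\rho(q)x$; since $\rho|_P$ is orbitally continuous (\cref{bylambdasonhomomorfismos}) and $\tilde\sigma$ is a fixed isometry, the estimate $d(\rho(p_n)y_n,\rho(p_0)y_0)\le d(y_n,y_0)+d(\rho(p_n)y_0,\rho(p_0)y_0)$ yields joint continuity there. Thus $g\mapsto\rho(g)x$ is continuous on a conull open subset of the Lie group $\isocuno$, hence Baire measurable; automatic continuity of measurable homomorphisms into the Polish group $\iso(\hi_\C)_o$ then upgrades this to continuity everywhere. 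The genuinely delicate point is precisely the passage between the two cells at the identity, and it is here that the measurability argument is the clean route, since in Bruhat coordinates the parameters degenerate as $g\to e$ and a direct estimate is awkward.

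Next I would establish irreducibility. The representation is non-elementary: by \cref{unicopuntofijo} the only point of $\hi_\C\cup\partial\hi_\C$ fixed by $\rho(P)$ is $\eta_1$, while $\tilde\sigma=\rho(\sigma)$ interchanges $\eta_1$ and $\eta_2$, so $\rho$ has no global fixed point; and a $\rho$-invariant pair in $\partial\hi_\C$ would have to be fixed pointwise by the connected group $\rho(P)$ (a continuous map from a connected group to $\mathrm{Sym}(2)$ is trivial), forcing both points to equal $\eta_1$, a contradiction. For the absence of a proper invariant $\C$-hyperbolic subspace, let $N$ be the irreducible part, that is, the unique minimal nonzero invariant hyperbolic subspace. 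Iterating the hyperbolic isometry $\rho(\lambda,0)$ on any point of $N$ shows that its attracting and repelling endpoints $\eta_1,\eta_2$ lie in $\partial N$, so the lift of $N$ contains $\eta_1$ and $\eta_2$; invariance under $\rho(1,b)$ together with $\rho(1,b)\eta_2=K(b)\eta_1+\eta_2+c(b)$ and $K(b)\neq0$ (\cref{kessiempredistintodecero}) then force $c(b)\in N$ for every $b$. Hence $N\supseteq\overline{\langle\eta_1,\eta_2,\{c(b)\}_{b\in\R}\rangle}=\C\eta_1\oplus\C\eta_2\oplus L'$, which is the whole space; this mirrors the argument of \cref{larestriccionaPdetermina} and shows $\rho$ is irreducible.

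Finally I would read off the invariants. By construction $\rho(\lambda,0)$ acts on $\C\eta_1\oplus\C\eta_2\oplus L'$ as $\mathrm{diag}(\lambda^t,\lambda^{-t},\pi(\lambda,0))$, so the scaling character is $\chi(\lambda)=\lambda^t$ and therefore $\ell(\rho)=t$ by the definition of the displacement. For the angular invariant, the construction sets $c=c_1\oplus c_2$ on $L=H_1\oplus H_2$ and $\Delta=\Delta_1+\Delta_2$, whence $K(b)=-\tfrac12|c(b)|^2+i\Delta(b)=K_1(b)+K_2(b)$; this is item 3 of \cref{propiedadesdeKyDelta}. Since $\Arg(\rho)=\Arg(K(1))$ by definition of the angular invariant (well defined and conjugation invariant by \cref{cartandimensioninfinita}), we conclude $\Arg(\rho)=\Arg(K_1(1)+K_2(1))$, and this lies in $[0,\tfrac{\pi}{2}]$ because $\operatorname{Re}K(1)=-\tfrac12|c(1)|^2\le0$ and $\Delta(1)=\Delta_1(1)+\Delta_2(1)\ge0$. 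The main obstacle throughout is the orbital continuity across the Bruhat cells; everything else is a direct consequence of the block formulas and the invariance identities already assembled.
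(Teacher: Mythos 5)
Your proposal is correct in substance and coincides with the paper's proof on three of its four points: non-elementarity (via \cref{unicopuntofijo} plus the impossibility of a swapped pair at infinity), irreducibility (the irreducible part must contain $\eta_1,\eta_2$ as endpoints of the axis of the maps $\rho(\lambda,0)$, hence all the $c(b)$, hence all of $\C\eta_1\oplus\C\eta_2\oplus L'$, exactly the fleshed-out version of the paper's one-line argument), and the reading-off of $\ell(\rho)=t$ and $\Arg(\rho)=\Arg(K_1(1)+K_2(1))$ from the block formulas and item 3 of \cref{propiedadesdeKyDelta}. Where you genuinely diverge is the continuity step. You prove continuity only on the open cell $\pi(P)\pi(s)\pi(P)$ and then invoke automatic continuity (Pettis: a Baire-measurable homomorphism from a Baire group into a separable group is continuous); this is legitimate, since $\isocuno\cong PSL_2(\R)$ is a locally compact Polish group, $\iso(\hi_\C)_o$ is Polish in the pointwise topology, and continuity off the closed nowhere dense set $\pi(P)$ gives Baire measurability. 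The paper instead uses a translation trick that makes the direct estimate easy, contrary to your claim that it is ``awkward'': since $s^2=-\mathrm{Id}$ and $\pi(P)$ is closed, the set $\pi(PsPs)$ is an open neighbourhood of the identity, so continuity at the identity reduces to showing $\rho(g_j)x\to\tilde\sigma x$ for sequences $g_j\to\pi(s)$ \emph{inside the big cell}; there the Bruhat parameters do not degenerate (if $g_j=\pi(g(\lambda_j,b_j)\,s\,g(1,d_j))\to\pi(s)$ then $\lambda_j\to1$, $b_j\to0$, $d_j\to0$), and the triangle inequality finishes. So the degeneration you worried about at $e$ is circumvented rather than confronted: the paper's route is elementary and self-contained, while yours trades that for descriptive-set-theoretic machinery.

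One minor gap you should patch: in the non-elementarity step, your parenthetical ``a continuous map from a connected group to $\mathrm{Sym}(2)$ is trivial'' presupposes continuity of the action of $\rho(P)$ on the invariant pair at infinity, which you have not established. This is easily repaired without any continuity: every element of $P$ is a square (indeed $g(\lambda,b)=g(\mu,c)^2$ with $\mu=\sqrt\lambda$ and $c=b/(\mu+\mu^{-1})$), and squares fix an invariant pair pointwise; alternatively, argue as the paper does for the whole group, using that every homomorphism $SU(1,1)\to\Z_2$ is constant. Either way both points of the pair would be fixed by $\rho(P)$, contradicting \cref{unicopuntofijo}.
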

	\begin{proof}  
		Observe that $T$ does not have fixed points in $\hi_\C\cup\partial\hi_C$ because $\sigma$ does not fix $\eta_1$ (see \cref{unicopuntofijo}). If $T$ preserves a geodesic, then it permutes the two limits of it, but this is a contradiction because   every  homomorphism $SU(1,1)\xrightarrow{}\Z_2$ is constant. 
		
		Let $SU(1,1)\xrightarrow{\pi}\text{Isom}(\mathbf{H}^1_\C)_o$ be the projectivization map. The group $\pi(P)$ is closed in $\text{Isom}(\mathbf{H}^1_\C)_o$  and, by \cref{transtividad}, there is a decomposition $$\text{Isom}(\mathbf{H}^1_\C)_o=\pi(PsP)\sqcup \pi(P).$$  Therefore  $\pi(PsPs) $ is an open neighborhood of $Id\in \text{Isom}(\mathbf{H}^1_\C)_o$. Thus, it is enough to show that, if $(g_j)$ is a sequence in $ \pi(Ps P)$  such that $(g_j)\to \pi(s)$, then for every $x\in\hi_C$, $$\rho(g_j)x\to T(s)x=\tilde{\sigma} x.$$
		Observe that every element of $P s P$ can be written as 
		$$g(\lambda,b)sg(1,d)=
		\begin{pmatrix}
			-b&i(\lambda-bd)\\
			i\lambda^{-1}&-\lambda^{-1}d
		\end{pmatrix}.$$
		If $$g_j=\pi(g(\lambda_j,b_j) sg(1,d_j)),$$
		then $b_j\to0$, $\lambda_j\to1$ and $d_j\to0$. 
		Therefore,  for every $x\in\hi_C$, $\rho(\lambda_j,b_j)x\to x$ and  $\rho (1,d_j)x\to x$, hence with a triangle inequality argument it is possible to conclude  that $g_jx\to \sigma x$.  
		
		The irreducible part of $\rho$   has to contain the axis (and its limits) preserved by the maps $\rho(\lambda,0)$, therefore  $\rho$ is irreducible by construction.  
	\end{proof}
\subsection{A new family of representations }
With the results of the previous subsection a continuum of non-equivalent representations will be constructed.  

Given an irreducible  representation $\rho$ denote $K(1)=K(1)_\rho$.
If $p,q\in\R>0$ and $\rho,\tau:\isocuno\rightarrow\iso(\mathbf{H}_\C^\infty)$ are two irreducible representations such that $\ell(\rho)=\ell(\tau)=t$, let $\rho_p$ and $\tau_q$ be two irreducible representations, equivalent to $\rho$ and $\tau$ respectively, such that $|K(1)_{\rho_p}|=p$ and $|K(1)_{\tau_q}|=q$. 
Observe that with the procedure describe in \cref{productodedosrepresentaciones} it is possible to obtain an irreducible representation $\omega$ such that $\ell(\varphi)=t$ and 
$$\Arg(\varphi)=\Arg\left(\frac{pK_\rho(1)}{|K_\rho(1)|}+\frac{qK_\tau(1)}{|K_\tau(1)|}\right).$$ 
Therefore for every $$s\in[\min\{\Arg(\rho),\Arg(\tau)\}, \max\{\Arg(\rho),\Arg(\tau)\}]$$ there is an irreducible  representation $\phi$ such that  $\ell(\phi)=t$ and $\Arg(\phi)=s$. 

Given $u\in[0,1]$, denote $\rho\underset{u}{\wedge}\tau$ the irreducible representation such that $\ell(\rho\underset{u}{\wedge}\tau)=t$ and $$\Arg(\rho\underset{u}{\wedge}\tau)=(1-u)\Arg(\rho)+u\Arg(\tau).$$ 
This representation will be called a  $\textit{horospherical combination} $ of $\rho$ and $\tau$.  

The representation $\rho\underset{u}{\wedge}\tau$ is well defined in the following sense. If $\ell(\rho)=\ell(\tau)$ and $\rho'$ and $\tau'$ are equivalent to $\rho$ and $\tau$ respectively, then $\rho\underset{u}{\wedge}\tau$ is equivalent to $\rho'\underset{u}{\wedge}\tau'$ (see \cref{clasificacionconK}). 
	
	Although in the definition of the horospherical combination,  for simplicity,  the representations were supposed  acting on  the same hyperbolic space,  nothing prevents to define the horospherical combination of two  irreducible representations  with one  possibly having   finite-dimensional target.  		
This could be  the case, by Mostow-Karpelevich theorem or in particular by \cref{linealmenteindependiente}, if $t=1$. 

Using the  families constructed in \cite{monod2018notes} and \cite{monod2014exotic} and the horospherical combination a new family of non-equivalent representations is built.  

 	Recall that for every $0<t<2$ on one hand ,  up to a conjugation, there exists a unique   irreducible  representation $$\iso(\mathbf{H}^1_\C)_o\xrightarrow{\rho_t}\iso(\hi_\C)_o$$ such that  $\ell(\rho_t)=t$ and that preserves a real hyperbolic space. These representations are such that $\Arg({\rho_t})=0$ (see \cref{Deltaesceroparacomplexificaciones}, the comments before it and \cref{clasificacionconK}). 	
	On the other hand, for every $0<t<1$ there exists  an  irreducible representation $$\isocuno\xrightarrow{\tau_t}\iso(\hi_\C)_o$$ such that
 $\Arg(\tau_t)=\frac{t\pi}{2}$ and $\ell(\tau_t)=t$ (see \cref{cartandelapotencidelatautologica} and the comments before it).

	\begin{teo}
	If $0<t<1$ and   $r\in[0,{t\pi}/{2}]$ or if $t=1$ and $r\in[0,{\pi}/{2})$,  there exists a unique, up to a  conjugation,   irreducible representation  $\rho_{t,r}$  such that  $\Arg(\rho_{t,r})=r$ and $\ell(\rho_{t,r})=t$.
	\end{teo}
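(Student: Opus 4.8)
The uniqueness assertion is immediate from \cref{clasificacionconK}: two irreducible representations of $\isocuno$ into $\iso(\hi_\C)_o$ with the same displacement and the same angular invariant are automatically conjugate. Hence the whole problem reduces to \emph{existence}, and the plan is to realize every admissible pair $(t,r)$ as a horospherical combination of two fixed representations of displacement $t$.

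First I would fix the two endpoints of the interpolation. For every $0<t<2$ the complexification $\rho_t$ of the real exotic representation satisfies $\ell(\rho_t)=t$ and $\Arg(\rho_t)=0$ (see \cref{Deltaesceroparacomplexificaciones} and the comments preceding \cref{clasificacionconK}); this supplies the endpoint $r=0$ for both regimes $0<t<1$ and $t=1$. For the upper endpoint when $0<t<1$ I would take the representation $\tau_t$ coming from the tautological-power construction, with $\ell(\tau_t)=t$ and $\Arg(\tau_t)=t\pi/2$ (see \cref{cartandelapotencidelatautologica}). When $t=1$ no such infinite-dimensional $\tau_1$ is available, so instead I would use the tautological inclusion $\isocuno\hookrightarrow\iso(\mathbf{H}^1_\C)_o\subset\iso(\hi_\C)_o$, which has $\ell=1$ and $\Arg=\pi/2$; this is precisely the finite-dimensional-target case sanctioned by the remark following the definition of the horospherical combination, legitimate at $t=1$ because \cref{linealmenteindependiente} may fail there.

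With the endpoints fixed I would invoke the construction of \cref{productodedosrepresentaciones}. Writing $\theta=\Arg(\tau)\in(0,\pi/2]$ for the upper endpoint, the horospherical combination formed from representatives of $\rho$ and $\tau$ normalized so that $|K(1)|=p$ and $|K(1)|=q$ is an irreducible representation of displacement $t$ whose angular invariant equals $\Arg\!\left(p\,e^{i\Arg(\rho)}+q\,e^{i\Arg(\tau)}\right)=\Arg\!\left(p+q\,e^{i\theta}\right)$. As the ratio $p/q$ ranges over $(0,\infty)$ this argument varies continuously and monotonically over the open interval $(0,\theta)$, so by the intermediate value theorem every such value is attained; adjoining the endpoints $r=0$ (realized by $\rho$ itself) and, when $0<t<1$, $r=\theta=t\pi/2$ (realized by $\tau_t$ itself) yields an irreducible representation with $\ell=t$ and $\Arg=r$ for every $r\in[0,t\pi/2]$. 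For $t=1$ the same argument produces every $r\in[0,\pi/2)$.

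The one delicate point, and the step I expect to require the most care, is the exclusion of $r=\pi/2$ at $t=1$. Here I would show that this value lies outside the reach of the new family by a structural argument: if $\Arg(\rho)=\pi/2$ then $\mathrm{Re}\,K(1)=0$, forcing $c(b)=0$ for all $b$, so $\rho$ preserves a copy of $\mathbf{H}^1_\C$ (the proposition preceding \cref{cartandelapotencidelatautologica}) and its irreducible part is the finite-dimensional tautological representation. Thus the pair $(1,\pi/2)$ corresponds only to the already-known tautological inclusion and is rightly omitted, whereas for every $r<\pi/2$ the representation just constructed is genuinely infinite-dimensional; uniqueness for these remaining pairs $(1,r)$ is once more delivered verbatim by \cref{clasificacionconK}.
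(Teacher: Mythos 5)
Your proposal is correct and takes essentially the same route as the paper: uniqueness comes from \cref{clasificacionconK}, and existence from horospherical combinations (\cref{productodedosrepresentaciones}) of the complexified real representation $\rho_t$ with $\Arg=0$ and, for $t<1$, the representation $\tau_t$ with $\Arg(\tau_t)=t\pi/2$, respectively, for $t=1$, the tautological inclusion $id$ with finite-dimensional target, the value $r=\pi/2$ being excluded there exactly because such a representation would preserve a copy of $\mathbf{H}^1_\C$. The only difference is presentational: you run the interpolation through the ratio $p/q$ with an explicit intermediate-value argument, whereas the paper encodes the same scaling in the parameter $u$ of $\rho_t\underset{u}{\wedge}\tau_t$ set up in the discussion preceding the theorem.
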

	\begin{proof}
	
		For $t<1$, consider the family of irreducible representations  $\rho_t\underset{u}{\wedge} \tau_t$. 

For $t=1$, let $id$ be the identity map $\iso(\mathcal{H}_\C^1)_o\rightarrow\iso(\mathcal{H}_\C^1)_o.$
Observe that for every $u\in[0,1)$,  by construction the representation $\rho_t\underset{u}{\wedge} id$ is irreducible and the target is an infinite-dimensional complex hyperbolic space. \end{proof}
By \cref{cartandimensioninfinita}, the  representations listed in the previous theorem are representatives of all the irreducible representations of $\isocuno$ into $\iso(\mathbf{H}_\C^\infty)_o$ with displacement 1. 
\section*{Acknowledgments}
I would like to express my deep gratitude to Nicolas Monod for proposing the question and for the many interesting and very revealing discussions occurred in the spring of 2022 without which this work would not have been possible.

	\bibliographystyle{plain}
	\bibliography{biblio}
	
\end{document}